\documentclass[reqno]{amsart}
\usepackage{amsmath, amsthm, amssymb, amstext}
\usepackage[foot]{amsaddr}
\usepackage[left=2cm,right=2cm,top=1.5cm,bottom=1.5cm,includeheadfoot]{geometry}
\usepackage{hyperref,xcolor}
\hypersetup{
	pdfborder={0 0 0},
	colorlinks,
}

\usepackage{todonotes}
\usepackage{enumitem}
\usepackage{lineno}
\newtheorem{theorem}{Theorem}

\newtheorem{lemma}[theorem]{Lemma}
\newtheorem{proposition}[theorem]{Proposition}

\newcommand{\DD} {\displaystyle}
\newcommand{\la} {\lambda}

\newcommand{\eps}{\varepsilon}

\newcommand{\Om}{\Omega}

\newcommand{\weak}{\rightharpoonup}

\newcommand{\al} {\alpha}

\newcommand{\ga} {\gamma}
\newcommand{\Ga} {\Gamma}

\newcommand{\De} {\Delta}

\newcommand{\noi} {  \noindent}
\newcommand{\na} {\nabla}

\newcommand{\mb} {\mathbb}
\newcommand{\mc} {\mathcal}

\numberwithin{theorem}{section} \numberwithin{equation}{section}

\title[ ]{On the existence results for $m$-Harmonic equation with  critical choquard Nonlinearity }

\author[Abhishek, Sarika Goyal ]
{Abhishek and Sarika Goyal }
\address{Abhishek \newline
	Department of Mathematics, Netaji Subhash University of Technology, Dwarka Sector-3, New Delhi 110078, India}
\email{abhishekhoon1998@gmail.com, abhishek.phd23@nsut.ac.in}

\address{Sarika Goyal \newline
	Department of Mathematics, Netaji Subhash University of Technology, Dwarka Sector-3, New Delhi 110078, India}
\email{sarika@nsut.ac.in, sarika1.iitd@gmail.com }

\subjclass[2020]{35J35; 35B33; 35J60; 35J91}
\keywords{Polyharmonic operator, Choquard Nonlinearity, Hardy-Littlewood-Sobolev critical exponent, Subcritical Perturbation. }
\begin{document}
\maketitle
\begin{abstract}
This article established the existence results for the $m$-harmonic equation involving critical Choquard nonlinearity and subcritical perturbation.
We first explore the minimizers of the $m$-harmonic operator with the critical Choquard equation. Then, using these minimizers, we establish delicate estimates to show the energy below the threshold level, which helps to recover the compactness. Further, we prove the existence of a nontrivial solution for our problem with different kinds of local and nonlocal subcritical perturbations. To the best of our knowledge, this is the first article dealing with the polyharmonic equation and critical Choquard type nonlinearity. The results obtained are even new for $m\geq 2$.
\end{abstract}

\section{Introduction}
\noi In this article, our main goal is to investigate the existence of a nontrivial solution to the following $m$-harmonic equation with critical Choquard nonlinearity and subcritical perturbation  
\begin{equation}\label{e1}
    \left\{
\begin{aligned}
(-\Delta)^m u &= \left(\DD\int_{\Om}\frac{|u(y)|^{2_{\al,m}^{*}}}{|x-y|^{\al}}dy\right)  {|u(x)|^{2_{\al,m}^{*}-2} u(x)}+ \lambda f(u) \quad \;&& \text{in}\; \Om,\\
 \nabla^k u&=0\; \forall \; |k|\leq m-1&& \text{on}\ \partial\Om,
\end{aligned}\right.
\end{equation}
where $\Om$ is bounded domain of $\mb{R}^N$ with a smooth boundary, $N\geq2m+1$, $m\in \mb{N}$, $0<\al<N$, $\la>0$ is a parameter, and $2_{\al, m}^{*}=\frac{2N-\al}{N-2m}$ is the critical exponent in the sense of the Hardy--Littlewood--Sobolev inequality. The $m$-harmonic operator $\Delta^{m}$ is defined as
\[\Delta^{m} u =
\begin{cases}
\Delta^{j}\left(\Delta^{j} u\right) & \text{if } m = 2j,\quad j = 1,2,\ldots, \\[6pt]
\nabla. (\Delta^{\, j-1} \nabla \Delta^{\, j-1} u)
& \text{if } m = 2j - 1,\quad j = 1, 2, \ldots
\end{cases}\]
\noi This is also called a polyharmonic operator. In the last few years, remarkable research efforts have been directed toward the study of biharmonic and polyharmonic equations, which are motivated both by their mathematical richness and their various applications in physics and chemistry, such as phase field models of multiphase systems, micro-electro-mechanical systems, thin film theory, interface dynamics, 
flow in Hele-Shaw cells, nonlinear surface diffusion on solids and the
deformation of a nonlinear elastic beam (see \cite{23ferrero2009solutions, 24myers1998thin}).
 A comprehensive review of the literature exists on biharmonic or polyharmonic problems, see \cite{31colasuonno2012multiple, 29edmunds1990critical, 35grunau1996conjecture} and the references therein.
Specifically, Pucci and Serrin\cite{36pucci1986general} considered the following  polyharmonic equation with critical nonlinearity
\begin{equation}\label{e55}
\left\{
\begin{aligned}
(-\Delta)^m u &= |u|^{2^*_{m}-2}u + \lambda u &&  \text{ in}\  \Om, \\
 {\nabla}^k u&=0 \,  \forall \ |k|\leq m-1 \,&& \text{ on} \ \partial\Om.
\end{aligned}\right.
\end{equation}

\noi They proved that problem \eqref{e55} exhibits only trivial solution $u=0$ if $\la<0$ and
$\Om$ is star-shaped. In \cite{32pucci1990critical}, authors also established the existence of a nontrivial radial solution when $\Om$ is a unit ball. Gazzola \cite{33gazzola1998critical} proved that if $N=2m+1,\ldots,4m-1$,
\eqref{e55} admits a nontrivial solution for all
$\la \in (\la_1^{m} - \la_m^2, \la_1^{m})$. In \cite{39ge2011critical}, Ge and Zhou proved the existence of a solution for a critical problem with the polyharmonic operator. Later in \cite{40dou2012solutions}, Dou, Guo and Wang  extend the results of Pucci and Serin \cite{36pucci1986general}, proving the existence of nontrivial and sign-changing solutions to equation \eqref{e55}.  While in \cite{22shang2014multiple}, Shang and Li discussed the existence of two nontrivial solutions by extracting the $(PS)$ sequence in the Nehari Manifold. Recently, Chen and Wang \cite{38chen2025new} proved a different type of solution using the reduction argument and local Pohozaev identity.\\
In \cite{1310.57262/ade/1366030750}, Lakkis explored the existence result to the following $m$-harmonic equation with critical nonlinearity in critical dimension $n=2m$: 
\begin{equation*} 
\begin{cases}
(-\Delta)^{m} u=g(u) & \text {in}\ \Om,\\
u=\nabla u=..\nabla^{m-1}=0 & \text {on}\ \partial\Om, 
\end{cases}
\end{equation*}
where $g(u)$ is a $C^1$ map that satisfies $\frac{\partial g}{\partial u}> \frac{g(u)}{u}$ for $u \in \mb{R}\setminus\{0\}$. Moreover, the growth of the nonlinearity $g(u)$ is similar to $exp(|t|^{\frac{n}{n-m}})$ at infinity. Moreover, In \cite{14article}, Lam and Lu established the existence of nontrivial solutions for polyharmonic equations with exponential nonlinearity.\\

\noi In recent years, an extensive study has been done on the elliptic equations involving Choquard-type nonlinearity due to their wide range of applications. 
Gao and Yang \cite{37article} 
explored the following Brezis-Nirenberg type critical problem for the nonlinear Choquard equation
\begin{equation}\label{e59}
    -\Delta u = \left(\DD\int_{\Om}\frac{|u(y)|^{2_{\al}^{*}}}{|x-y|^{\al}}dy\right)  {|u(x)|^{2_{\al}^{*}-2} u}+ \la u \text{ in} \, \Om, \quad u=0\;\mbox{on}\;\partial \Om,
\end{equation}
where 
 $\Om$ is an open and bounded subset of $\mb{R}^N$ with Lipschitz boundary, $N\geq3$, $\al \in (0,N)$ and $\la$ is a parameter. They showed the existence and nonexistence of the nontrivial solution to the equation \eqref{e59}. In  \cite{45article}, the authors proved the existence and asymptotic behaviour of nontrivial solutions for the Laplace equation involving the critical Choquard equation with potential. Moreover, Du and Yang in \cite{44Du2019UniquenessAN} investigated the equation \eqref{e59} for $\la>0$  and proved the existence, uniqueness and nondegeneracy of the solution. For more literature on
Choquard equations, we refer \cite{46battaglia2017existence,47gao2020existence,48gao2020existence}, etc.
Recently, Chen and Wang \cite{49chen2026multiple} extended the classical local Brezis-Nirenberg problem to a nonlocal setting via the Hardy-Littlewood-Sobolev convolution term. In contrast to the fractional Laplacian, where nonlocality arises from the operator and the nonlocal effect arises from the nonlinearity itself, which strongly influence the behavior of the Choquard equation.

\vspace{.2cm}
\noi Motivated by the work of Gao and Yang \cite{37article, 1gao2017nonlocal}, in \cite{43rani2022multiple}, Rani and Goyal  
studied the multiplicity results to the following biharmonic equation  critical Choquard nonlinearity with sign-changing weight functions
\begin{equation} \label{e58}
 \begin{cases}
\Delta^2 u = \la f(x) |u|^{q-2} +g(x)\left(\DD\int_{\Om}\frac{g(y)|u(y)|^{2_{\al}^{*}}}{|x-y|^{\al}}dy\right)  {|u(x)|^{2_{\al}^{*}-2} u(x)} \quad \;& \text{in}\; \Om,\\
u, \nabla u=0\   &\text{on}\ \partial\Om.
\end{cases}   
\end{equation}
Then, using the Nehari manifold and fibering map analysis, they proved the existence of the multiplicity result with respect to parameter $\lambda$. Later, Lei and Suo \cite{41lei2025multiple} extend the results of equation \eqref{e58}
 and showed the existence of a $k$-nontrivial solution. Moreover, very few articles are available on the Biharmonic operator with Choquard equations (see\cite{42ji2026normalized,51liu2024existence}). But to the best of our knowledge, no work has been done so far dealing with the polyharmonic operator with critical Choquard nonlinearity. 

 \vspace{.07cm}
\noi With this available literature, one natural question arise: Can one exblish the existence results for polyharmonic operator with critical Choquard nonlinearity? If yes, then what will be the minimizer for $m$-harmonic operator with this nonlinearity? We answer these questions positively in this article.  With the help of these minimizers, we showed the existence of a nontrivial solution for the polyharmonic critical Choquard equation \eqref{e1} with different kinds of subcritical perturbations on the bounded domain $\Om \subset \mb{R}^N$.

\vspace{.2cm}
\noi Now, we state the main results of the article with different perturbation:

\vspace{.07cm}
\noi For the subcritical term $f(x)=|u|^{q-2}u, 1\leq q<2^{*}_m-1$ in \eqref{e1}, problem can be rewritten as\\
\begin{equation}\label{e5}
\left \{
\begin{aligned}
(-\Delta)^m u &= \left(\DD\int_{\Om}\frac{|u(y)|^{2_{\al,m}^{*}}}{|x-y|^{\al}}dy\right)  {|u(x)|^{2_{\al,m}^{*}-2} u(x)}+ \la |u|^{q-2}u \quad \,&& \text{in}\; \Om,\\
\nabla^k u &=0 \ \forall \  |k|\leq m-1 \,&& \text{on}\ \partial\Om.
\end{aligned}\right.
\end{equation}
Then we obtain the following existence result for different exponents of $q$.
\begin{theorem}\label{t1}
If $1<q<2^{*}_m-1$, $N\geq2m+1$ and $0<\al<N$. Then, problem \eqref{e5} has at least one nontrivial solution provided that either
\begin{enumerate}
\item  $N>\max \left\{\min \left\{\frac{2m(q+3)}{q+1}, 2m+\frac{\al}{q+1}\right\}, \frac{2m(q+1)}{q}\right\}$ and $\la>0$, or
\item  $N \leq \max \left\{\min \left\{\frac{2m(q+3)}{q+1}, 2m+\frac{\al}{q+1}\right\}, \frac{2m(q+1)}{q}\right\}$ and $\la$ is sufficiently large.
\end{enumerate}
\end{theorem}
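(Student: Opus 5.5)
We argue variationally on $H:=H_0^m(\Om)$ with norm $\|u\|^2=\int_\Om |\nabla^m u|^2$, where $\nabla^m u=\Delta^j u$ if $m=2j$ and $\nabla\Delta^{j-1}u$ if $m=2j-1$. The energy functional is
\[
I_\la(u)=\frac12\|u\|^2-\frac{1}{2\cdot 2^*_{\al,m}}\int_\Om\int_\Om\frac{|u(x)|^{2^*_{\al,m}}|u(y)|^{2^*_{\al,m}}}{|x-y|^{\al}}\,dx\,dy-\frac{\la}{q}\int_\Om|u|^q .
\]
The plan uses the mountain pass theorem without the Palais--Smale condition, followed by a compactness argument below a critical level. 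Here $q>2$ is needed for the usual mountain pass geometry. The restriction $N>\frac{2m(q+1)}{q}$, i.e.\ $q>\frac{2m}{N-2m}$, indicates that the relevant regime is the superlinear one, so I will treat $q$ in the range where $u\mapsto\int|u|^q$ is $o(\|u\|^2)$ near $0$. By the Hardy--Littlewood--Sobolev and Sobolev inequalities, the Choquard term is bounded by $C\|u\|^{2\cdot 2^*_{\al,m}}$ and the perturbation by $C\|u\|^q$. This gives $I_\la\ge\rho>0$ on a small sphere, while $I_\la(tu)\to-\infty$ as $t\to\infty$. Hence a $(PS)_c$ sequence exists at the mountain pass level $c$.

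\emph{Compactness.} Let $S_{H,L}^{m}$ be the best constant
\[
\inf_{u\ne0}\frac{\|u\|^2}{\left(\int\int\frac{|u(x)|^{2^*_{\al,m}}|u(y)|^{2^*_{\al,m}}}{|x-y|^\al}\right)^{1/2^*_{\al,m}}},
\]
attained on $\mb R^N$ by $U_\eps(x)=C\left(\frac{\eps}{\eps^2+|x|^2}\right)^{\frac{N-2m}{2}}$; this is the minimizer the abstract refers to, with $S^m_{H,L}=S_m/(C(N,\al)^{1/2^*_{\al,m}})$. I will show that every $(PS)_c$ sequence with
\[
c<\frac{N+2m-\al}{2(2N-\al)}\,(S_{H,L}^m)^{\frac{2N-\al}{N+2m-\al}}
\]
has a convergent subsequence. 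The argument runs as follows. First, $(PS)$ sequences are bounded, using $I-\frac1q I'$ when $q>2$. Extract a weak limit $u$. Then apply Brezis--Lieb for the Choquard term (the Gao--Yang version) to $w_n=u_n-u$, giving $\|w_n\|^2-B(w_n)\to0$, where $B$ denotes the double integral. The HLS/Sobolev bound yields either $\|w_n\|\to0$ or $\liminf\|w_n\|^2\ge (S_{H,L}^m)^{\frac{2N-\al}{N+2m-\al}}$. In the second case the energy of $u$ is nonnegative, so $c\ge$ threshold, a contradiction. Thus the weak limit is a critical point, and it is nontrivial because $c\ge\rho>0$.

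\emph{Energy estimate (main obstacle).} Take $\eta\in C_c^\infty(B_{2\delta})$ with $\eta=1$ on $B_\delta\subset\Om$, and set $u_\eps=\eta U_\eps$. The estimates needed for the polyharmonic cutoff are
\[
\|u_\eps\|^2\le C(N,\al)^{\frac{N-2m}{2N-\al}\cdot\frac{N}{N-2m}\cdots}S_{H,L}^{m\,\frac{2N-\al}{N+2m-\al}}+O(\eps^{N-2m}),
\]
with the lower bound on $B(u_\eps)$ carrying an error $O(\eps^{N-\frac{\al}{2}})$ or $O(\eps^{N-2m})$, obtained by integrating only over $B_\delta\times B_\delta$. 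In addition, $\int|u_\eps|^q\ge C\eps^{N-\frac{(N-2m)q}{2}}$ once $q>\frac{N}{N-2m}$. The derivatives of $\eta U_\eps$ up to order $m$ produce lower order terms; bounding them by $O(\eps^{N-2m})$ is the delicate computation. Maximizing $t\mapsto I_\la(tu_\eps)$ gives $t_\eps$ bounded above and below, and then
\[
\sup_t I_\la(tu_\eps)\le \text{threshold}+O(\eps^{N-2m})+O(\eps^{\min\{\cdots\}})-C\la\,\eps^{N-\frac{(N-2m)q}{2}} .
\]
Case (1): the dimension condition is exactly that the negative $q$-term dominates all error exponents, namely $N-\frac{(N-2m)q}{2}<\min\{N-2m,\ \text{the HLS error exponent}\}$. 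This yields the two branches $\frac{2m(q+3)}{q+1}$ and $2m+\frac{\al}{q+1}$ after the exponent bookkeeping, together with $q>\frac{N}{N-2m}$ giving $\frac{2m(q+1)}{q}$. Then a small $\eps$ works for every $\la>0$. Case (2): fix $\eps$, note that $t_{\eps,\la}\to0$ as $\la\to\infty$, and hence $\sup_t I_\la(tu_\eps)\to0$, which lies below the threshold for large $\la$. I expect the hardest part to be the sharp cutoff estimates for $\|\eta U_\eps\|^2$ and for the Choquard term in the $m$-harmonic setting, and matching their exponents to the stated dimension thresholds.
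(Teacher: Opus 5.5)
Your architecture matches the paper's: mountain pass without $(PS)$, boundedness plus Brezis--Lieb splitting to get compactness below $\frac{N+2m-\al}{4N-2\al}S_{H,L}^{\frac{2N-\al}{N+2m-\al}}$, the truncated extremal $u_\eps$ in Case (1), and $t_\la\to0$ in Case (2). The gap is an off-by-one in the exponent of the perturbation, and it breaks two of your claims. You read $\la|u|^{q-2}u$ in \eqref{e5} literally and work with $\frac{\la}{q}\int_\Om|u|^q$. The thresholds in the statement, however, belong to $\la|u|^{q-1}u$, i.e.\ to $\frac{\la}{q+1}\int_\Om|u|^{q+1}$, which is the functional the paper actually uses in Section 3.

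With your convention two things go wrong.
\begin{itemize}
\item You are forced to discard $1<q\le2$, and your justification does not hold. The condition $N>\frac{2m(q+1)}{q}$ only says $q>\frac{2m}{N-2m}$, which allows $q\le 2$ as soon as $N>3m$; the hypotheses of Case (2) also allow $q\le2$. For $q<2$ your perturbation is sublinear: $\mc I_\la(tu)<0$ for every $u\neq0$ and small $t>0$. Then both the mountain pass geometry at $0$ and the bound from $\mc I_\la(u_n)-\frac1q\langle\mc I_\la'(u_n),u_n\rangle$ fail, so that range is simply not covered.
\item The exponent bookkeeping does not produce the stated thresholds. From $\int_\Om|u_\eps|^q\ge C\eps^{N-\frac{(N-2m)q}{2}}$ you get the following. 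Beating $\eps^{N-2m}$ requires $N>\frac{2m(q+2)}{q}$. Beating $\eps^{N-\frac{\al}{2}}$ requires $N>2m+\frac{\al}{q}$. The condition $q>\frac{N}{N-2m}$ means $N>\frac{2mq}{q-1}$. These are not $\frac{2m(q+3)}{q+1}$, $2m+\frac{\al}{q+1}$ and $\frac{2m(q+1)}{q}$, as you assert.
\end{itemize}

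The repair is to replace $q$ by $q+1$ throughout, as the paper does.
\begin{itemize}
\item Then $2<q+1<2^*_m$ on the whole range $1<q<2^*_m-1$.
\item Boundedness comes from $\mc I_\la(u_n)-\frac{1}{q+1}\langle\mc I_\la'(u_n),u_n\rangle\ge\left(\frac12-\frac{1}{q+1}\right)\|u_n\|^2$, and $\mc I_\la(u_0)\ge0$ still holds.
\item The lower bound is $\int_\Om|u_\eps|^{q+1}\ge C\eps^{N-\frac{(N-2m)(q+1)}{2}}$. Here $(N-2m)(q+1)>N$ is exactly $N>\frac{2m(q+1)}{q}$.
\item Comparing the exponent $N-\frac{(N-2m)(q+1)}{2}$ with $N-2m$ and with $N-\frac{\al}{2}$ gives exactly $N>\frac{2m(q+3)}{q+1}$ and $N>2m+\frac{\al}{q+1}$.
\end{itemize}

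One further caveat. Making the $(q+1)$-term beat \emph{both} error exponents, as you describe, requires $N$ to exceed both numbers, i.e.\ the $\max$ rather than the $\min$ that appears in the statement. The final inequality in the paper's proof of Lemma~\ref{l5} has exactly the same requirement. So this is a looseness in the statement, shared by the paper, rather than a defect of your route relative to it. Still, your claim that the dimension condition is ``exactly'' this comparison is not accurate.
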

\begin{theorem} \label{t4}
If $q=1$, $N\geq2m+1$ and $0<\al<N$. Then, the following results hold:
\begin{enumerate}
\item  If $N\geq4m$, then \eqref{e5} has a nontrivial solution for $\la>0$, provided $\la$ is not an eigenvalue of $(-\Delta)^m$ with homogeneous Dirichlet boundary condition.\\
\item If $2m+1 \leq N \leq 4m-1$, then there exist $\la^*$ such that \eqref{e5} has a nontrivial solution for $\la>\la^*$, provided $\la$ is not an eigenvalue of 
$(-\Delta)^m$ with homogeneous Dirichlet boundary condition.\\ 
\end{enumerate}
\end{theorem}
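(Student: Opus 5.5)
Theorem \ref{t4} concerns the case $q=1$. Here the perturbation is linear, $\la u$, so \eqref{e5} is a Brezis--Nirenberg type problem for $(-\Delta)^m$ with critical Choquard term. We work in $H:=H_0^m(\Om)$ with norm $\|u\|^2=\int_\Om |\nabla^m u|^2$, where $\nabla^m u$ is $\De^j u$ or $\nabla\De^{j-1}u$. The energy functional is
\[
I_\la(u)=\frac12\|u\|^2-\frac{\la}{2}\int_\Om u^2-\frac{1}{2\cdot 2^*_{\al,m}}\int_\Om\int_\Om\frac{|u(x)|^{2^*_{\al,m}}|u(y)|^{2^*_{\al,m}}}{|x-y|^\al}\,dx\,dy .
\]
Let $0<\la_1<\la_2\le\dots$ be the Dirichlet eigenvalues of $(-\Delta)^m$ on $\Om$. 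Suppose $\la_k<\la<\la_{k+1}$, which is possible because $\la$ is not an eigenvalue. Set $H^-=\mathrm{span}\{e_1,\dots,e_k\}$ and $H^+=(H^-)^\perp$. If $\la<\la_1$, we use the mountain pass theorem instead. Otherwise we use the linking theorem of Rabinowitz: $I_\la\ge \beta>0$ on $\partial B_\rho\cap H^+$, and $I_\la\le0$ on $\partial Q$, where $Q=(\overline{B}_R\cap H^-)\oplus\{tu_\eps:0\le t\le R\}$. The geometry follows from $\|u\|^2-\la|u|_2^2\ge(1-\la/\la_{k+1})\|u\|^2$ on $H^+$, from $\|u\|^2-\la|u|_2^2\le0$ on $H^-$, and from the HLS inequality, which makes the Choquard term of order $\|u\|^{2\cdot2^*_{\al,m}}$.

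\textbf{Compactness.} Let $S_{H,L}$ be the best constant of the minimization problem for $\|u\|^2/(\iint\cdots)^{1/2^*_{\al,m}}$. We use the earlier characterization of its minimizers: $S_{H,L}$ is attained by the Aubin--Talenti type functions $U_\eps(x)=C\eps^{(N-2m)/2}(\eps^2+|x|^2)^{-(N-2m)/2}$ of the $m$-Sobolev inequality, and $S_{H,L}=S_m\,(C(N,\al))^{-1/2^*_{\al,m}}$. We then show that $I_\la$ satisfies $(PS)_c$ for
\[
c<c^*:=\frac{N+2m-\al}{2(2N-\al)}\,S_{H,L}^{\frac{2N-\al}{N+2m-\al}} .
\]
The argument runs as follows. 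A $(PS)$ sequence is bounded, since $I_\la(u_n)-\frac{1}{2\cdot 2^*_{\al,m}}\langle I'_\la(u_n),u_n\rangle$ controls $\|u_n\|^2-\la|u_n|_2^2$; combined with the equation and $\la\notin\sigma$, this gives boundedness by the usual contradiction argument on normalized sequences. We then extract a weak limit and apply the Brezis--Lieb lemma for the nonlocal term. If the nonlocal term of $u_n-u$ does not vanish, its norm satisfies $\|u_n-u\|^2\ge S_{H,L}^{(2N-\al)/(N+2m-\al)}$. Together with $I_\la(u)\ge0$ from $I_\la'(u)=0$, this forces $c\ge c^*$, a contradiction.

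\textbf{Energy estimate (main obstacle).} Let $u_\eps=\psi U_\eps$ with a cutoff $\psi$, arranged so that the derivatives up to order $m-1$ vanish on $\partial\Om$. We need
\[
\sup_{H^-\oplus\mathbb{R}^+u_\eps} I_\la<c^* .
\]
The standard estimates give
\[
\|u_\eps\|^2=S_m^{N/2m}+O(\eps^{N-2m}),\qquad \iint\ge C^{N/2m}S_{H,L}^{\cdots}-O(\eps^{N-\al/2}),
\]
and
\[
|u_\eps|_2^2\ge \begin{cases} C\eps^{2m}, & N>4m,\\ C\eps^{2m}|\ln\eps|, & N=4m,\\ C\eps^{N-2m}, & N<4m.\end{cases}
\]
For $N\ge4m$ the gain $\la|u_\eps|_2^2$ dominates the error $O(\eps^{N-2m})$ for every $\la>0$. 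We still must handle the interaction with $H^-$. Since the eigenfunctions are smooth, we replace $u_\eps$ by its projection onto $H^+$ (or use the decomposition $w=v+tu_\eps$). The cross terms $\langle v,u_\eps\rangle$ and $\int v u_\eps$ are $O(\eps^{(N-2m)/2})\|v\|$. We absorb them using $\|v\|^2-\la|v|_2^2\le-(\la/\la_k-1)\|v\|^2$ and Young's inequality, and the absorption costs only $O(\eps^{N-2m})$. For $2m+1\le N\le4m-1$ the gain and the error are of the same order $\eps^{N-2m}$. So we take $\la$ large, $\la>\la^*$, where $\la^*$ is fixed by the ratio of the constants, to make the gain win. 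The delicate points are the exact expansion constants and the absorption of the cross terms. With the estimate established, the linking theorem yields a critical point at a level $c\in[\beta,c^*)$, hence a nontrivial solution.
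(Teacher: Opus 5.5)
Your architecture matches the paper's: mountain pass for $0<\la<\la_1$, linking over $H^-\oplus\mb{R}^+u_\eps$ (the paper's $W_j\oplus\mb{R}^+u_\eps$) otherwise, compactness below $\frac{N+2m-\al}{4N-2\al}S_{H,L}^{\frac{2N-\al}{N+2m-\al}}$, and the three regimes for $|u_\eps|_2^2$. There is, however, a genuine gap in the linking energy estimate. You only control the quadratic cross terms $\langle v,u_\eps\rangle$ and $\int_\Om vu_\eps$. The Choquard term of $w=v+tu_\eps$ is not additive, so bounding $\sup_{H^-\oplus\mb{R}^+u_\eps}\mc I_\la$ requires a lower bound for $\|v+tu_\eps\|_{NL}$ whose error carries a small factor.

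The naive bound does not work. The triangle inequality gives $\|v+tu_\eps\|_{NL}\ge t\|u_\eps\|_{NL}-C|v|_2$, which loses a term linear in $|v|_2$ with an $O(1)$ coefficient. Absorbing that against $-(\la/\la_k-1)\|v\|^2$ by Young costs $O(1)$, which swamps the gain $\la|u_\eps|_2^2=o(1)$. Projecting $u_\eps$ onto $H^+$ does not help either, since it only removes the quadratic cross terms.

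The missing idea, which is the core of the paper's linking step, is convexity. By Lemma~\ref{l17}, $\|\cdot\|_{NL}$ is a norm, so $u\mapsto\|u\|_{NL}^{2\cdot 2^*_{\al,m}}$ is convex and $C^1$. Since $u_\eps\ge0$, this gives
\[
\|v+tu_\eps\|_{NL}^{2\cdot 2^*_{\al,m}}\ \ge\ t^{2\cdot 2^*_{\al,m}}\|u_\eps\|_{NL}^{2\cdot 2^*_{\al,m}}-2\cdot 2^*_{\al,m}\,t^{2\cdot 2^*_{\al,m}-1}\,|v|_\infty\int_\Om\int_\Om\frac{u_\eps(x)^{2^*_{\al,m}-1}\,u_\eps(y)^{2^*_{\al,m}}}{|x-y|^\al}\,dx\,dy .
\]
The double integral is $O(\eps^{\frac{N-2m}{2}})$ by scaling and Hardy--Littlewood--Sobolev (see \eqref{e40}). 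Also $|v|_\infty\le C_k|v|_2$ on the finite-dimensional $H^-$, because the eigenfunctions are bounded. This is \eqref{e23}. It bounds $t$ and turns the nonlocal interaction into an $O(\eps^{\frac{N-2m}{2}})|v|_2$ error, which is exactly what lets your Young absorption cost only $O(\eps^{N-2m})$.

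Two smaller points.

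First, your separate boundedness argument for $(PS)$ sequences is genuinely needed and improves on the paper. The paper's Lemma~\ref{l2} uses $\mc I_\la(u_n)-\frac{1}{q+1}\langle \mc I_\la'(u_n),u_n\rangle$, which gives nothing at $q=1$. The cleanest version of your argument goes as follows. With $w_n=u_n/\|u_n\|$, the combination $\mc I_\la(u_n)-\frac12\langle \mc I_\la'(u_n),u_n\rangle$ gives $\|w_n\|_{NL}\to0$. Fatou then gives $w_n\to0$ in $L^2(\Om)$, which contradicts $1-\la|w_n|_2^2\le o(1)$. The hypothesis $\la\notin\sigma$ is not needed for this.

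Second, in the linking range with $2m+1\le N\le 4m-1$, your claim that $\la^*$ is fixed by the ratio of the bubble constants does not follow. The absorption cost is of order $\frac{C_k^2}{\la/\la_k-1}\,\eps^{N-2m}$, the same order as the gain $\la C_1\eps^{N-2m}$. Its constant depends on $k$ and blows up as $\la\downarrow\la_k$, so this dependence has to be tracked explicitly. The paper's Lemma~\ref{l12} also passes over this point.
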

\noi The existence of solutions depends on the dimension $N$ and parameter $\la>0$ with respect to the spectrum of the Dirichlet Laplacian.
If $N\geq2m+1$ and $\la<\la^{m}_1$, then the problem admits a nontrivial weak solution. The proof is variational and relies on a Mountain Pass construction, the corresponding minimax level lies below the critical Hardy–Littlewood-Sobolev constant. If $N\geq 2m+1$ and $\la>\la^{m}_1$ where $\la$ is not an eigenvalue, the functional is indefinite. In this case, the Linking argument based on the spectral decomposition of $\la$ yields a nontrivial solution. In dimension $N\in [2m+1, 4m)$, there exists $\la^*>0$ such that solution exist for all $\la>\la^*$.\\ 

\noi Next, we study the problem \eqref{e1} with a subcritical nonlocal term $f(u)= \left(\DD\int_{\Om} \frac{|u|^q}{|x-y|^\al} d y\right)|u|^{q-2} u$, that is
\begin{equation}\label{e6}
\left\{
\begin{aligned}
(-\Delta)^m u&=\left(\DD\int_{\Om} \frac{|u|^{2_{\al, m}^*} }{|x-y|^\al} d y\right)|u|^{2_{\al, m}^*-2} u+\la\left(\DD\int_{\Om} \frac{|u|^q}{|x-y|^\al} d y\right)|u|^{q-2} u \,&& \text{ in } \Om, \\
\nabla^k u&=0\ \forall \ |k|\leq m-1 \,&&  \text { on}\   \partial\Om. 
\end{aligned}\right.
\end{equation}
For this case, we establish the following existence result:
\begin{theorem}\label{t2}
If $1<q<2_{\al, m}^*$, $N\geq2m+1$ and $0<\al<N$. Then, problem \eqref{e6} has at least one nontrivial solution provided that either \\
\item (1) $N>\frac{2m(q+1)-\al}{q-1}$ and $\la>0$,
\item (2) $N \leq \frac{2m(q+1)-\al}{q-1}$ and $\la$ is sufficiently large.
\end{theorem}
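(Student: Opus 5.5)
We argue variationally in $H:=H_0^m(\Om)$ with norm $\|u\|^2=\int_\Om |\nabla^m u|^2$ (with $\nabla^m=\Delta^{m/2}$ or $\nabla\Delta^{(m-1)/2}$), using the functional
\[
I_\la(u)=\frac12\|u\|^2-\frac{1}{2\cdot 2^*_{\al,m}}\int_\Om\int_\Om\frac{|u(x)|^{2^*_{\al,m}}|u(y)|^{2^*_{\al,m}}}{|x-y|^\al}\,dx\,dy-\frac{\la}{2q}\int_\Om\int_\Om\frac{|u(x)|^{q}|u(y)|^{q}}{|x-y|^\al}\,dx\,dy .
\]
Since $1<q<2^*_{\al,m}$, the Hardy--Littlewood--Sobolev inequality gives $\bigl|\int\int |u|^q|u|^q|x-y|^{-\al}\bigr|\le C|u|_{2Nq/(2N-\al)}^{2q}$, and $2Nq/(2N-\al)<2^*_m$, so the perturbation is compact on $H$. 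The first step is the mountain pass geometry: $I_\la(u)\ge \frac12\|u\|^2-C\|u\|^{2\cdot2^*_{\al,m}}-C\la\|u\|^{2q}$ with $2q>2$, so $I_\la\ge\rho>0$ on a small sphere, and $I_\la(tu)\to-\infty$. We obtain a $(PS)_c$ sequence at the minimax level $c$.

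The second step is local compactness. Let $S_{H,L}$ denote the best constant of $\|u\|^2\ge S_{H,L}\bigl(\int\int |u|^{2^*_{\al,m}}|u|^{2^*_{\al,m}}|x-y|^{-\al}\bigr)^{1/2^*_{\al,m}}$, which is attained in $\mb{R}^N$ by the family $U_\eps(x)=C\eps^{-(N-2m)/2}U(x/\eps)$, $U(x)=(1+|x|^2)^{-(N-2m)/2}$, the earlier established minimizers. We show that $(PS)_c$ sequences are bounded (combining $I_\la-\frac{1}{2q}\langle I'_\la,\cdot\rangle$, using $q<2^*_{\al,m}$) and that $(PS)_c$ holds for $c<\frac{N+2m-\al}{2(2N-\al)}S_{H,L}^{\frac{2N-\al}{N+2m-\al}}$. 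This follows the standard Brezis--Lieb argument for the Choquard term, namely the nonlocal Brezis--Lieb lemma of Gao--Yang: the compact perturbation passes to the limit, and the remainder $v_n=u_n-u$ satisfies $\|v_n\|^2-B(v_n)\to0$ with $B(v_n)\le S_{H,L}^{-2^*}\|v_n\|^{2\cdot2^*}$, which forces either $v_n\to0$ or the energy to be at least the threshold.

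The main obstacle is the estimate $\sup_{t\ge0}I_\la(tu_\eps)<$ threshold. We take $u_\eps=\psi U_\eps$ with a cutoff $\psi$, $\psi\equiv1$ near $0\in\Om$. The polyharmonic estimates give $\|u_\eps\|^2\le S_{H,L}^{\frac{2N-\al}{N+2m-\al}}+O(\eps^{N-2m})$ and a Choquard term $\ge S_{H,L}^{\frac{2N-\al}{N+2m-\al}}-O(\eps^{N-\al/2})$, the latter obtained by restricting the double integral to the ball. These require derivatives of $\psi U_\eps$ up to order $m$, with cross terms supported in an annulus where $|\nabla^j U_\eps|=O(\eps^{(N-2m)/2})$. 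For the perturbation, restrict to $B_\delta\times B_\delta$ and use $|x-y|\le 2\delta$ to obtain the lower bound
\[
\int\int\frac{|u_\eps|^q|u_\eps|^q}{|x-y|^\al}\ge C\Bigl(\int_{B_\delta}|u_\eps|^q\Bigr)^2\ge C\eps^{2N-(N-2m)q},
\]
valid when $q>N/(N-2m)$, which we arrange or else handle with the logarithmic/other regime. The maximizer $t_\eps$ stays bounded away from $0$ and $\infty$, so
\[
\sup_t I_\la(tu_\eps)\le \text{threshold}+O(\eps^{N-2m})-C\la\,\eps^{2N-(N-2m)q}.
\]
This is below the threshold for small $\eps$ iff $2N-(N-2m)q<N-2m$, that is, $N(q-1)>2m(q+1)$, i.e. $N>\frac{2m(q+1)}{q-1}$. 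The stated threshold $\frac{2m(q+1)-\al}{q-1}$ suggests we should instead keep the kernel and use $|x-y|^{-\al}\ge$ the scaled kernel inside $B_\eps$, giving the order $\eps^{2N-\al-(N-2m)q}$; then the condition $2N-\al-(N-2m)q<N-2m$ is exactly $N>\frac{2m(q+1)-\al}{q-1}$, case (1).

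In case (2) we instead fix $\eps$ and use $\la$ large: $\sup_t I_\la(tu_\eps)\le \max_t\bigl(\frac{t^2}{2}\|u_\eps\|^2-\frac{\la t^{2q}}{2q}D\bigr)\to0$ as $\la\to\infty$, since $t_\la\to0$. Hence the level is below the threshold. In both cases the mountain pass theorem with $(PS)_c$ yields a critical point at level $c\ge\rho>0$, hence a nontrivial solution.
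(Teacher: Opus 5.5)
Your proposal is correct and follows essentially the paper's route: mountain pass geometry, boundedness of $(PS)_c$ sequences and compactness below $\frac{N+2m-\al}{4N-2\al}S_{H,L}^{\frac{2N-\al}{N+2m-\al}}$ via the nonlocal Brezis--Lieb lemma, the test functions $u_\eps$ with a lower bound of order $\eps^{2N-\al-(N-2m)q}$ for the subcritical Choquard term, and $\la\to\infty$ with $\eps$ fixed for case (2). Your restriction to $B_\eps\times B_\eps$ (your final version, not the discarded $B_\delta$ attempt) gives that lower bound more directly than the paper's $B_1-2B_2-B_3$ splitting. Two points are left implicit: the compactness step uses the standard fact $I_\la(u_0)\ge 0$ for the weak limit, and the $O(\eps^{N-\al/2})$ error you dropped from the Choquard estimate must also be dominated, which holds because $(N-2m)q>N-\al/2$ under your hypotheses.
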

\noi While giving the proofs of the Theorems \ref{t2}, the main challenge originates from the presence of two nonlocal nonlinearities. One of which dominates the Hardy--Littlewood--Sobolev critical growth. Such  double nonlocal structure not only demolishes the compactness at the critical level but also, it complicates the estimation of the corresponding minimax energy. To address these difficulties, we first establish the compactness of the $(PS)$ sequence below an appropriate critical threshold, and then we obtain the estimates for the functional using suitable extremal functions. These estimates ensure that the mountain pass level lies below the critical level, which helps to obtain the existence results.\\
\noi Similarly, we study the problem with the Sobolev critical exponent and subcritical nonlocal perturbation.
\begin{theorem}\label{t3}
If $1<q<{2_{\al, m}^*}-1$, $N\geq2m+1$ and $0<\al<N$. Then, the problem 
\begin{equation*}
\left\{
\begin{aligned}
(-\Delta)^m u&=\la\left(\DD\int_{\Om} \frac{|u|^q}{|x-y|^\al} d y\right)|u|^{q-2} u+|u|^{2_{\al, m}^*-2} u \,&& \text{ in}\ \Om, \\
\nabla^k u&=0\  \forall \ |k|\leq m-1 \,&& \text { on}\ \partial\Om, 
\end{aligned}\right.
\end{equation*}
has at least one nontrivial solution, provided that either
\item (1) $N>\frac{2m(q+1)-\al}{q-1}$ and $\la>0$,
\item (2) $N \leq \frac{2m(q+1)-\al}{q-1}$ and $\la$ is sufficiently large.
\end{theorem}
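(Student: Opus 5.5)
The key observation is that, as the statement is written, the local term is $|u|^{2_{\al,m}^*-2}u$ with $p:=2_{\al,m}^*=\frac{2N-\al}{N-2m}$. Since $\al>0$, we have $p<2^*_m=\frac{2N}{N-2m}$. Moreover $p>1$ because $\al<N<N+2m$. So this term is \emph{Sobolev-subcritical}, and the critical Choquard analysis (threshold levels, extremal estimates) is not needed. I would work in $H:=H_0^m(\Om)$ with norm $\|u\|=\|(-\Delta)^{m/2}u\|_{L^2}$, interpreted as $\|\nabla\Delta^{(m-1)/2}u\|_{L^2}$ for odd $m$. The energy functional is
\begin{equation*}
I(u)=\frac12\|u\|^2-\frac{\la}{2q}\int_\Om\int_\Om\frac{|u(x)|^q|u(y)|^q}{|x-y|^\al}\,dx\,dy-\frac1p\int_\Om|u|^p\,dx .
\end{equation*}
By the Hardy--Littlewood--Sobolev inequality, the double integral is bounded by $C\,\||u|^q\|_{L^{2N/(2N-\al)}}^2$. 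Since $q<p-1<p$, we have $\frac{2Nq}{2N-\al}<\frac{2Nq}{2N-\al}\cdot\frac{p}{q}=2^*_m$. Hence, by boundedness of $\Om$ and the Sobolev embedding, the double integral is at most $C\|u\|^{2q}$. Both nonlinear terms therefore factor through the \emph{compact} embeddings $H\hookrightarrow L^{r}(\Om)$ for $r<2^*_m$. This gives $I\in C^1(H,\mathbb R)$, with the Choquard part and the $L^p$ part having weak-to-strong continuous derivatives. Critical points of $I$ are weak solutions.

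\emph{Main case $p>2$, i.e.\ $\al<4m$.} I would apply the mountain pass theorem, for every $\la>0$. This case covers both alternatives (1) and (2) at once.

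First, the geometry. We have $I(u)\ge \frac12\|u\|^2-C\la\|u\|^{2q}-C\|u\|^p$ with $2q>2$ and $p>2$. Hence $I\ge\beta>0$ on a small sphere $\|u\|=\rho$. Fixing $v\ne0$, $I(tv)\to-\infty$ as $t\to\infty$.

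Second, the Palais--Smale condition. Put $\mu=\min\{2q,p\}>2$. For a $(PS)_c$ sequence,
\begin{equation*}
c+1+o(1)\|u_n\|\ge I(u_n)-\frac1\mu\langle I'(u_n),u_n\rangle\ge\Big(\frac12-\frac1\mu\Big)\|u_n\|^2,
\end{equation*}
so $(u_n)$ is bounded. Up to a subsequence, $u_n\weak u$ in $H$ and $u_n\to u$ in every $L^r$ with $r<2^*_m$. The derivatives of the two nonlinear terms then converge strongly in $H^{-1}$; for the Choquard term one uses HLS and the strong convergence of $|u_n|^q$ in $L^{2N/(2N-\al)}$. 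Testing $I'(u_n)-I'(u)$ against $u_n-u$ gives $\|u_n-u\|\to0$.

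The mountain pass level $c\ge\beta>0$ then yields a critical point $u$ with $I(u)=c>0$, so $u\ne0$.

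\emph{Remaining case $\al\ge4m$, so $1<p\le2$.} This is the step I expect to be the real obstacle, since the mountain pass geometry near $0$ is lost.

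For $1<p<2$, the term $-\frac1p\int|u|^p$ dominates near $0$. For fixed $v\ne0$ this gives $I(tv)<0$ for small $t>0$, so $\inf_{\overline{B_\rho}}I<0$ for every $\rho>0$. I would follow the concave--convex truncation scheme. Set $\tilde I(u)=\frac12\|u\|^2-\frac1p\int|u|^p-\frac{\la}{2q}\,\psi(\|u\|)\,\mathbb D(u)$, where $\mathbb D$ denotes the double integral and $\psi$ is a smooth cut-off with $\psi=1$ on $[0,\rho]$ and $\psi=0$ on $[2\rho,\infty)$. Then $\tilde I$ is coercive and bounded below, and it is weakly lower semicontinuous by the compactness above. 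A global minimizer $u$ therefore satisfies $\tilde I(u)<0$, hence $u\ne0$.

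To show $\|u\|<\rho$, so that $u$ is a critical point of $I$ itself, I would first choose $\rho$ small enough. Then, after the rescaling $u=sw$, I would check that every negative-energy critical point of $\tilde I$ has norm controlled by the sublinear part. If this control fails for large $\la$, the fallback is to apply Clark's theorem to the even functional $\tilde I$. Clark's theorem also produces a nontrivial critical point of small norm.

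For $p=2$ (exactly $\al=4m$), the local term is linear with coefficient $1$. I would then use the mountain pass or linking argument above, according to whether $1<\la_1^m$ or not. The superlinear Choquard term with $2q>2$ supplies the compactness, and the hypothesis $1\ne\la_k^m$ would have to be checked or assumed in this degenerate case.
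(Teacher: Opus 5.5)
Taken literally, your argument is correct, and the part you flag as the real obstacle does not exist. The hypothesis $1<q<2^*_{\al,m}-1$ forces $p=2^*_{\al,m}>q+1>2$, i.e.\ $\al<4m$. So the case $\al\ge 4m$ is vacuous and should be dropped, together with its unfinished truncation/Clark argument and the extra non-resonance assumption for $p=2$. For $2<p<2^*_m$ your steps are sound:
\begin{itemize}
\item $\frac{2Nq}{2N-\al}<2^*_m$, so HLS bounds the double integral by $C\|u\|^{2q}$ through a compact embedding;
\item the Ambrosetti--Rabinowitz estimate with $\mu=\min\{2q,p\}$ gives boundedness of $(PS)$ sequences;
\item compactness of both nonlinear derivatives gives $(PS)_c$ at every level;
\item the mountain pass theorem then gives a nontrivial solution for every $\la>0$ and every $N\ge 2m+1$.
\end{itemize}

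That last conclusion should have been a warning: under your reading the dichotomy (1)/(2) is pointless. The paper's proof shows that the $2^*_{\al,m}$ in the local term is a typo for $2^*_m$. The theorem is introduced as the problem ``with the Sobolev critical exponent and subcritical nonlocal perturbation''. Its proof works with the threshold $\frac{m}{N}S^{\frac{N}{2m}}$ and the estimate $\int_\Om|u_\eps|^{2^*_m}dx=S^{\frac{N}{2m}}+O(\eps^N)$.

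For the intended nonlinearity $|u|^{2^*_m-2}u$ your key step fails outright. The embedding $\mb{H}_0^m(\Om)\hookrightarrow L^{2^*_m}(\Om)$ is not compact, and concentrating bubbles produce $(PS)$ sequences with no convergent subsequence at the level $\frac{m}{N}S^{\frac{N}{2m}}$. The paper instead proceeds in three steps:
\begin{enumerate}
\item It shows $(PS)_c$ holds only for $c<\frac{m}{N}S^{\frac{N}{2m}}$. This uses the Brezis--Lieb splitting and $I(u_0)\ge 0$. The Choquard part of $u_n-u_0$ vanishes because $q<2^*_{\al,m}$. Finally, the dichotomy $\beta=0$ or $\beta\ge S^{\frac{N}{2m}}$ gives $c\ge\frac{m}{N}\beta$.
\item It tests the mountain pass level with $u_\eps=\phi U_\eps$, using $\|u_\eps\|^2=S^{\frac{N}{2m}}+O(\eps^{N-2m})$ and a lower bound on the Choquard term of order $\eps^{2N-\al-(N-2m)q}$. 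This gain beats the $O(\eps^{N-2m})$ error exactly when $2N-\al-(N-2m)q<N-2m$, i.e.\ when $N>\frac{2m(q+1)-\al}{q-1}$, which is condition (1).
\item In the complementary range it takes $\la$ large, so that $\max_{t\ge0}I(tu_\eps)\to 0$.
\end{enumerate}
So your proposal proves the statement as typed. But it sidesteps, rather than supplies, the compactness-recovery argument below the critical threshold, which is the actual content of the theorem.
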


\noi We use the following notation and definition throughout the paper:
\begin{itemize}
    \item Throughout the paper $ C^{\prime}, C, C_1, C_2, C_3, C_4, C_5,....$ denotes positive constants.\\
    \item We denote the standard norm $|\cdot|_p$ for the $L^p$ norm for $p\in [0, \infty]$.\\
    \item  We denote strong convergence and weak convergence by $\to$\ and $\weak$ respectively. 
    \item Let $X$ be a real Hilbert space and $\mc{I}\in C^1(X, \mb{R})$. A sequence $(u_n)\subset X$ is a Palais-Smale sequence at level $c$ (or $(PS)_c$ sequence) if, \[\mc{I}(u_n) \to c \quad \text{ and} \quad \mc{I}^{\prime}(u_n) \to 0, \text{ as} \quad n\to \infty.\] 
\noi The functional $\mc{I}$ is said to satisfy the Palais–Smale condition at level $c$ if every $(PS)_c$ sequence has a convergent subsequence in X.
\end{itemize}
This paper is organised as follows: 
In section 2, we give some preliminary results and prove the important estimates with the help of the minimizers. 
 In the last three sections-$3$, $4$ and $5$, we obtain the existence results for different types of perturbation.
\section{\bf{Minimizers Section}}
\noi In this section, we introduce the functional setting and some basic results. Moreover, we determine the key estimates with the help of the minimizers. Firstly, we define the functional space,
\begin{align*}
\mb{H}^m_0(\Om):= \text{ Closure of } {C_c^\infty(\Omega)} \text{ functions with respect to the norm } \|\cdot\|_{\mb{H}^m(\Om)},\end{align*}  is a Hilbert space equipped with the norm $\|u\|^2 = \int_{\Om} |D^m u|^2 dx, $
where\[D^m u = \begin{cases}
(-\Delta)^\frac{m}{2}u , & \text{if } m = 2j,\quad j = 1, 2, \ldots ,\\
\nabla(-\Delta)^\frac{(m-1)}{2} u , & \text{if } m = 2j - 1,\quad j = 1, 2, \ldots,
\end{cases}\]
and $\mb{H}^m(\Om)= \left\{u\in L^2(\Om): D^{\al} u\in L^2(\Om), \text {where} \,  0\leq|\al|\leq m \right\}$  with the norm $\|u\|_{\mb{H}^m(\Om)} = ( \|u\|_{L^2}^2 + \|D^m u\|_{L^2}^2 ),$ is a Hilbert space.\\
\begin{proposition}(Hardy-Littlewood-Sobolev inequality)
Let $t, s>1$ and $0<\al<N$ with $\frac{1}{s}+\frac{\al}{N}+\frac{1}{t}= 2$, for $g\in L^t(\mb{R}^N)$ and $h\in L^s(\mb{R}^N)$. Then there exist a sharp constant $C(t, N, \al, s),$ independent of $g$ and $h$ such that
\begin{equation}\label{e2}
\int_{\mb{R}^N} \int_{\mb{R}^N} \frac{g(x) h(y)}{|x-y|^\al} \, dx \, dy \leq C(t, N, \al, s) \|g\|_{L^t(\mb{R}^N)} \|h\|_{L^s(\mb{R}^N)}.
\end{equation}
\end{proposition}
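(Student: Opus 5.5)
The plan is to treat the Hardy--Littlewood--Sobolev inequality \eqref{e2} as the classical result of Hardy, Littlewood and Sobolev (in Lieb's form) and to reproduce the standard argument, not a new one. First I reduce to nonnegative functions: replacing $g,h$ by $|g|,|h|$ can only increase the left-hand side of \eqref{e2} and leaves the norms unchanged. By homogeneity I may also normalize $\|g\|_{L^t}=\|h\|_{L^s}=1$. The bilinear form is then
\[
I(g,h)=\int_{\mb{R}^N}\int_{\mb{R}^N} g(x)\,|x-y|^{-\al}\,h(y)\,dx\,dy .
\]

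The cleanest route is through the layer-cake representation. I write
\[
g=\int_0^\infty \chi_{\{g>a\}}\,da,\qquad h=\int_0^\infty \chi_{\{h>b\}}\,db,\qquad |x|^{-\al}=\al\int_0^\infty c^{-\al-1}\chi_{\{|x|<c\}}\,dc .
\]
This reduces $I(g,h)$ to a triple integral of
\[
J(a,b,c)=\int\int \chi_{G_a}(x)\,\chi_{B_c}(x-y)\,\chi_{H_b}(y)\,dx\,dy ,
\]
where $G_a=\{g>a\}$, $H_b=\{h>b\}$ and $B_c=\{|x|<c\}$. The elementary bound on $J$ is
\[
J\le \min\{|G_a|\,|H_b|,\ |G_a|\,|B_c|,\ |H_b|\,|B_c|\}.
\]
Using $|B_c|=\omega_N c^N$, I first integrate in $c$, splitting the range at the value where the minimum switches. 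This yields a bound of the form
\[
C\,|G_a|\,|H_b|^{1-\al/N}
\]
when $|G_a|\ge|H_b|$, and the symmetric bound otherwise.

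It then remains to integrate in $a,b$ and recover $\|g\|_t^t$ and $\|h\|_s^s$. Here I use $\int_0^\infty t a^{t-1}|G_a|\,da=\|g\|_t^t$, and I split the $(a,b)$ region according to the comparison $b\lessgtr a^{t/s}$. On each piece I apply a Hölder/Fubini argument, and the exponent relation $\frac1t+\frac\al N+\frac1s=2$ makes the powers match exactly. Combined with the normalization, this gives $I\le C(t,N,\al,s)$.

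The main obstacle is this last bookkeeping step: the exponent relation has to close the integrals precisely, with no logarithmic divergence. The condition $t,s>1$ is what keeps both pieces finite.

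As an alternative, in case the layer-cake computation becomes messy, I would fall back on the operator route. Set $T h=|\cdot|^{-\al}*h$. The kernel lies in weak-$L^{N/\al}$, so the weak Young (Marcinkiewicz interpolation) inequality gives $\|Th\|_{L^{t'}}\le C\|h\|_{L^s}$ with $\frac{1}{t'}=\frac1s+\frac\al N-1$. Hölder then gives $\int g\,Th\le \|g\|_t\|Th\|_{t'}$.

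Existence of a \emph{sharp} constant follows immediately, since the set of admissible constants is nonempty and bounded below, so its infimum works. The sharp value is only identified explicitly (Lieb) in the case $t=s=\frac{2N}{2N-\al}$, and the paper needs no more than this.
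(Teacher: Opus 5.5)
The paper gives no proof of this proposition. It quotes the classical Hardy--Littlewood--Sobolev inequality, with Lieb's explicit constant for $t=s=\frac{2N}{2N-\al}$, so a correct standard argument or a citation is all that is required. Your fallback already supplies one. Since $|x|^{-\al}\in L^{N/\al,\infty}$ with $N/\al>1$, the weak-type Young (O'Neil) inequality maps $L^s\to L^{t'}$, because $1+\frac1{t'}=\frac1s+\frac\al N$ is exactly the exponent relation, and $t>1$ gives $t'<\infty$. H\"older's inequality then finishes. Your remark that the infimum of the admissible constants is itself admissible correctly handles the word ``sharp''.

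Your primary layer-cake route, however, has an error at the $c$-integration step, and as written the final bookkeeping cannot close. Set $u=|G_a|$, $v=|H_b|$ and $\omega_N=|B_1|$. If $u\ge v$, then $\min\{uv,\,u\omega_Nc^N,\,v\omega_Nc^N\}=v\min\{u,\omega_Nc^N\}$, and splitting at $\omega_Nc^N=u$ gives exactly
\[
\al\int_0^\infty c^{-\al-1}\,v\min\{u,\omega_Nc^N\}\,dc=\frac{N}{N-\al}\,\omega_N^{\al/N}\,v\,u^{1-\al/N}.
\]
So the \emph{smaller} level set carries the full power, and the $c$-integral is at most $C\min\{u\,v^{1-\al/N},\,v\,u^{1-\al/N}\}$ for every $(a,b)$. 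This min form is what your split at $b=a^{t/s}$ needs, since the split is unrelated to how $u$ and $v$ compare:
\begin{itemize}
\item on $\{b\le a^{t/s}\}$ you use $u\,v^{1-\al/N}$ and apply H\"older in $b$ over $(0,a^{t/s})$; this converges at $b=0$ exactly when $t>1$ and produces the factor $a^{t-1}$;
\item on $\{b>a^{t/s}\}$ you use $v\,u^{1-\al/N}$ and apply H\"older in $a$ over $(0,b^{s/t})$; this converges exactly when $s>1$ and produces $b^{s-1}$.
\end{itemize}

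Your bound, $C\,|G_a|\,|H_b|^{1-\al/N}$ when $|G_a|\ge|H_b|$ and symmetric otherwise, is the \emph{maximum} of the two expressions. It is true but too weak to close the argument. For example, take $h=\chi_E$ with $|E|=1$, and $g\in L^t\setminus L^1$ with $|\{g>a\}|=a^{-\gamma}$ for $0<a<1$ (and $0$ for $a\ge 1$), where $1\le\gamma<t$. Then $|G_a|\ge|H_b|$ on all of $(0,1)^2$, and your bound integrates to $C\int_0^1 a^{-\gamma}\,da=\infty$. The correct bound gives $C\int_0^1 a^{-\gamma(1-\al/N)}\,da<\infty$, because $\gamma(1-\al/N)<t\bigl(\tfrac1t+\tfrac1s-1\bigr)<1$. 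Once you swap the roles and state the min form explicitly, your argument is the Lieb--Loss proof and is complete.
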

\noi If $t=s=\frac{2N}{2N-\al}$, then
  \[C(t, N, \al, s)=C(N, \al)=\pi^{\frac{\al}{2}}\frac{\Gamma(\frac{N}{2} -{\frac{\al}{2}})}{\Ga(N-{\frac{\al}{2}})}\left(\frac{\Ga(\frac{N}{2})}{\Ga{(N)}}\right)^{\frac{\al}{N}-1},\] 
 such that equality holds in \eqref{e2} if and only if $g = C h$ and
\[h(x) = A \left( b^2 + |x - a|^2 \right)^{\frac{2N-\al}{2}}, \] for some $A \in \mb C, 0\not=b\in\mb{R}$ and $a \in\mb{R}^N.$
Suppose $g=h=|u|^q$, then by Hardy-Littlewood-Sobolev inequality,
$\int_{\mb{R}^N} \int_{\mb{R}^N} \frac{|u(x)|^{q}|u(y)|^{q}}{|x - y|^{\al}} \, dx \, dy$ is well defined for $|u|^{q} \in L^s(\mb{R}^N)$ with $s>1$ and $\frac{2}{s} + \frac{\al}{N}=2$. Thus, for $u \in\mb {H}_0^m(\mb{R}^N)$, $s$ satisfies the following inequality:
\[\frac{2N - \al}{N} \leq q \leq \frac{2N- \al}{N- 2m} := 2_{\al, m}^*, \]
where $\frac{2N- \al}{N}$ and $2_{\al, m}^*$ are known as the lower and upper critical exponents, respectively, in the sense of the Hardy-Littlewood-Sobolev inequality.\\
\noi Now, for all $u \in\mb{H}_0^m(\mb{R}^N)$, by the Hardy-Littlewood-Sobolev inequality, it is observed that
\begin{equation}\label{e3}
\left({\int_{\mb{R}^N} \int_{\mb{R}^N} \frac{|u(x)|^{2_{\al, m}^*}|u(y)|^{2_{\al, m}^*}}{|x - y|^\al} \, dx\, dy } \right)^ \frac{1}{2_{\al, m}^*}
\leq (C(N, \al))^{\frac{1}{2_{\al, m}^*}} \|u\|_{2_m^*}^{2}.
\end{equation}
\begin{lemma} \label{l17}
Assume $N\geq2m+1$ and $0<\al<N$. Let
\[\|\cdot\|_{NL}:= \left(\int_{\Om} \int_{\Om} \frac{|\cdot|^{2^*_{\al,m}}|\cdot|^{2^*_{\al,m}}}{|x-y|^\al} \,dx \,dy\right)^{\frac{1}{22^*_{\al, m}}}\] 
and \[X_{NL}:=\{u:\Om \to \mb{R} \text{ is measuraable}, \|u\|_{NL} <+\infty\}.\]
Then $\|.\|_{NL}$ is a norm in $X_{NL}$. Moreover, under the norm $\|.\|_{NL}$, $X_{NL}$ is a Normed space.  
\end{lemma}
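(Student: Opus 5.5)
The plan is to write $p:=2^*_{\al,m}$ and to recognize $\|u\|_{NL}$ as a weighted $L^p$-type quantity. The point is that the Riesz kernel $|x|^{-\al}$ is positive definite for $0<\al<N$: its Fourier transform is a positive constant multiple of $|\xi|^{\al-N}$. Equivalently, by the semigroup property of Riesz potentials, $|x-y|^{-\al}=c\int_{\mb{R}^N}|x-z|^{-\frac{N+\al}{2}}|z-y|^{-\frac{N+\al}{2}}\,dz$ with $c>0$. Substituting this and using Fubini (everything is nonnegative) gives
\[
\|u\|_{NL}^{2p}=c\int_{\mb{R}^N}\Big(\int_\Om \frac{|u(x)|^{p}}{|x-z|^{\frac{N+\al}{2}}}\,dx\Big)^2 dz .
\]
With $K(x,z):=|x-z|^{-\frac{N+\al}{2}}$, this says
\[
\|u\|_{NL}=c^{\frac{1}{2p}}\Big\|\,\big\| u\big\|_{L^p(\Om,K(\cdot,z)dx)}\Big\|_{L^{2p}(\mb{R}^N_z)} ,
\]
which is a mixed-norm expression. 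Every norm property will then follow from properties of $L^p$ norms nested inside an $L^{2p}$ norm. Note that $p>1$, because $2N-\al>N-2m$.

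Positivity and homogeneity are immediate. Clearly $\|u\|_{NL}\ge 0$, and $\|\la u\|_{NL}=|\la|^{\frac{2p}{2p}}\|u\|_{NL}=|\la|\|u\|_{NL}$. If $\|u\|_{NL}=0$, then the double integral of the strictly positive kernel against $|u(x)|^p|u(y)|^p$ vanishes. So $|u(x)|^p|u(y)|^p=0$ for a.e. $(x,y)$, and by Fubini $|u|^p\otimes|u|^p=0$ a.e. forces $u=0$ a.e.

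For the triangle inequality, fix $z$ and apply Minkowski's inequality in $L^p(\Om,K(\cdot,z)dx)$:
\[
\Phi_{u+v}(z)\le \Phi_u(z)+\Phi_v(z),\qquad \Phi_u(z):=\Big(\int_\Om K(x,z)|u(x)|^p\,dx\Big)^{1/p}.
\]
Then apply Minkowski in $L^{2p}(\mb{R}^N_z)$ to get $\|\Phi_{u+v}\|_{2p}\le\|\Phi_u\|_{2p}+\|\Phi_v\|_{2p}$. Since $\|u\|_{NL}=c^{1/(2p)}\|\Phi_u\|_{2p}$, this gives $\|u+v\|_{NL}\le\|u\|_{NL}+\|v\|_{NL}$. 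The same computation shows $X_{NL}$ is closed under addition and scalar multiplication, so it is a vector space (functions being identified a.e.), and hence a normed space.

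The main obstacle is justifying the factorization of the kernel. I would cite the Riesz composition formula $I_a*I_b=I_{a+b}$ with $a=b=\frac{N-\al}{2}$, where $I_a(x)=\gamma_a|x|^{a-N}$ and $0<a<N/2$. This gives $|x-y|^{-\al}$ as a positive multiple of $\int |x-z|^{-\frac{N+\al}{2}}|z-y|^{-\frac{N+\al}{2}}dz$, the integral converging because $\frac{N+\al}{2}<N$ and $N+\al>N$. If one prefers to avoid this formula, an alternative for the triangle inequality is to view $B(f,g)=\iint f(x)g(y)|x-y|^{-\al}$ as a positive semidefinite bilinear form. One then uses Cauchy--Schwarz $B(f,g)\le B(f,f)^{1/2}B(g,g)^{1/2}$ together with a Hölder-type bound on $|u+v|^p$; however, the mixed-norm route is cleaner and is the one I would follow.
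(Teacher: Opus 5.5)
Your proposal is correct and is essentially the paper's proof: factor the Riesz kernel via the semigroup property so that $\|u\|_{NL}^{2\cdot 2^*_{\al,m}}$ becomes an integral of $\bigl(\int_\Om |u(x)|^{2^*_{\al,m}}|x-z|^{-\frac{N+\al}{2}}\,dx\bigr)^2$, then apply Minkowski's inequality twice, first in the weighted $L^{2^*_{\al,m}}$ space and then in $L^{2\cdot 2^*_{\al,m}}$ in the outer variable. Your version is more careful than the paper's. You keep the positive constant from the composition formula and integrate the outer variable over all of $\mb{R}^N$; the paper integrates over $\Om$ there, which does not give an identity. You also check definiteness and closure of $X_{NL}$ under addition, which the paper only asserts.
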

\begin{proof}
By the semigroup property of the Riesz potential (see \cite{28riesz1949integrale}), we obtain
\[\int_{\Om} \int_{\Om} \frac{|u(x)|^{2^*_{\al,m}}|u(y)|^{2^*_{\al,m}}}{|x-y|^\al} d x d y=\int_{\Om}\left(\int_{\Om} \frac{|u(y)|^{2^*_{\al,m}}}{|x-y|^{\frac{N +\al}{2}}} d y\right)^2 d x,\]
for every $u \in \mb{H}_0^m(\Om)$. Then, Minkowski's inequality, yields for any $x \in \Om$,
\begin{align*}
\left(\int_{\Om} \frac{|u_1(y)+u_2(y)|^{2^*_{\al,m}}}{|x-y|^{\frac{N+\al}{2}}} d y\right)^2 & =\left(\int_{\Om}\left|\frac{u_1(y)}{|x-y|^{\frac{N+\al}{22^*_{\al,m}}}}+\frac{u_2(y)}{|x-y|^{\frac{N+\al}{22^*_{\al,m}}}}\right|^{2^*_{\al,m}} d y\right)^{\frac{1}{2^*_{\al,m}}22^*_{\al,m}} \\
& \leq\left(\left(\int_{\Om} \frac{|u_1(y)|^{2^*_{\al,m}}}{|x-y|^{\frac{N+\al}{2}}} d y\right)^{ \frac{2}{22^*_{\al,m}}}+\left(\int_{\Om} \frac{|u_2(y)|^{2^*_{\al,m}}}{|x-y|^{\frac{N+\al}{2}}} d y\right)^{\frac{2}{22^*_{\al,m}}}\right)^{2 2^*_{\al,m}}.
\end{align*}
Notice that the integrals are non-negative and so, by Minkowski's inequality again, we have
\begin{align*}
\left(\int_{\Om}\left(\int_{\Om} \frac{|u_1(y)+u_2(y)|^{2^*_{\al,m}}}{|x-y|^{\frac{N+\al}{2}}} d y\right)^2 d x\right)^{\frac{1}{22^*_{\al,m}}}\leq&\left(\int_{\Om}\left(\int_{\Om} \frac{|u_1(y)|^{2^*_{\al,m}}}{|x-y|^{\frac{N+\al}{2}}} d y\right)^2 dx\right)^{\frac{1}{22^*_{\al,m}}}\\ & \quad +\left(\int_{\Om}\left(\int_{\Om} \frac{|u_2(y)|^{2^*_{\al,m}}}{|x-y|^{\frac{N+\al}{2}}} d y\right)^2 dx\right)^{\frac{1}{22^*_{\al,m}}},
\end{align*}\\
i.e.,
\[\|u_1 +u_2\|_{NL} \leq \|u_1\|_{NL} +\|u_2\|_{NL},\]\\
for every $u_1,u_2 \in L^{2^*_m}(\Om)$ and the remaining property, nonnegativity and absolute homogeneity follow trivially. So, $\|\cdot\|_{NL}$ is a norm. 
\end{proof} 
\noi The following Brezis--Lieb type lemma for the nonlocal term is proved in \cite{2ackermann2004periodic}(the subcritical case) and \cite{1gao2017nonlocal}(the critical case).
\begin{lemma}\label{l3}
Let $N\geq2m+1$, $0<\al<N$. If $u_n$ is a bounded sequence $\{u_n\}$ in $L^{2_m^*}(\Om)$ such that $u_n\to u$ a.e. in $\Om$ as $n \to\infty,$ then the following holds, as $n \to\infty$
\[\int_{\Om}(|x|^{-\al}*|u_n|^{2_{\al, m}^*})|u_n|^{2_{\al, m}^*} dx- \int_\Om(|x|^{-\al}*|u_n -u|^{2_{\al, m}^*})|u_n-u|^{2_{\al, m}^*} dx \to \int_\Om(|x|^{-\al}*|u|^{{2_{\al, m}^*}})|u|^{2_{\al, m}^*} dx.\]
\end{lemma}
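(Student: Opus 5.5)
The plan is to prove Lemma \ref{l3} through the classical Brezis--Lieb lemma combined with the Hardy--Littlewood--Sobolev inequality \eqref{e2}. Set $p=2^*_{\al,m}$ and $s=\frac{2N}{2N-\al}$, so that $ps=\frac{2N}{N-2m}=2^*_m$. Write $v_n=u_n-u$. Because $(u_n)$ is bounded in $L^{2^*_m}(\Om)$, the functions $|u_n|^p$ are bounded in $L^s(\Om)$. Since $u_n\to u$ a.e., we also get $|u_n|^p\weak |u|^p$ weakly in $L^s(\Om)$: a bounded sequence in $L^s$ with $s>1$ that converges a.e. converges weakly to its a.e. limit. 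The same argument gives $|v_n|^p\weak 0$ in $L^s(\Om)$.

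\textbf{Step 1 (pointwise Brezis--Lieb in $L^s$).} We show that $|u_n|^p-|v_n|^p\to |u|^p$ strongly in $L^s(\Om)$. The elementary inequality
\[\big||a+b|^p-|a|^p\big|\le \eps|a|^p+C_\eps|b|^p\]
gives the bound
\[\big||u_n|^p-|v_n|^p-|u|^p\big|\le \eps|v_n|^p+(C_\eps+1)|u|^p.\]
We then run the Brezis--Lieb truncation argument. The functions
\[W_{\eps,n}:=\Big(\big||u_n|^p-|v_n|^p-|u|^p\big|-\eps|v_n|^p\Big)^+\]
are dominated by $(C_\eps+1)|u|^p\in L^s$ and tend to $0$ a.e. Dominated convergence therefore gives $\int_\Om W_{\eps,n}^s\to0$. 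Hence
\[\limsup_{n\to\infty}\big\||u_n|^p-|v_n|^p-|u|^p\big\|_{s}\le \eps\sup_n\||v_n|^p\|_s\le C\eps,\]
and we let $\eps\to0$. The main point of care is this step, in particular the constants in the elementary inequality when it is raised to the power $s$; it is standard.

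\textbf{Step 2 (expanding the double integral).} Write $D(g,h)=\int_\Om\int_\Om \frac{g(x)h(y)}{|x-y|^\al}\,dx\,dy$, which is bilinear and symmetric. By HLS, $|D(g,h)|\le C(N,\al)\|g\|_s\|h\|_s$ on $L^s(\Om)$, where functions are extended by zero to $\mb{R}^N$. Set $g_n=|u_n|^p$, $h_n=|v_n|^p$ and $w=|u|^p$. Then
\[D(g_n,g_n)-D(h_n,h_n)=D(g_n-h_n,\,g_n-h_n)+2D(g_n-h_n,\,h_n).\]
Since $g_n-h_n\to w$ strongly in $L^s$ and the sequences are bounded, continuity of $D$ gives $D(g_n-h_n,g_n-h_n)\to D(w,w)$.

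\textbf{Step 3 (the cross term).} Decompose
\[D(g_n-h_n,h_n)=D(g_n-h_n-w,\,h_n)+D(w,h_n).\]
The first term is at most $C\|g_n-h_n-w\|_s\|h_n\|_s\to0$. For the second, the map $h\mapsto D(w,h)=\int_\Om (|x|^{-\al}*w)\,h\,dx$ is a bounded linear functional on $L^s$, because $|x|^{-\al}*w\in L^{s'}$ by HLS. Since $h_n\weak0$ in $L^s$, this term tends to $0$.

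Combining Steps 2 and 3 yields $D(g_n,g_n)-D(h_n,h_n)\to D(w,w)$, which is exactly the statement of Lemma \ref{l3}.
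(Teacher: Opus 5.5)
Your proof is correct and follows essentially the same route as the paper. Both use the Brezis--Lieb convergence $|u_n|^{2^*_{\al,m}}-|u_n-u|^{2^*_{\al,m}}\to|u|^{2^*_{\al,m}}$ in $L^{\frac{2N}{2N-\al}}$, the same bilinear splitting $D(g_n,g_n)-D(h_n,h_n)=D(g_n-h_n,g_n-h_n)+2D(g_n-h_n,h_n)$, and the weak convergence $|u_n-u|^{2^*_{\al,m}}\weak 0$, paired against a convergent Riesz potential in $L^{\frac{2N}{\al}}$, to kill the cross term. The differences are minor: you prove the Brezis--Lieb step, which the paper only cites (and with the sign of the difference written backwards), and you should add a line noting that $u\in L^{2^*_m}(\Om)$ by Fatou before using $|u|^{2^*_{\al,m}}$ as a dominating function.
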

\begin{proof}
Firstly, the proof proceeds in a  similar manner to the proof of the Brezis--Lieb lemma (see \cite{37article}), we know that
\[
|u_n - u|^{2_{\al, m}^*} - |u_n|^{2_{\al, m}^*} \to |u|^{2_{\al, m}^*}
\text{ in}  \, L^{\frac{2N}{2N-\al}}(\mb{R}^N) \text{ as} \,  n \to \infty.\]
 The Hardy--Littlewood--Sobolev inequality implies that
\[
|x|^{-\al} * \left(|u_n - u|^{2_{\al, m}^*} - |u_n|^{2_{\al, m}^*}\right)
\to
|x|^{-\al} * |u|^{2_{\al, m}^*}
\text{ in}  \, L^{\frac{2N}{\al}}(\mb{R}^N) \text{ as} \,  n \to \infty.\]
On the other hand, we notice that 
\begin{align*}
\int_{\mb{R}^N}
\left(|x|^{-\al} * |u_n|^{2_{\al, m}^*}\right)|u_n|^{2_{\al, m}^*}\,dx
-
&\int_{\mb{R}^N}
\left(|x|^{-\al} * |u_n-u|^{2_{\al, m}^*}\right)|u_n-u|^{2_{\al, m}^*}\,dx \\
&=
\int_{\mb{R}^N}
\left(|x|^{-\al} *
\left(|u_n|^{2_{\al, m}^*}-|u_n-u|^{2_{\al, m}^*}\right)\right)
\left(|u_n|^{2_{\al, m}^*}-|u_n-u|^{2_{\al, m}^*}\right)\,dx \\
&\quad
+2\int_{\mb{R}^N}
\left(|x|^{-\al} *
\left(|u_n|^{2_{\al, m}^*}-|u_n-u|^{2_{\al, m}^*}\right)\right)
|u_n-u|^{2_{\al, m}^*}\,dx \\
\end{align*}
Since, we have
$|u_n-u|^{2_{\al, m}^*} \weak 0$
in $L^{\frac{2N}{2N-\al}}(\mb{R}^N)$ as $n \to \infty$.
Therefore, as $n\to\infty$
\[\int_{\Om}(|x|^{-\al}*|u_n|^{2_{\al, m}^*})|u_n|^{2_{\al, m}^*} dx- \int_\Om(|x|^{-\al}*|u_n -u|^{2_{\al, m}^*})|u_n-u|^{2_{\al, m}^*} dx \to \int_\Om(|x|^{-\al}*|u|^{{2_{\al, m}^*}})|u|^{2_{\al, m}^*} dx.\]
This completes the proof.
\end{proof}

\vspace{0.02cm}
\noi Let $S$ be the best constant defined as
\[S := \inf_{u \in\mb{H}_0^m(\Om) \setminus \{0\}} \frac{\int_\Om|D^m u|^2 dx}{\left( \int_\Om |u(x)|^{2_m^*} dx \right)^\frac{2}{2_m^*}},\]
where $2_m^* = \frac{2N}{N-2m}$. Then it is well known that $S$ is achieved if and only if $\Om = \mb{R}^N$, by the function \cite{26swanson1992best}
\[U(x) = \frac{C_{N,m}^\frac{N-2m}{4m}}{\left(1+|x|^2\right)^{\frac{N-2m}{2}}}.\]
All minimizer of $S$ are obtained by
\begin{equation} \label{e57}
U_{\eps}(x) = \eps^ \frac{2m-N}{2}U\left( \frac{x}{\eps} \right) = \frac{C_{N, m}^{\frac{N-2m}{4m}} \, \eps^{\frac{N - 2m}{2}}}{\left( \eps^2 + |x|^2 \right)^{\frac{N - 2m}{2}}},  \text{ for } \eps > 0, \end{equation} 
where $C_{N, m}:=C(N, m) = \prod_{j=1}^m (N-2j)$.\\

\noi Next, we define $S_{H, L}$ as
\begin{align}\label{e4}
S_{H, L}:= \inf_{u \in\mb{H}_0^m(\mb{R}^N) \setminus \{0\}} \frac{\int_{\mb{R}^N} |D^m u|^2 dx}{\left( \int_{\mb{R}^N} \int_{\mb{R}^N} \frac{|u(x)|^{2_{\al, m}^*} |u(y)|^{2_{\al, m}^*}}{|x - y|^\al} \, dx \, dy \right)^\frac{1}{2_{\al, m}^*}}.
\end{align}


\begin{proposition}
The constant $S_{H, L}$ is achieved if and only if
\[u = C\left( \frac{k}{k^2 + |x - a|^2} \right)^{\frac{N - 2m}{2}},\]
where $C > 0$ is a constant, $a \in \mb{R}^N$ and $k \in \mb{R}^+$. Moreover,
\[S_{H, L} = \frac{S}{(C(N, \al))^{\frac{1}{2_{\al, m}^*}}}.\notag\]
\end{proposition}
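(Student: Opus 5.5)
The plan is to sandwich $S_{H,L}$ between two quantities, both equal to $S/(C(N,\al))^{1/2^*_{\al,m}}$. One direction comes from the Hardy--Littlewood--Sobolev inequality \eqref{e3}. The other comes from evaluating the quotient in \eqref{e4} at the Sobolev extremals $U_\eps$ from \eqref{e57}. The characterization of minimizers then comes from the equality cases in both inequalities.

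\textbf{Step 1 (lower bound).} For any $u\in\mb{H}^m_0(\mb{R}^N)\setminus\{0\}$, \eqref{e3} gives
\[\Big(\int_{\mb{R}^N}\int_{\mb{R}^N}\frac{|u(x)|^{2^*_{\al,m}}|u(y)|^{2^*_{\al,m}}}{|x-y|^\al}dx\,dy\Big)^{1/2^*_{\al,m}}\le C(N,\al)^{1/2^*_{\al,m}}|u|_{2^*_m}^2 .\]
Here we use $t=s=\frac{2N}{2N-\al}$, so that $|u|^{2^*_{\al,m}}\in L^{t}$ exactly when $u\in L^{2^*_m}$, because $2^*_{\al,m}\cdot\frac{2N}{2N-\al}=2^*_m$. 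Hence the quotient in \eqref{e4} is at least
\[\frac{1}{C(N,\al)^{1/2^*_{\al,m}}}\cdot\frac{\int|D^mu|^2}{|u|_{2^*_m}^2}\ \ge\ \frac{S}{C(N,\al)^{1/2^*_{\al,m}}}.\]
In this step I use that the Sobolev constant on $\mb{R}^N$ coincides with $S$. This holds by scaling and density of $C_c^\infty$, since $S$ is independent of the domain.

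\textbf{Step 2 (upper bound and attainment).} Take $u=U$, or any $U_\eps$ translated by $a$. Then $|u|^{2^*_{\al,m}}=c\,(k^2+|x-a|^2)^{-(2N-\al)/2}$, which is exactly the HLS extremal profile $h$ recorded after \eqref{e2}, with $g=h$. So equality holds in \eqref{e3}. At the same time $U$ attains $S$ by \cite{26swanson1992best}, so equality also holds in the Sobolev inequality. Therefore the quotient equals $S/C(N,\al)^{1/2^*_{\al,m}}$. This proves the formula for $S_{H,L}$ and shows that the functions $C\big(\frac{k}{k^2+|x-a|^2}\big)^{(N-2m)/2}$ are minimizers. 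Multiplying by constants $C>0$ does not change the quotient, since it is $0$-homogeneous.

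\textbf{Step 3 (only if).} Suppose $u$ attains $S_{H,L}$. Then both inequalities in Step 1 must be equalities. In particular $\int|D^mu|^2=S|u|_{2^*_m}^2$, so $u$ is a Sobolev extremal. By the classification in \cite{26swanson1992best}, up to sign, $u=C(k/(k^2+|x-a|^2))^{(N-2m)/2}$. Applying the same argument to $|u|$ fixes the sign convention to $C>0$.

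The main obstacle is this last step. I need to be sure that the Sobolev equality case is fully classified for the $m$-th order operator, that is, that every extremal is a translate or dilate of $U$. I will quote \cite{26swanson1992best}, which covers the polyharmonic case. One delicate point needs care here. For $m\ge2$, $|u|$ need not lie in $\mb{H}^m_0$, so I will avoid replacing $u$ by $|u|$. Instead I will argue that equality in Sobolev already forces $u$ to have a fixed sign by the classification. The other inputs are the HLS equality characterization stated above and the identity $2^*_{\al,m}t=2^*_m$, both of which are routine.
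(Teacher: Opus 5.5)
Your proof is correct and uses the same sandwich as the paper: HLS for the lower bound and evaluation at $U$ for the upper bound. It also makes explicit two things the paper leaves implicit, namely that the second inequality in the paper's upper-bound chain (which runs the wrong way for general $u$) is an equality at $U$ because $|U|^{2^*_{\al,m}}$ is the HLS extremal profile, and the ``only if'' direction via the equality case of the Sobolev inequality, which the paper merely asserts. Two small caveats: since the quotient is even, $-U$ is also a minimizer, so the classification can only give $C\neq 0$ and the condition $C>0$ is a normalization rather than something you can prove (your detour through $|u|$ is unnecessary); and attainment must be read in $D^{m,2}(\mb{R}^N)$, the completion of $C_c^\infty(\mb{R}^N)$ under $\|D^m\cdot\|_2$, because $U\notin L^2(\mb{R}^N)$ when $N\le 4m$.
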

\begin{proof}
Using the Hardy-Littlewood-Sobolev inequality, we obtain
\[S_{H, L} \geq \frac{1}{(C(N, \al))^{\frac{1}{2_{\al, m}^*}}} \inf_{u \in\mb {H}_0^m(\mb{R}^N) \setminus \{0\}} \frac{\int_{\mb{R}^N} |D^m u|^2 dx}{\left( \int_{\mb{R}^N} |u|^{2_m^*} \right)^\frac{2}{2_m^*}} = \frac{S}{(C(N, \al))^{\frac{1}{2_{\al, m}^*}}}.\]
Now with the help of the definition of $S_{H, L}$ and equation \eqref{e3}, we obtain \[S_{H, L} \leq \frac{\int_{\mb{R}^N} |D^m u|^2 dx}{\left( \int_{\mb{R}^N} \int_{\mb{R}^N} \frac{|u(x)|^{2_{\al, m}^*} |u(y)|^{2_{\al, m}^*}}{|x - y|^\al} \, dx \, dy \right)^\frac{1}{2_{\al, m}^*}}\leq\frac{\int_{\mb{R}^N} |D^m u|^2 dx}{{(C(N, \al))^{\frac{1}{2_{\al, m}^*}}}\left( \int_{\mb{R}^N} |u|^{2_m^*} \right)^\frac{2}{2_m^*}} = \frac{S}{(C(N, \al))^{\frac{1}{2_{\al, m}^*}}}.\]
Thus $S_{H, L}$ is achieved if and only if $u = C\left( \frac{k}{k^2 + |x - a|^2} \right)^{\frac{N - 2m}{2}}$, and hence 
$S_{H, L}=S(C(N, \al))^{\frac{-1}{2_{\al, m}^*}}.$
\end{proof} 
\noi Let $U(x):= \frac{C_{N,m}^{\frac{N-2m}{4m}}}{\left(1+|x|^2\right)^{\frac{N-2m}{2}}}$ be minimizers for $S$, then $\tilde{U}(x)$ be defined as \[\tilde{U}(x)=S^{\frac{(N-\al)(2m-N)}{4m(N+2m-\al)}}(C(N, \al))^{\frac{2m-N}{2(N+2m-\al)}}U_{\eps}(x), \quad \eps>0,\] is the unique minimizer for $S_{H, L}$ that satisfies the equation:
\begin{equation*}
(-\Delta)^m u = \left(\DD\int_{\mb{R}^N}\frac{|u(y)|^{2_{\al, m}^*}}{|x-y|^{\al}}dy\right) {|u|^{2_{\al,m}^*-2} u} \text{ in } \mb{R}^N,
\end{equation*}
with
\[\int_{\mb{R}^N}|D^m\Tilde{U}_\eps|^2 dx=\int_{\mb{R}^N}\int_{\mb{R}^N} \frac{|\Tilde{U}_\eps(x)|^{2_{\al, m}^*}|\Tilde{U}_\eps(y)|^{2_{\al, m}^*}}{|x-y|^\al}\, dx \, dy =(S_{H, L})^{\frac{2N-\al}{N+2m-\al}}.\]\\
\begin{lemma}
 Let $N\geq2m+1$ and $\al>0$. For any open subset $\Om$ of $\mb{R}^N$,
\[S_{H, L}(\Om):= \inf_{u \in\mb{H}_0^{m}(\mb{R}^N) \setminus \{0\}} \frac{\int_{\mb{R}^N} |D^m u|^2 dx}{\left( \int_{\mb{R}^N} \int_{\mb{R}^N} \frac{|u(x)|^{2_{\al, m}^*} |u(y)|^{2_{\al, m}^*}}{|x - y|^\al} \, dx \, dy \right)^\frac{1}{2_{\al, m}^*}} =S_{H, L},\]
where $S_{H, L}$ is never achieved except when $\Om =\mb{R}^N$.
\end{lemma}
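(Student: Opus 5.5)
The infimum in the statement is clearly meant over $\mb{H}_0^m(\Om)\setminus\{0\}$, with functions extended by zero to $\mb{R}^N$. I would prove the two inequalities $S_{H,L}(\Om)\ge S_{H,L}$ and $S_{H,L}(\Om)\le S_{H,L}$ separately, and then argue non-attainment by contradiction using the uniqueness part of the preceding Proposition.

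\textbf{Lower bound.} Every $u\in \mb{H}_0^m(\Om)$, extended by zero, belongs to $\mb{H}_0^m(\mb{R}^N)$. The extension leaves both the numerator $\int|D^m u|^2$ and the double integral unchanged. Hence the set of competitors for $S_{H,L}(\Om)$ is a subset of the competitors for $S_{H,L}$ in \eqref{e4}, and so $S_{H,L}(\Om)\ge S_{H,L}$.

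\textbf{Upper bound.} Since both the quotient and $S_{H,L}$ are invariant under translations, I may assume $0\in\Om$ and fix $\delta>0$ with $B_{2\delta}\subset\Om$. Take a cutoff $\psi\in C_c^\infty(B_{2\delta})$ with $\psi\equiv1$ on $B_\delta$, and set $u_\eps=\psi U_\eps$ with $U_\eps$ from \eqref{e57}. The standard estimates then give two things.
\begin{itemize}
\item For the numerator,
\[\int|D^m u_\eps|^2=\int_{\mb{R}^N}|D^m U_\eps|^2+O(\eps^{N-2m}).\]
The tail $\int_{|x|>\delta}|D^m U_\eps|^2$ is $O(\eps^{N-2m})$. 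Every term in which a derivative falls on $\psi$ is supported in $\delta<|x|<2\delta$. There $|\nabla^j U_\eps|\le C\eps^{(N-2m)/2}$, so these terms are also $O(\eps^{N-2m})$.
\item For the denominator, I bound it from below by restricting both integrations to $B_\delta$, where $u_\eps=U_\eps$. The complement contributes only $O(\eps^{N})$ or $O(\eps^{(2N-\al)/2})$, because $\int_{|x|>\delta}U_\eps^{2_m^*}=O(\eps^N)$. Combining this with the HLS inequality \eqref{e2} gives
\[\int\!\!\int\frac{|u_\eps(x)|^{2^*_{\al,m}}|u_\eps(y)|^{2^*_{\al,m}}}{|x-y|^\al}\ge \int\!\!\int_{\mb{R}^{2N}}\frac{U_\eps^{2^*_{\al,m}}(x)U_\eps^{2^*_{\al,m}}(y)}{|x-y|^\al}-O\big(\eps^{\frac{2N-\al}{2}}\big).\]
\end{itemize}
Since $U_\eps$ (up to the multiplicative constant used to define $\tilde U$) attains $S_{H,L}$, the quotient of $u_\eps$ tends to $S_{H,L}$ as $\eps\to0$. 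This yields $S_{H,L}(\Om)\le S_{H,L}$.

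\textbf{Non-attainment.} Suppose $\Om\neq\mb{R}^N$ and some $u\in\mb{H}_0^m(\Om)$ attains $S_{H,L}(\Om)=S_{H,L}$. Its zero extension then attains $S_{H,L}$ on $\mb{R}^N$. By the Proposition, $u=C\left(\frac{k}{k^2+|x-a|^2}\right)^{\frac{N-2m}{2}}$, which is strictly positive everywhere. This contradicts $u=0$ on $\mb{R}^N\setminus\Om$, a set of positive measure when $\Om$ is bounded. For a general open $\Om\ne\mb{R}^N$ the argument still works, since $u$ vanishes a.e. off $\overline\Om$ and $u$ is continuous and positive.

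\textbf{Main obstacle.} I expect the hardest step to be the numerator estimate for $m\ge2$. The cross terms in $D^m(\psi U_\eps)$ involve all lower derivatives $\nabla^jU_\eps$, and each has to be bounded using $|\nabla^jU_\eps(x)|\le C\eps^{\frac{N-2m}{2}}|x|^{2m-N-j}$ on the annulus. The denominator bound is routine once HLS is applied to the difference terms.
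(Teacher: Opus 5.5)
Your argument is correct for bounded $\Om$, which is the paper's setting. The lower bound is the same as the paper's. The non-attainment step is also the paper's: it appeals to uniqueness of extremals, and you make this explicit through positivity of the bubble.

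The upper bound $S_{H,L}(\Om)\le S_{H,L}$ goes by a different route.
\begin{itemize}
\item The paper takes a minimizing sequence $\{u_n\}\subset C_0^\infty(\mb{R}^N)$ for $S_{H,L}$. Both the numerator and the double integral are invariant under $u\mapsto \tau^{\frac{N-2m}{2}}u(\tau x+w)$, so each $u_n$ can be shrunk and translated into $\Om$. This needs only density of $C_c^\infty$ and a change of variables, and uses no knowledge of the extremals.
\item You instead truncate the concentrating bubble $U_\eps$ and estimate numerator and denominator separately. This relies on the Proposition (that $U_\eps$ attains $S_{H,L}$) and on the cross-term bounds for $m\ge 2$, which you rightly flag as the laborious part.
\end{itemize}
Your route gives a quantitative rate, namely $S_{H,L}+O(\eps^{\min\{N-2m,\frac{2N-\al}{2}\}})$ for the quotient of $\psi U_\eps$. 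This is essentially the content of \eqref{e11} and \eqref{e52}, which the paper needs later anyway for the mountain-pass estimates. For the lemma itself, the scaling argument is shorter and more robust.

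One correction concerns your closing remark that non-attainment holds for every open $\Om\neq\mb{R}^N$; this is not right. If $\overline{\Om}=\mb{R}^N$, knowing that $u$ vanishes a.e.\ off $\overline{\Om}$ gives no information. In fact, for $\Om=\mb{R}^N\setminus\{0\}$ a point has zero $\mb{H}^m$-capacity since $N>2m$. Hence $\mb{H}_0^m(\Om)$ coincides with $\mb{H}_0^m(\mb{R}^N)$, and $S_{H,L}(\Om)$ is attained exactly when $S_{H,L}$ is. Your argument does prove non-attainment whenever $\mb{R}^N\setminus\Om$ has positive measure, in particular for bounded $\Om$. The statement as written is too loose on this point, and so is the paper's one-line justification.
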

\begin{proof}
 It is obvious that $S_{H, L}\leq S_{H, L}(\Om)$ by $\mb{H}_0^{m}(\Om)\subset \mb{H}_0^{m}(\mb{R}^N)$. Let $\{u_n\} \subset C_0^{\infty} (\mb{R}^N)$ be a minimizing sequence for $S_{H, L}$. We make translations and dilations for $\{u_n\}$ by choosing $w_n \in\mb{R}^N $ and $\tau_n >0$ such that 
\[u_n^{w_n, \tau_n}(x) :=\tau_n^{\frac{N-2m}{2}} u_n(\tau_n x+ w_n) \in C_0^{\infty} (\Om),\]
which satisfies
\[\DD\int_{\mb{R}^N}\left|D^m u_n^{w_n, \tau_n}\right|^2 dx =\DD\int_{\mb{R}^N}\left|D^m u_n\right|^2 dx\]
and
\[\left( \int_{\mb{R}^N} \int_{\mb{R}^N} \frac{|u_n^{w_n, \tau_n}(x)|^{2_{\al, m}^*} |u_n^{w_n, \tau_n}(y)|^{2_{\al, m}^*}}{|x - y|^\al} \, dx \, dy \right) =\left( \int_{\mb{R}^N} \int_{\mb{R}^N} \frac{|u(x)|^{2_{\al, m}^*} |u(y)|^{2_{\al, m}^*}}{|x - y|^\al} \, dx \, dy \right).\]
Hence we obtain $S_{H, L}(\Om)\leq S_{H, L}.$ Since $\Tilde{U}(x)$ is the only class of functions such that the equality holds in the Hardy-Littlewood-Sobolev inequality, we know that $S_{H, L}(\Om)$ is achieved only when $\Om=\mb{R}^N$.
\end{proof}
\noi Let us fix $r>0$ be a small enough such that $B_r \subset \Om \subset B_{l_0 r}$ for some positive $l_0$ and let $\phi \in C_0^{\infty}(\Om)$ be a cut-off function defined as:
\[\phi(x)= \begin{cases}1 & \text { if } x \in B_r, \\
0 & \text { if } x \in \mb{R}^N \backslash \Om \end{cases} , 
\quad 0 \leq \phi(x) \leq 1 \quad \forall x \in \mb{R}^N, \quad |D^k\phi(x)|\leq b_k\quad \forall x \in \mb{R}^N,\]
and
\begin{equation}\label{e10}
u_\eps(x) :=\phi(x) U_\eps(x), \,  \text{ for} \, \eps>0,
\end{equation} 
where $U_\eps(x)$ defined in \eqref{e57}.

\noi Then, we have the following estimates; their proof can be found in  \cite[Lemma 4.3]{22shang2014multiple} and \cite[Lemma 2] {25grunau1995positive}
\begin{equation} \label{e11}
\DD\int_{\Om}\left|D^m u_{\eps}\right|^2 dx = S^{\frac{N}{2m}}+O\left(\eps^{N-2m}\right)
\end{equation}
\noi and
\begin{equation} \label{e54}
\DD\int_\Om|u_\eps|^2dx \geq \begin{cases} C_1 \eps^{N-2m} + O(\eps^{2m})& \text { if } N<4m,\\
 C_2 \eps^{2m}|\ln \eps|+O(\eps^{2m}) & \text { if } N=4m, \\
C_3 \eps^{2m} + O(\eps^{N-2m})& \text { if } N\geq4m+1. \end{cases}
\end{equation}\


\begin{lemma}
 Let $u_\eps$ be the minimizers defined in \eqref{e10}. Then we have the following estimates:
\begin{enumerate}
\item
$\DD\int_{\Om} \DD\int_{\Om} \frac{\left|u_{\eps}(x)\right|^{2_{\al, m}^*}\left|u_{\eps}(y)\right|^{2_{\al, m}^*}}{|x-y|^\al} \, dx \, dy \geq C(N, \al)^{\frac{N}{2m}} S_{H, L}^{\frac{2N-\al}{2m}}-O\left(\eps^{\frac{2N-\al}{2}}\right)$.\\

\item $\|u_\eps \|_1= O \left(\eps^{\frac{N-2m}{2}}\right).$\\

\item $\DD\int_{\Om} u_{\eps}^{q+1} dx \geq O\left(\eps^{N-\frac{(N-2m)(q+1)}{2}}\right).$
   
\end{enumerate}
\end{lemma}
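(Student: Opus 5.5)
Each of the three estimates is a direct computation with $u_\eps=\phi U_\eps$, using the explicit form \eqref{e57} and the change of variables $x=\eps z$.

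\textbf{Estimate (1).} We mimic the Gao--Yang argument for the Laplacian. Since $\phi\equiv 1$ on $B_r$ and the integrand is nonnegative, we restrict both integrations to $B_r$:
\[\int_{\Om}\int_{\Om}\frac{|u_\eps(x)|^{2^*_{\al,m}}|u_\eps(y)|^{2^*_{\al,m}}}{|x-y|^\al}\,dx\,dy\ \ge\ \int_{B_r}\int_{B_r}\frac{|U_\eps(x)|^{2^*_{\al,m}}|U_\eps(y)|^{2^*_{\al,m}}}{|x-y|^\al}\,dx\,dy.\]
We then write the right-hand side as the full double integral over $\mb{R}^N\times\mb{R}^N$ minus the two tails. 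One tail is $2\int_{\mb{R}^N\setminus B_r}\int_{\mb{R}^N}$, and the other is the ($\ge0$) overlap $-\int_{\mb{R}^N\setminus B_r}\int_{\mb{R}^N\setminus B_r}$, which may be discarded with the correct sign.

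The full integral is handled by the equality case of the Hardy--Littlewood--Sobolev inequality. Up to the normalization constant in $U$, $U_\eps$ is an extremal, so the double integral equals $C(N,\al)\,|U_\eps|_{2^*_m}^{2\cdot 2^*_{\al,m}}=C(N,\al)S^{\frac{2N-\al}{2m}}$ (using $|U|_{2^*_m}^{2^*_m}=S^{N/2m}$). Expressed via $S_{H,L}=S\,C(N,\al)^{-1/2^*_{\al,m}}$, this is exactly $C(N,\al)^{\frac{N}{2m}}S_{H,L}^{\frac{2N-\al}{2m}}$, a routine exponent bookkeeping.

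For the tail we apply \eqref{e2} with $t=s=\frac{2N}{2N-\al}$. This bounds it by $C\,\big(\int_{\mb{R}^N}|U_\eps|^{2^*_m}\big)^{\frac{2N-\al}{2N}}\big(\int_{\mb{R}^N\setminus B_r}|U_\eps|^{2^*_m}\big)^{\frac{2N-\al}{2N}}$. Outside $B_r$ we have $|U_\eps|\le C\eps^{\frac{N-2m}{2}}|x|^{-(N-2m)}$, so $\int_{\mb{R}^N\setminus B_r}|U_\eps|^{2^*_m}=O(\eps^N)$. The tail is therefore $O(\eps^{\frac{2N-\al}{2}})$, which gives (1).

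\textbf{Estimate (2).} We have $\|u_\eps\|_1\le\int_{B_{l_0r}}U_\eps\,dx$. Bounding $U_\eps\le C\eps^{\frac{N-2m}{2}}|x|^{-(N-2m)}$ and noting that $|x|^{2m-N}$ is integrable on bounded sets because $2m>0$, we get $O(\eps^{\frac{N-2m}{2}})$ directly.

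\textbf{Estimate (3).} We restrict to $B_r$ and substitute $x=\eps z$:
\[\int_{B_r}U_\eps^{q+1}\,dx=C\,\eps^{N-\frac{(N-2m)(q+1)}{2}}\int_{B_{r/\eps}}\frac{dz}{(1+|z|^2)^{\frac{(N-2m)(q+1)}{2}}}.\]
For $\eps<1$ the last integral is at least $\int_{B_r}(1+|z|^2)^{-\frac{(N-2m)(q+1)}{2}}dz>0$, a constant independent of $\eps$. This yields the claimed lower bound of order $\eps^{N-\frac{(N-2m)(q+1)}{2}}$. If one wants the integral bounded above as well, convergence at infinity requires $(N-2m)(q+1)>N$, but the lower bound holds regardless.

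\textbf{Main obstacle.} The only delicate point is (1): getting the sign bookkeeping of the decomposition right, and matching the constant $C(N,\al)^{N/2m}S_{H,L}^{(2N-\al)/2m}$ with the normalization of $U$. I would handle the latter by using the equality $\int|D^mU|^2=|U|_{2^*_m}^{2^*_m}=S^{N/2m}$, which follows from $U$ solving $(-\Delta)^mU=U^{2^*_m-1}$.
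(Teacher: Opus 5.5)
Your proof is correct and follows the paper's argument: you restrict to $B_r\times B_r$, evaluate the full-space term through the equality case of the Hardy--Littlewood--Sobolev inequality, bound the tail by Hardy--Littlewood--Sobolev at order $\eps^{\frac{2N-\al}{2}}$, and do direct radial computations for (2) and (3). The differences are cosmetic. You use the inclusion--exclusion form in which the outer--outer piece enters with a plus sign and can be dropped, whereas the paper bounds it separately as $A_3=O(\eps^{2N-\al})$. You also justify the value $C(N,\al)^{\frac{N}{2m}}S_{H,L}^{\frac{2N-\al}{2m}}$, which the paper only asserts, and in (2) you use a single pointwise bound instead of splitting the domain.
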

\begin{proof}
\begin{enumerate}
\item  In this case, we will estimate the convolution part; we know
 \begin{align}\label{e45}
\int_{\Om} \int_{\Om} \frac{\left|u_{\eps}(x)\right|^{2_{\al, m}^*}\left|u_{\eps}(y)\right|^{2_{\al, m}^*}}{|x-y|^\al} \, dx \, dy \geq & \int_{B_r} \int_{B_r} \frac{\left|u_{\eps}(x)\right|^{2_{\al, m}^*}\left|u_{\eps}(y)\right|^{2_{\al, m}^*}}{|x-y|^\al} \, d x \, d y \notag\\
&= \int_{B_r} \int_{B_r} \frac{\left|U_{\eps}(x)\right|^{2_{\al, m}^*}\left|U_{\eps}(y)\right|^{2_{\al, m}^*}}{|x-y|^\al} \, dx \, dy \notag\\
& =\int_{\mb{R}^N} \int_{\mb{R}^N} \frac{\left|U_{\eps}(x)\right|^{2_{\al, m}^*}\left|U_{\eps}(y)\right|^{2_{\al, m}^*}}{|x-y|^\al}-2 \int_{\mb{R}^N \backslash B_r}\int_{B_r} \frac{\left|U_{\eps}(x)\right|^{2_{\al, m}^*}\left|U_{\eps}(y)\right|^{2_{\al, m}^*}}{|x-y|^\al} \notag\\
& \quad -\int_{\mb{R}^N \backslash B_r}\int_{\mb{R}^N \backslash B_r} \frac{\left|U_{\eps}(x)\right|^{2_{\al, m}^*}\left|U_{\eps}(y)\right|^{2_{\al, m}^*}}{|x-y|^\al} \, dx \, dy \\
:= & \mathrm{A_1}-2 \mathrm{A_2}-\mathrm{A_3}.\notag
\end{align}
\noi We are going to estimate $\mathrm{A_1}, \mathrm{A_2}$ and $\mathrm{A_3}$. By direct computation and using the Hardy--Littlewood--Sobolev inequality, we know, for $\eps<1$,
\begin{equation*}
\mathrm{A_1}=\int_{\mb{R}^N} \int_{\mb{R}^N} \frac{\left|U_{\eps}(x)\right|^{2_{\al, m}^*}\left|U_{\eps}(y)\right|^{2_{\al, m}^*}}{|x-y|^\al}\, dx \, dy = C(N, \al)^{\frac{N}{2m}} S_{H, L}^{\frac{2N-\al}{2m}},
\end{equation*}
\begin{align*}
\mathrm{A_2} &=C_{N, m}^{\frac{(2N-\al)}{2m}}\eps^{2N-\al}
\int_{\mb{R}^N \setminus B_r} \int_{B_r}
\frac{1}{\left(\eps^2+|x|^2\right)^{\frac{2N-\al}{2}}
|x-y|^\al
\left(\eps^2+|y|^2\right)^{\frac{2N-\al}{2}}}\, dx \, dy \notag\\
& \leq O\left(\eps^{2N-\al}\right)\left(\int_{\mb{R}^N \setminus B_r} \frac{1}{\left(\eps^2+|x|^2\right)^{N}} d x\right)^{\frac{2N-\al}{2N}}\left(\int_{B_r} \frac{1}{\left(\eps^2+|y|^2\right)^{N}} d y\right)^{\frac{2N-\al}{2N}}\notag \\
& \leq O\left(\eps^{\frac{2N-\al}{2}}\right).
\end{align*}
Similarly, we obtain
\begin{equation}\label{e48}
\mathrm{A_3} \leq  O\left(\eps^{2N-\al}\right)
\int_{\mb{R}^N \setminus B_r} \int_{\mb{R}^N \setminus B_r}\frac{1}{\left(\eps^2 + |x|^2\right)^{\frac{(2N-\al)}{2}}|x-y|^\al\left(\eps^2 + |y|^2\right)^{\frac{2N-\al}{2}}}\, dx \, dy \quad \leq O\left(\eps^{2N-\al}\right).
\end{equation}
It follows from \eqref{e45}- \eqref{e48} that
\begin{equation}\label{e52}
\int_{\Om} \int_{\Om} \frac{\left|u_{\eps}(x)\right|^{2_{\al, m}^*}\left|u_{\eps}(y)\right|^{2_{\al, m}^*}}{|x-y|^\al}\, dx \, dy \geq C(N, \al)^{\frac{N}{2m}} S_{H, L}^{\frac{2N-\al}{2m}}-O\left(\eps^{\frac{2N-\al}{2}}\right).
\end{equation}\\ 
\item
We consider,
\[\int_\Om|u_\eps|dx=\int_{B_r}|u_\eps|dx+\int_{\Om\setminus B_r}|u_\eps|dx=I_1+I_2,\]
where
\[I_2=\int_{\Om\setminus B_r} \frac{C^\frac{N-2m}{4m}_{N, m}{\eps^\frac{N-2m}{2}}}{(\eps^2+|x|^2)^\frac{N-2m}{2}}dx= O\left({\eps^\frac{N-2m}{2}}\right)\int_{\Om\setminus B_r}\frac{1}{|x|^{N-2m}}dx=O\left({\eps^\frac{N-2m}{2}}\right)\]
and
\noi \[I_1=\int_{B_r}|u_\eps|dx\\
=\int_{B(0,\eps)}|U_\eps|dx+\int_{\eps<|x|<r}|U_\eps|dx= O\left({\eps^\frac{N-2m}{2}}\right).\]
Hence,
\begin{equation} \label{e53}
\int_\Om|u_\eps|dx= O\left({\eps^\frac{N-2m}{2}}\right).    
\end{equation}\\
\item
\begin{align*}
\int_\Om|u_\eps|^{q+1}dx&\geq\int_{B_r}|U_\eps|^{q+1}dx\\
&= O\left(\eps^{\frac{(N-2m)(q+1)}{2}}\right) \int_0^r \frac{r^{N-1}}{(\eps^2+|r|^2)^{\frac{(N-2m)(q+1)}{2}}} \, dr\\
&=O\left(\eps^{N-{\frac{(N-2m)(q+1)}{2}}}\right).
\end{align*} 
Hence, \[\int_\Om|u_\eps|^{q+1}dx\geq O\left(\eps^{N-{\frac{(N-2m)(q+1)}{2}}}\right).\]
\end{enumerate}
\end{proof}



\section{\bf{Perturbation with a superlinear local term}}
\noi In this section, we study the  following critical Choquard equation, \\
\begin{equation*}
 \left\{
 \begin{aligned}
(-\Delta)^m u &= \left(\DD\int_{\Om}\frac{|u|^{2_{\al,m}^{*}}}{|x-y|^{\al}}dy\right)  {|u|^{2_{\al,m}^{*}-2} u}+ \la u^q&& \; \text{in}\; \Om,\\
 \nabla^k u&=0\; \forall \; |k|\leq m-1 &&\; \text{on} \, \partial\Om,
\end{aligned}\right.
\end{equation*}
and established the existence of nontrivial solutions under suitable conditions, by employing the Mountain Pass geometry and compactness below a critical level.\\
For this, we first define the energy functional $\mc I_{\la}:\mb{H}_0^{m}(\Om)\to \mb{R}$ corresponding to the problem \eqref{e5} as\\
\begin{align*} 
\mc I_\la(u)= \frac{1}{2}\int_{\Om}|D^{m}u|^2 dx -\frac{\la}{q+1}\int_{\Om}|u|^{q+1} dx-\frac{1}{22^*_{\al, m}}\int_{\Om}\int_{\Om} \frac{|u(x)|^{2_{\al, m}^{*}}|u(y)|^{2_{\al, m}^{*}}}{|x-y|^{\al}} \, dx \, dy.
\end{align*}
Then $\mc{I}_{\la}$ is well defined in $C^1(\mb{H}_0^{m}(\Om))$. Moreover, the weak solution of  the problem \eqref{e5} is the critical point of $\mc{I}_{\la}$, given by
\begin{align*}
    \langle \mc{I}_\la^{\prime}(u), \phi \rangle=\int_{\Om}D^{m}u D^{m} \phi dx -\la\int_{\Om}|u|^{q-2}u{\phi} dx-\int_{\Om}\int_{\Om} \frac{|u(x)|^{2_{\al, m}^{*}}|u(y)|^{2_{\al, m}^{*}-2}u(y){\phi(y)}}{|x-y|^{\al}} \, dx \, dy=0,\quad \forall \phi \in C_c^{\infty}(\Om),
\end{align*}
where, \[\int_{\Om}D^{m}u D^{m} \phi dx =\begin{cases}\DD\int_{\Om}(-\De)^{\frac m2}u  (-\De)^{\frac m2}\phi \,dx  & \text { if } m \text { is even },\\ \\
\DD\int_{\Om}\na(-\De)^{\frac {m-1}{2}}u \cdot\na (-\De)^{\frac{ m-1}{2}}\phi \,dx &\text { if } m \text { is odd }. \\
\end{cases}\]
\noi Now, we show that $\mc{I}_{\la}$ satisfies the Mountain Pass Geometry, i.e.,
\begin{lemma}\label{l1}
If $1<q<2^{*}_{m}-1$ and ${\la}>0$, then
\item[(1)] There exist $\mu, \rho>0$ such that $\mc I_\la(u)\geq \mu$ for $\|u\|=\rho$.
\item[(2)] There exist $e\in \mb{H}^m_0(\Om)$ with $\|e\|>{\rho}$ such that $\mc I_\la(e)<0$.
\end{lemma}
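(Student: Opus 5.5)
For part (1) I would bound the two nonlinear terms of $\mc I_\la$ from above by powers of $\|u\|$, both higher than the quadratic one. Since $1<q<2^*_m-1$, we have $2<q+1<2^*_m$. The embedding $\mb H^m_0(\Om)\hookrightarrow L^{q+1}(\Om)$ is continuous because $\Om$ is bounded, and Hölder's inequality together with the definition of $S$ then gives
\[
\int_\Om |u|^{q+1}\,dx\le C_1\|u\|^{q+1}.
\]
For the Choquard term I would use the definition of $S_{H,L}$ in \eqref{e4}. Extend $u\in\mb H^m_0(\Om)$ by zero to get an element of $\mb H^m_0(\mb R^N)$ (equivalently, use \eqref{e3} together with the Sobolev inequality). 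This gives
\[
\int_\Om\int_\Om \frac{|u(x)|^{2^*_{\al,m}}|u(y)|^{2^*_{\al,m}}}{|x-y|^\al}\,dx\,dy\le S_{H,L}^{-2^*_{\al,m}}\|u\|^{22^*_{\al,m}}.
\]
Combining the two bounds,
\[
\mc I_\la(u)\ge \frac12\|u\|^2-\frac{\la C_1}{q+1}\|u\|^{q+1}-\frac{1}{22^*_{\al,m}}S_{H,L}^{-2^*_{\al,m}}\|u\|^{22^*_{\al,m}}.
\]
Both exponents $q+1$ and $22^*_{\al,m}$ exceed $2$; for the second this holds because $2^*_{\al,m}=\frac{2N-\al}{N-2m}>1$ when $\al<N$. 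Hence for fixed $\la>0$ the function $g(t)=\frac12t^2-C\la t^{q+1}-C't^{22^*_{\al,m}}$ is positive for small $t>0$. Choosing $\rho>0$ small and setting $\mu=g(\rho)>0$ gives $\mc I_\la(u)\ge\mu$ whenever $\|u\|=\rho$.

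For part (2) I would fix any $v\in\mb H^m_0(\Om)\setminus\{0\}$ and look along the ray $tv$ with $t>0$:
\[
\mc I_\la(tv)=\frac{t^2}{2}\|v\|^2-\frac{\la t^{q+1}}{q+1}\int_\Om|v|^{q+1}dx-\frac{t^{22^*_{\al,m}}}{22^*_{\al,m}}\int_\Om\int_\Om\frac{|v(x)|^{2^*_{\al,m}}|v(y)|^{2^*_{\al,m}}}{|x-y|^\al}\,dx\,dy.
\]
The double integral is strictly positive because $v\neq0$ and the kernel is positive, and the middle term is nonpositive since $\la>0$. As $22^*_{\al,m}>2$, the last term dominates, so $\mc I_\la(tv)\to-\infty$ as $t\to\infty$. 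Picking $t_0$ large enough that both $t_0\|v\|>\rho$ and $\mc I_\la(t_0v)<0$, I would take $e=t_0v$. A natural choice is $v=u_\eps$ from \eqref{e10}, since the later energy estimates are done with it, but any nonzero $v$ works.

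I do not expect a real obstacle here. The only point needing care is the Choquard estimate in part (1): one has to check that the constant does not depend on $\Om$, which is why I would pass through $\mb H^m_0(\mb R^N)$ via zero extension and \eqref{e4}, and that the exponent $22^*_{\al,m}$ is strictly greater than $2$.
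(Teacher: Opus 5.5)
Your proposal is correct and follows the paper's own proof essentially step for step. Part (1) uses the same lower bound $\frac12\|u\|^2-\frac{\lambda C}{q+1}\|u\|^{q+1}-\frac{C'}{2\cdot 2^*_{\alpha,m}}\|u\|^{2\cdot 2^*_{\alpha,m}}$, obtained from the Sobolev embedding and the definition of $S_{H,L}$ in \eqref{e4}, and part (2) uses the same ray argument $\mathcal{I}_\lambda(tw)\to-\infty$. Your explicit checks that $2\cdot 2^*_{\alpha,m}>2$ and that $t_0$ can be chosen with $t_0\|v\|>\rho$ are details the paper leaves implicit.
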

\begin{proof} For $u\in\mb{H}_0^{m}(\Om)\setminus\{0\}$, we have 
$\left({ \int_\Om \int_\Om \frac{|u(x)|^{2_{\al, m}^*}|u(y)|^{2_{\al, m}^*}}{|x - y|^\al} \,dx \,dy } \right)^ \frac{1}{2_{\al, m}^*}
\leq S_{H, L}^{-1}\|u\|^2$ from \eqref{e4} and using the Sobolev embedding theorem for all, we obtain
\[\mc I_\la(u)\geq \frac{1}{2}\|u\|^2  -\frac{\la}{q+1}C_4 \|u\|^{q+1}-\frac{1}{22^*_{\al, m}}C_5\|u\|^{22^*_{\al, m}}.\]
Since $2<q+1<2^*_{m}<{22^*_{\al, m}}$, we can choose some $\rho>0$ small enough such that $\mc I_\la(u)\geq\mu>0$ for all $\|u\|=\rho$.
This complete the proof of $(1)$.\\
For $w\in\mb{H}_0^{m}(\Om)\setminus\{0\}$, we have
\[\mc I_\la(tw)= \frac{t^2}{2}\int_{\Om}|D^{m}w|^2 dx -\frac{\la t^{q+1}}{q+1}\int_{\Om}|w|^{q+1} dx-\frac{t^{22^*_{\al, m}}}{22^*_{\al, m}}\int_{\Om}\int_{\Om} \frac{|w(x)|^{2_{\al, m}^{*}}|w(y)|^{2_{\al,m}^{*}}}{|x-y|^{\al}}\,dx \,dy <0,\]
for $t>0$ large enough. Then we observe that $\mc I_\la(tw)<0$ by taking $e:=t_1w$ for large $t_1>0$, (2) follows.
\end{proof}
\begin{lemma}\label{l2}
Let $1<q<2^*_{m}-1$, $\la>0$. If $\{u_n\}$ is a $(PS)_c$ sequence of $\mc{I_\la}$, then
\begin{enumerate}
    \item $\{u_n\}$ is bounded.
    \item There exist $u_0\in \mb{H}^m_0(\Om)$  such that $u_0$  is a weak solution of the above problem \eqref{e5}.
    \item $\mc{I}_\la(u_0)\geq 0.$
\end{enumerate}\end{lemma}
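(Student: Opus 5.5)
For (1) I will take the standard combination $\mc I_\la(u_n)-\frac{1}{q+1}\langle \mc I_\la'(u_n),u_n\rangle$. Write $\|u\|_{NL}^{22^*_{\al,m}}$ for the double integral. Then
\[
\Big(\frac12-\frac{1}{q+1}\Big)\|u_n\|^2+\Big(\frac{1}{q+1}-\frac{1}{22^*_{\al,m}}\Big)\|u_n\|_{NL}^{22^*_{\al,m}}=\mc I_\la(u_n)-\frac{1}{q+1}\langle \mc I_\la'(u_n),u_n\rangle \le c+1+o(1)\|u_n\|.
\]
Since $q+1>2$ and $q+1<2^*_m<22^*_{\al,m}$, both coefficients on the left are positive. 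Dropping the nonnegative nonlocal term gives $\|u_n\|^2\le C+o(1)\|u_n\|$, so $\{u_n\}$ is bounded in $\mb H^m_0(\Om)$.

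For (2) I use boundedness and reflexivity to pass to a subsequence with $u_n\weak u_0$ in $\mb H^m_0(\Om)$. By the compact embedding for $q+1<2^*_m$, $u_n\to u_0$ in $L^{q+1}(\Om)$, and $u_n\to u_0$ a.e. Fix $\phi\in C_c^\infty(\Om)$. The linear term $\int D^m u_n D^m\phi$ converges by weak convergence. The local term $\la\int|u_n|^{q-1}u_n\phi$ converges by strong $L^{q+1}$ convergence, or by Vitali.

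The main obstacle is the nonlocal term. Since $\{u_n\}$ is bounded in $L^{2^*_m}$, the sequence $|u_n|^{2^*_{\al,m}}$ is bounded in $L^{\frac{2N}{2N-\al}}$ and converges a.e. to $|u_0|^{2^*_{\al,m}}$, hence weakly in that space. The Riesz potential is a bounded linear map $L^{\frac{2N}{2N-\al}}\to L^{\frac{2N}{\al}}$ by the Hardy--Littlewood--Sobolev inequality \eqref{e2}, so $|x|^{-\al}*|u_n|^{2^*_{\al,m}}\weak |x|^{-\al}*|u_0|^{2^*_{\al,m}}$ in $L^{\frac{2N}{\al}}$. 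Similarly $|u_n|^{2^*_{\al,m}-2}u_n\weak |u_0|^{2^*_{\al,m}-2}u_0$ in $L^{\frac{2N}{N-\al+2m}}$. The product of two weakly convergent sequences need not converge, so I pair them differently. For fixed $\phi$, the function $g_n:=|u_n|^{2^*_{\al,m}-2}u_n\phi$ converges to $g_0:=|u_0|^{2^*_{\al,m}-2}u_0\phi$ strongly in $L^{\frac{2N}{2N-\al}}$. This follows from Vitali's theorem: a.e. convergence, boundedness of $\phi$, and the exponent $\frac{2N}{2N-\al}\cdot\frac{N+2m-\al}{N-2m}\cdot\frac{N-2m}{2N}<\frac{N+2m-\al}{2N-\al}$ coming from the higher integrability bound in $L^{2^*_m}$, which gives uniform integrability. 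The symmetric form of the double integral then gives
\[
\int_\Om\big(|x|^{-\al}*|u_n|^{2^*_{\al,m}}\big)g_n\,dx\to\int_\Om\big(|x|^{-\al}*|u_0|^{2^*_{\al,m}}\big)g_0\,dx,
\]
as the pairing of a weakly convergent sequence with a strongly convergent one. Hence $\langle\mc I_\la'(u_0),\phi\rangle=0$, and $u_0$ is a weak solution.

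For (3), testing the equation with $u_0$ gives $\langle\mc I_\la'(u_0),u_0\rangle=0$. Repeating the identity from (1) with $u_0$ in place of $u_n$ yields
\[
\mc I_\la(u_0)=\Big(\frac12-\frac{1}{q+1}\Big)\|u_0\|^2+\Big(\frac{1}{q+1}-\frac{1}{22^*_{\al,m}}\Big)\|u_0\|_{NL}^{22^*_{\al,m}}\ge0 .
\]
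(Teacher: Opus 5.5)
Your proposal is correct and follows the paper's proof: the same combination $\mc I_\la(u_n)-\frac{1}{q+1}\langle \mc I_\la'(u_n),u_n\rangle$ gives boundedness, the weak limit is passed through the weak formulation, and testing with $u_0$ gives (3). The only variation there is that you subtract $\frac{1}{q+1}\langle \mc I_\la'(u_0),u_0\rangle$ where the paper subtracts $\frac12\langle \mc I_\la'(u_0),u_0\rangle$, which makes no difference. For the Choquard term you pair the weakly convergent Riesz potential in $L^{\frac{2N}{\al}}$ with the strongly convergent $g_n$ in $L^{\frac{2N}{2N-\al}}$; this is a cleaner justification than the paper's, which only ``combines'' two weak convergences to claim weak convergence of their product. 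Fix the typo in your exponent check: the left-hand side equals $\frac{N+2m-\al}{2N-\al}$, so it should read $=\frac{N+2m-\al}{2N-\al}<1$, not a strict inequality with itself.
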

\begin{proof} Let $\{u_n\}$ is a $(PS)_c$ sequence of $\mc{I_\la}$. Then
 there exists $C^{\prime} >0$, such that\[\left|\mc{I_\la}(u_n)\right|\leq C^{\prime}\quad and \quad \left|\left\langle \mc I_\la^{\prime}( u_n),\frac{u_n }{\| u_n\|}\right\rangle\right|\leq C^{\prime}.\]
\begin{enumerate}
 \item  Consider
\begin{align*}
C^{\prime}(1+\|u_n\|)& \geq  \mc{I_\la}(u_n)- \frac{1}{q+1}\langle \mc I_\la^{\prime}(u_n),{u_n }\rangle\\
&=\left({\frac{1}{2}}-{\frac{1}{q+1}}\right)\int_{\Om}|D^{m}u_n|^2 dx + \left({\frac{1}{q+1}}-\frac{1}{22^*_{\al, m}}\right)\int_{\Om}\int_{\Om} \frac{|u_n(x)|^{2_{\al, m}^{*}}|u_n(y)|^{2_{\al,m}^{*}}}{|x-y|^{\al}} dx \\ 
&\geq \left({\frac{1}{2}}-{\frac{1}{q+1}}\right)\|u_n\|^2.\\
\end{align*}
Since $2<q+1<{2^*_{\al, m}}$. Thus $\{u_n\}$ is bounded in $\mb{H}^m_0(\Om)$.\\
\item Upto a subsequence, there exists $u_0\in\mb{H}_0^{m}(\Om)$ such that $u_n \weak u_0$ in $\mb{H}_0^{m}(\Om)$ and $u_n \weak u_0$ in $L^{2^*_m}(\Om)$ as $n\to+\infty$. Then
\[|u_n|^{2_{\al, m}^{*}}\weak|u_0|^{2_{\al, m}^{*}} \text{ in }   L^\frac{2N}{2N-\al}(\Om),\]\\
as $n\to+\infty$. Using the Hardy-Littlewood-Sobolev inequality, the Riesz potential defines a linear continuous map from $L^\frac{2N}{2N-\al}(\Om)$ to $L^\frac{2N}{\al}(\Om)$, hence
\[|x|^{-\al}*|u_n|^{2_{\al, m}^{*}} \weak|x|^{-\al}*|u_0|^{2_{\al, m}^{*}}\text{  weakly in }  L^\frac{2N}{\al}(\Om),\]\\
as $n\to\infty$. Now, combining this with the fact that
\[|u_n|^{2_{\al, m}^{*}-2}u_n\weak|u_0|^{2_{\al, m}^{*}-2}u_0\, \text{weakly in } L^\frac{2N}{N+2m-\al}(\Om),\]\\
as $n\to\infty$, we obtain
\[(|x|^{-\al}*|u_n|^{2_{\al, m}^{*}})|u_n|^{2_{\al, m}^{*}-2}u_n \weak
(|x|^{-\al}*|u_0|^{2_{\al, m}^{*}})|u_0|^{2_{\al, m}^{*}-2}u_0 \text{ weakly in }  L^\frac{2N}{N+2}(\Om),\]\\
as $n\to\infty$. Since for any $\phi\in C_c^{\infty}(\Om)$,
\[0\leftarrow\langle \mc I_\la^{\prime}(u_n), \phi\rangle=\int_{\Om}D^{m}u_n D^{m}\phi dx -\la\int_{\Om}|u_n|^{q-2}u_n{\phi} dx-\int_{\Om}\int_{\Om} \frac{|u_n(y)|^{2_{\al, m}^{*}}|u_n(x)|^{2_{\al, m}^{*}-2}u_n(x){\phi(x)}}{|x-y|^{\al}} \, dx \, dy.\]\\
\noi Taking the limit as $n\to\infty$, we obtain
\[\int_{\Om}D^{m}u_0 D^{m}\phi dx -\la\int_{\Om}|u_0|^{q-2}u_{0}{\phi} dx-\int_{\Om}\int_{\Om} \frac{|u_0(y)|^{2_{\al, m}^{*}}|u_0(x)|^{2_{\al, m}^{*}-2}u_0(x){\phi(x)}}{|x-y|^{\al}} \, dx \, dy=0,\]\\
for any $\phi\in\mb{H}_0^{m}(\Om)$, which means $u_0$ is a weak solution of the problem \eqref{e5}.\\
\item Finally, we take a test function $\phi=u_0\in\mb{H}_0^{m}(\Om)$ in \eqref{e5}, we have
\[\int_{\Om}|D^{m}u_0|^2dx =\la\int_{\Om}|u_0|^{q+1} dx+\int_{\Om}\int_{\Om} \frac{|u_0(x)|^{2_{\al, m}^{*}}|u_n(y)|^{2_{\al, m}^{*}}}{|x-y|^{\al}} \, dx \, dy\]
and so,
\[\mc{I_\la}(u_0)=\left({\frac{\la}{2}}-{\frac{\la}{q+1}}\right)\int_{\Om}|u_0|^{q+1} dx + \frac{N+2m-\al}{4N-2\al}\int_{\Om}\int_{\Om} \frac{|u_0(x)|^{2_{\al, m}^{*}}|u_0(y)|^{2_{\al,m}^{*}}}{|x-y|^{\al}} \, dx \, dy\geq0.\]
\end{enumerate}
Which completes the proof of Lemma \ref{l2}.
\end{proof}

\noindent In the next lemma, we prove a convergence criterion for the $(PS)_c$ sequence below a critical level to prove the main result.\\
\begin{lemma}\label{l4}
Let $\{u_n\}$ be a $(PS)_c$ sequence of $\mc{I_\la}$ with
$c<\frac{N+2m-\al}{4N-2\al} S_{H, L}^{\frac{2N-\al}{N+2m-\al}}$. Then $\{u_n\}$ has a convergent subsequence.
\end{lemma}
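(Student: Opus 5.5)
Since $\{u_n\}$ is a $(PS)_c$ sequence, Lemma \ref{l2} tells us that it is bounded in $\mb{H}^m_0(\Om)$. We may therefore pass to a subsequence with $u_n\weak u_0$ in $\mb{H}^m_0(\Om)$, $u_n\to u_0$ in $L^{p}(\Om)$ for every $p<2^*_m$ (Rellich--Kondrachov on the bounded domain), and $u_n\to u_0$ a.e. in $\Om$. By Lemma \ref{l2}, $u_0$ is a weak solution and satisfies $\mc I_\la(u_0)\geq 0$.

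Set $v_n=u_n-u_0$. We show $v_n\to0$ in norm by a Brezis--Lieb splitting.
\begin{enumerate}
\item Weak convergence in the Hilbert space gives $\|u_n\|^2=\|u_0\|^2+\|v_n\|^2+o(1)$.
\item Since $q+1<2^*_m$, compactness gives $\int_\Om|u_n|^{q+1}\to\int_\Om|u_0|^{q+1}$.
\item Lemma \ref{l3} splits the Choquard term: $\int_\Om(|x|^{-\al}*|u_n|^{2^*_{\al,m}})|u_n|^{2^*_{\al,m}}=\int_\Om(|x|^{-\al}*|u_0|^{2^*_{\al,m}})|u_0|^{2^*_{\al,m}}+\int_\Om(|x|^{-\al}*|v_n|^{2^*_{\al,m}})|v_n|^{2^*_{\al,m}}+o(1)$.
\end{enumerate}
Inserting these into $\mc I_\la(u_n)\to c$ gives
\[\mc I_\la(u_0)+\tfrac12\|v_n\|^2-\tfrac{1}{22^*_{\al,m}}\int_\Om(|x|^{-\al}*|v_n|^{2^*_{\al,m}})|v_n|^{2^*_{\al,m}}=c+o(1).\]
Next we use $\langle\mc I_\la'(u_n),u_n\rangle\to0$ together with $\langle \mc I_\la'(u_0),u_0\rangle=0$. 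The corresponding nonlocal splitting for the derivative term, tested against $u_n$, reduces to the same Brezis--Lieb identity. This yields
\[\|v_n\|^2-\int_\Om(|x|^{-\al}*|v_n|^{2^*_{\al,m}})|v_n|^{2^*_{\al,m}}=o(1).\]

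Passing to a further subsequence, let $\|v_n\|^2\to b\geq0$; then the double integral of $v_n$ also tends to $b$. Suppose $b>0$. The definition \eqref{e4} of $S_{H,L}$, together with $\mb{H}^m_0(\Om)\subset\mb{H}^m_0(\mb{R}^N)$, gives $\|v_n\|^2\geq S_{H,L}\bigl(\int\!\!\int\cdots\bigr)^{1/2^*_{\al,m}}$. In the limit, $b\geq S_{H,L}\, b^{1/2^*_{\al,m}}$, so $b\geq S_{H,L}^{\frac{2N-\al}{N+2m-\al}}$, since $2^*_{\al,m}/(2^*_{\al,m}-1)=\frac{2N-\al}{N+2m-\al}$. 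The energy identity then gives
\[c=\mc I_\la(u_0)+\Bigl(\tfrac12-\tfrac{1}{22^*_{\al,m}}\Bigr)b\geq \frac{N+2m-\al}{2(2N-\al)}\,S_{H,L}^{\frac{2N-\al}{N+2m-\al}},\]
using $\mc I_\la(u_0)\geq0$ and $\tfrac12-\tfrac{1}{22^*_{\al,m}}=\frac{N+2m-\al}{2(2N-\al)}=\frac{N+2m-\al}{4N-2\al}$. This contradicts the assumption on $c$, so $b=0$, i.e. $u_n\to u_0$ strongly in $\mb{H}^m_0(\Om)$.

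The step needing most care is the derivative splitting. We must justify $\int_\Om(|x|^{-\al}*|u_n|^{2^*_{\al,m}})|u_n|^{2^*_{\al,m}-2}u_n u_0\to\int_\Om(|x|^{-\al}*|u_0|^{2^*_{\al,m}})|u_0|^{2^*_{\al,m}}$. This follows from the weak convergence $(|x|^{-\al}*|u_n|^{2^*_{\al,m}})|u_n|^{2^*_{\al,m}-2}u_n\weak(|x|^{-\al}*|u_0|^{2^*_{\al,m}})|u_0|^{2^*_{\al,m}-2}u_0$ in the dual of $L^{2^*_m}(\Om)$, established in the proof of Lemma \ref{l2}. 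Alternatively, it is cleaner to write $\langle\mc I'_\la(u_n),u_n\rangle-\langle\mc I'_\la(u_n),u_0\rangle\to0$ and combine it with Lemma \ref{l3}.
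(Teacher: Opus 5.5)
Your proposal is correct and follows essentially the same route as the paper. It splits both the energy and $\langle \mc{I}_\la^{\prime}(u_n),u_n\rangle-\langle \mc{I}_\la^{\prime}(u_0),u_0\rangle$ via Brezis--Lieb and Lemma \ref{l3}, obtains a common limit $b$ for $\|v_n\|^2$ and the Choquard term of $v_n$, and rules out $b\geq S_{H,L}^{\frac{2N-\al}{N+2m-\al}}$ using $\mc{I}_\la(u_0)\geq 0$ and the bound on $c$. Your closing worry about the derivative splitting is unnecessary: testing $\mc{I}_\la^{\prime}(u_n)$ against $u_n$ itself yields exactly the full double integral, so Lemma \ref{l3} applies verbatim.
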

\begin{proof}
Let $u_0$ be the weak limit of $\{u_n\}$ that we have already obtained in Lemma \ref{l2}. We define $v_n:=u_n-u_0$, then we know $v_n \weak 0$ in $\mb{H}_0^m(\Om)$, $v_n \to 0$ in $L^s(\Om)$, where $1<s<2_m^*$ and $v_n \to 0$ a.e in $\Om$. Moreover, by the Brezis--Lieb Lemma in \cite{3brezis1983relation} and Lemma \ref{l3}, we know
\begin{align*}
&\int_{\Om}\left|D^m u_n\right|^2 d x=\int_{\Om}\left|D^mv_n\right|^2 d x+\int_{\Om}\left|D^m u_0\right|^2 d x+o_n(1), \\
&\int_{\Om}\left|u_n\right|^{q+1} d x=\int_{\Om}\left|v_n\right|^{q+1} d x+\int_{\Om}\left|u_0\right|^{q+1} d x+o_n(1)
\end{align*}
and
\begin{align*}
\int_{\Om} \int_{\Om} \frac{\left|u_n(x)\right|^{2_{\al,m}^{*}}\left|u_n(y)\right|^{2_{\al,m}^{*}}}{|x-y|^\al}\, dx \, dy=\int_{\Om} \int_{\Om} \frac{\left|v_n(x)\right|^{2_{\al, m}^{*}}\left|v_n(y)\right|^{2_{\al, m}^{*}}}{|x-y|^\al} \, dx \, dy+\int_{\Om} \int_{\Om} \frac{\left|u_0(x)\right|^{2_{\al, m}^{*}}\left|u_0(y)\right|^{2_{\al, m}^{*}}}{|x-y|^\al} \, dx \, dy+o_n(1).\end{align*}
Consequently, we have
\begin{align}\label{e36}
c = \lim _{n \to \infty}\mc{I}_\la(u_n)= & \frac{1}{2} \int_{\Om}|D^m u_n|^2 d x-\frac{\la}{q+1} \int_{\Om} |u_n|^{q+1} d x-\frac{1}{{2}{2_{\al, m}^*}} \int_{\Om} \int_{\Om} \frac{\left|u_n(x)\right|^{2_{\al, m}^*}\left|u_n(y)\right|^{2_{\al, m}^*}}{|x-y|^\al} \, dx \, dy\notag \\
= & \frac{1}{2} \int_{\Om}\left|D^mv_n\right|^2 d x-\frac{\la}{q+1} \int_{\Om} |v_n|^{q+1} d x+\frac{1}{2} \int_{\Om}\left|D^m u_0\right|^2 d x-\frac{\la}{q+1} \int_{\Om} |u_0|^{q+1} d x \notag\\
&  \quad -\frac{1}{{2}{2_{\al, m}^*}} \int_{\Om} \int_{\Om} \frac{\left|v_n(x)\right|^{2_{\al, m}^*}\left|v_n(y)\right|^{2_{\al, m}^*}}{|x-y|^\al} \, dx \, dy-\frac{1}{22_{\al, m}^*} \int_{\Om} \int_{\Om} \frac{\left|u_0(x)\right|^{2_{\al, m}^*}\left|u_0(y)\right|^{2_{\al, m}^*}}{|x-y|^\al} \, dx \, dy +o_n(1)\notag \\
= & \mc{I}_\la(u_0)+\frac{1}{2} \int_{\Om}\left|D^m v_n\right|^2 d x-\frac{\la}{q+1} \int_{\Om} |v_n|^{q+1} d x-\frac{1}{22_{\al, m}^*} \int_{\Om} \int_{\Om} \frac{|v_n(x)|^{2_{\al, m}^*}|v_n(y)|^{2_{\al, m}^*}}{|x-y|^\al} \, dx \, dy+o_n(1)\notag \\
\geq & \frac{1}{2} \int_{\Om}\left|D^m v_n\right|^2 d x-\frac{1}{22_{\al, m}^*} \int_{\Om} \int_{\Om} \frac{|v_n(x)|^{2_{\al, m}^*}|v_n(y)|^{2_{\al, m}^*}}{|x-y|^\al} \, dx \, dy+o_n(1),
\end{align}
since $\mc{I_\la}(u_0) \geq 0$ and $\int_{\Om} |v_n|^{q+1} d x \rightarrow 0$, as $n \rightarrow + \infty$. Similarly, since $\left\langle \mc{I_\la}^{\prime}(u_0), u_0\right\rangle=0$,
we have
\begin{align}\label{e37}
o_n(1)= & \left\langle \mc{I}_\la^{\prime}\left(u_n\right), u_n\right\rangle -\left\langle \mc{I}_\la^{\prime}\left(u_0\right), u_0\right\rangle\notag\\
= &\int_{\Om}\left|D^m (u_n -u_0)\right|^2 d x-\la \int_{\Om} |u_n -u_0|^{q+1} d x-\int_{\Om} \int_{\Om} \frac{\left|u_n(x)-u_0(x)\right|^{2_{\al,m}^{*}}\left|u_n(y)-u_0(y)\right|^{2_{\al,m}^{*}}}{|x-y|^\al} \, dx \, dy+o_n(1)\notag \\
= & \int_{\Om}\left|D^m v_n\right|^2 d x-\int_{\Om} \int_{\Om} \frac{\left|v_n(x)\right|^{2_{\al,m}^{*}}\left|v_n(y)\right|^{2_{\al,m}^{*}}}{|x-y|^\al} \, dx \, dy+o_n(1).
\end{align}
From \eqref{e37}, we know there exists a non-negative constant ${\beta}$ such that
\[\int_{\Om}\left|D^m v_n\right|^2 d x \rightarrow {\beta}\]
and
\[\int_{\Om} \int_{\Om} \frac{\left|v_n(x)\right|^{2_{\al, m}^*}\left|v_n(y)\right|^{2_{\al, m}^*}}{|x-y|^\al} \, dx \, dy \rightarrow {\beta},\]
as $n \rightarrow+\infty$. Thus from \eqref{e36}, we obtain
\begin{equation}\label{e8}
c \geq \frac{N+2m-\al}{4N-2\al} {\beta}.
\end{equation}
By the definition of the best constant $S_{H, L}$ in \eqref{e4}, we have
\[S_{H, L}\left(\int_{\Om} \int_{\Om} \frac{\left|v_n(x)\right|^{2_{\al, m}^*}\left|v_n(y)\right|^{2_{\al, m}^{*}}}{|x-y|^\al} \, dx \, dy\right)^{\frac{N-2m}{2N-\al}} \leq \int_{\Om}\left|D^m v_n\right|^2 d x,\]
which yields ${\beta} \geq S_{H, L} {\beta}^{\frac{N-2m}{2N-\al}}$. Thus we have either ${\beta}=0$ or ${\beta} \geq S_{H, L}^{\frac{2N-\al}{N+2m-\al}}$. If ${\beta}\geq S_{H, L}^{\frac{2N-\al}{N+2m-\al}}$, then we obtain from \eqref{e8} that
\[\frac{N+2m-\al}{4N-2\al} S_{H, L}^{\frac{2N-\al}{N+2m-\al}} \leq \frac{N+2m-\al}{4N-2\al} {\beta} \leq c,\]
which contradicts with the fact that $c<\frac{N+2m-\al}{4 N-2 \al} S_{H, L}^{\frac{2N-\al}{N+2m-\al}}$. Thus $\beta=0$, and
\[\left\|u_n-u_0\right\| \rightarrow 0,\]
as $n \rightarrow+\infty$.
\end{proof}
\begin{lemma}\label{l5}
There exists $u_{\eps}$ such that
\[\sup _{t \geq 0} \mc{I_\la}(t u_\eps)<\frac{N+2m-\al}{4N-2\al} S_{H, L}^{\frac{2N-\al}{N+2m-\al}},\]
provided that either
\begin{enumerate}
\item  $N>\max \left\{\min \left\{\frac{2m(q+3)}{q+1}, 2m+\frac{\al}{q+1}\right\}, \frac{2m(q+1)}{q}\right\}$ and $\la>0$, or
\item  $N \leq \max \left\{\min \left\{\frac{2m(q+3)}{q+1}, 2m+\frac{\al}{q+1}\right\}, \frac{2m(q+1)}{q}\right\}$ and $\la$ is sufficiently large.
\end{enumerate}
\end{lemma}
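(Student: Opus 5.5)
We will test the mountain pass functional on the truncated extremals $u_\eps=\phi U_\eps$ from \eqref{e10}. We will use the estimates \eqref{e11}, the lower bound for the Choquard term in part (1) of the preceding lemma, and the lower bound $\int_\Om u_\eps^{q+1}dx\geq C\eps^{N-\frac{(N-2m)(q+1)}{2}}$ from part (3). To lighten notation, write
\[
A_\eps=\int_\Om|D^mu_\eps|^2dx,\qquad B_\eps=\int_\Om\int_\Om\frac{|u_\eps(x)|^{2^*_{\al,m}}|u_\eps(y)|^{2^*_{\al,m}}}{|x-y|^\al}\,dx\,dy .
\]
Define $g(t)=\frac{t^2}{2}A_\eps-\frac{t^{22^*_{\al,m}}}{22^*_{\al,m}}B_\eps$. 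Then $\mc I_\la(tu_\eps)=g(t)-\frac{\la t^{q+1}}{q+1}\int_\Om u_\eps^{q+1}dx$.

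\emph{Step 1 (the sup is attained at a controlled $t$).} Since $\mc I_\la(tu_\eps)\to-\infty$ as $t\to\infty$ and is positive for small $t$, the supremum is attained at some $t_\eps>0$. From $\frac{d}{dt}\mc I_\la(tu_\eps)=0$ we get $t_\eps^{22^*_{\al,m}-2}B_\eps\leq A_\eps$, so $t_\eps$ is bounded above uniformly for small $\eps$, because $A_\eps\to S^{N/2m}$ and $B_\eps$ stays bounded below. We also need a uniform lower bound $t_\eps\geq t_0>0$. Otherwise $\mc I_\la(t_\eps u_\eps)\to0$, which is below the threshold anyway, so that case is trivial.

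\emph{Step 2 (pure critical part).} Elementary calculus gives
\[
\max_{t\ge0}g(t)=\frac{N+2m-\al}{4N-2\al}\left(\frac{A_\eps}{B_\eps^{1/2^*_{\al,m}}}\right)^{\frac{2N-\al}{N+2m-\al}} .
\]
Insert $A_\eps=S^{N/2m}+O(\eps^{N-2m})$ and $B_\eps\geq C(N,\al)^{N/2m}S_{H,L}^{(2N-\al)/2m}-O(\eps^{(2N-\al)/2})$. Using $S_{H,L}=S\,C(N,\al)^{-1/2^*_{\al,m}}$, the leading ratio is exactly $S_{H,L}$, and a first-order expansion yields
\[
\max_{t\ge0} g\le \frac{N+2m-\al}{4N-2\al}S_{H,L}^{\frac{2N-\al}{N+2m-\al}}+O(\eps^{N-2m})+O(\eps^{\frac{2N-\al}{2}}).
\]
Define $\theta:=\min\{N-2m,\frac{2N-\al}{2}\}$, which is the error exponent.

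\emph{Step 3 (the perturbation wins).} Using $t_\eps\geq t_0$ together with part (3) of the lemma,
\[
\sup_{t\ge0}\mc I_\la(tu_\eps)\le \frac{N+2m-\al}{4N-2\al}S_{H,L}^{\frac{2N-\al}{N+2m-\al}}+C\eps^{\theta}-C'\la\,\eps^{N-\frac{(N-2m)(q+1)}{2}} .
\]
This lower bound on $\int u_\eps^{q+1}$ is valid when $(N-2m)(q+1)>N$, i.e.\ $N>\frac{2m(q+1)}{q}$. Below that range the integral is only bounded below by a constant (or a log), which helps even more, and that case will be handled separately. For the right-hand side to be below the threshold for small $\eps$, with any fixed $\la>0$, it suffices that
\[
N-\frac{(N-2m)(q+1)}{2}<\theta .
\]
Comparing with $N-2m$ gives $N>\frac{2m(q+3)}{q+1}$. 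Comparing with $\frac{2N-\al}{2}$ gives $(N-2m)(q+1)>\al$, i.e.\ $N>2m+\frac{\al}{q+1}$. Since $\theta$ is a minimum of two exponents, the condition on $N$ should combine the two comparisons into the $\min$ and $\max$ structure appearing in (1), together with $N>\frac{2m(q+1)}{q}$. Checking that this combination is exactly right is the main bookkeeping obstacle, and I would verify the inequalities case by case.

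\emph{Step 4 (case (2): fix $\eps$, let $\la$ grow).} Fix $\eps$ small and let $\la\to\infty$. Write $t_\la$ for the maximizer. From the derivative equation, $\la t_\la^{q-1}\int u_\eps^{q+1}\le A_\eps$, so $t_\la\to0$ as $\la\to\infty$. Hence $\sup_t\mc I_\la(tu_\eps)\le \frac{t_\la^2}{2}A_\eps\to0$, which lies below the positive threshold for $\la$ large. This part is routine.
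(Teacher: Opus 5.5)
\textbf{Gap: the deferred check in Step 3 fails.} Your Steps 1, 2 and 4 are essentially the paper's argument: test $tu_\eps$, bound the maximizer above and below, compare with the maximum of the pure critical part, and let $\la\to\infty$ for case (2). The problem is the verification you postpone in Step 3 as bookkeeping, because it cannot be carried out.

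Requiring $N-\frac{(N-2m)(q+1)}{2}<\theta=\min\{N-2m,\frac{2N-\al}{2}\}$ means the gain exponent must lie below \emph{both} error exponents. That is, you need $N>\frac{2m(q+3)}{q+1}$ \emph{and} $N>2m+\frac{\al}{q+1}$. So your argument proves case (1) only for $N>\max\{\frac{2m(q+3)}{q+1},2m+\frac{\al}{q+1},\frac{2m(q+1)}{q}\}$, whereas hypothesis (1) asks only for the $\min$ of the first two.

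When $\al<4m$ that $\min$ is $2m+\frac{\al}{q+1}$. For $\max\{2m+\frac{\al}{q+1},\frac{2m(q+1)}{q}\}<N<\frac{2m(q+3)}{q+1}$, hypothesis (1) holds, yet $\la\int_\Om u_\eps^{q+1}dx$ is of exact order $\la\eps^{N-\frac{(N-2m)(q+1)}{2}}=o(\eps^{N-2m})$. The $O(\eps^{N-2m})$ term in \eqref{e11} is a genuinely positive truncation error of exact order $\eps^{N-2m}$, not an artifact of crude bounds. (Since $|x|^{2m-N}$ is $m$-harmonic off the origin, cutting it off strictly increases its exterior Dirichlet energy.) So letting $\eps\to0$ pushes the supremum \emph{above} the threshold, and the claimed inequality is not available from this family for small $\la>0$. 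The range is nonempty: $m=2$, $N=5$, $q=5$, $\al=1$ satisfies (1) as stated (the right-hand side equals $\frac{24}{5}$), but the gain is of order $\eps^{2}$ against an error of order $\eps$.

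\textbf{The paper has the same hole.} Its proof ends with ``thanks to \dots\ $N>\min\{\frac{2m(q+3)}{q+1},2m+\frac{\al}{q+1}\}$'' without checking it. So the step you deferred exposes a defect in the statement rather than a step you failed to reproduce. What the method actually yields is (1) with $\max$ in place of $\min$. The complementary dimensions fall under case (2), which your Step 4 handles for every $N$.

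\textbf{A sharper version.} The Hardy--Littlewood--Sobolev bound $\mathrm{A_2}\le O(\eps^{\frac{2N-\al}{2}})$ is wasteful. Splitting $B_r$ into $B_{r/2}$ and the annulus gives $\mathrm{A_2}=O(\eps^{N})$, so the Choquard error is $o(\eps^{N-2m})$. The only condition then needed is $N>\frac{2m(q+3)}{q+1}$, which already implies $N>\frac{2m(q+1)}{q}$ for $q>1$. This matches the stated hypothesis only when $\al\ge 4m$.

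\textbf{A smaller error.} Your aside that for $(N-2m)(q+1)\le N$ the $L^{q+1}$ term ``helps even more'' is wrong. There it is of order $\eps^{\frac{(N-2m)(q+1)}{2}}$ (times $|\ln\eps|$ at equality), which is $o(\eps^{N-2m})$ since $q>1$. That range is genuinely excluded, not easier.
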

\begin{proof}

From equation \eqref{e11} and \eqref{e52}, we know that
\begin{align*}
\int_{\Om}\left|D^m u_{\eps}\right|^2 dx = C(N, \al)^{\frac{N(N-2m)}{2m(2N-\al)}} S_{H, L}^{\frac{N}{2m}}+O\left(\eps^{N-2m}\right)
\end{align*}
and
\begin{align}\label{e12}
\int_{\Om} \int_{\Om} \frac{\left|u_{\eps}(x)\right|^{2_{\al, m}^*}\left|u_{\eps}(y)\right|^{2_{\al, m}^*}}{|x-y|^\al} \, dx \, dy \geq C(N, \al)^{\frac{N}{2m}} S_{H, L}^{\frac{2N-\al}{2m}}-O\left(\eps^{\frac{2N-\al}{2}}\right).
\end{align}\\
\item[Case 1].
$N>\max \left\{\min \left\{\frac{2m(q+3)}{q+1}, 2m+\frac{\al}{q+1}\right\}, \frac{2m(q+1)}{q}\right\}$.\\
Since $q>1$ and $N>\frac{2m(q+1)}{q}$ we know $N<(N-2m)(q+1)$, thus we have
\begin{align}\label{e13}
\int_{\Om} u_{\eps}^{q+1} dx \geq O\left(\eps^{N-\frac{(N-2m)(q+1)}{2}}\right),\end{align}
for $\eps>0$ sufficiently small.\\
Now, employing the estimates  \eqref{e11}, \eqref{e12} and \eqref{e13} in $\mc{I_\la}$, we obtain
\begin{align*}
\mc{I_\la}\left(t u_{\eps}\right)= & \frac{t^2}{2} \int_{\Om}\left|D^m u_{\eps}\right|^2 d x-\la \frac{t^{q+1}}{q+1} \int_{\Om}\left|u_{\eps}\right|^{q+1} d x-\frac{t^{22_{\al, m}^*}}{22_{\al, m}^*} \int_{\Om} \int_{\Om} \frac{\left|u_{\eps}(x)\right|^{2_{\al, m}^*}\left|u_{\eps}(y)\right|^{2_{\al, m}^*}}{|x-y|^\al} \, dx \, dy \\
\leq & \frac{t^2}{2}\left(C(N, \al)^{\frac{N(N-2m)}{2m(2N-\al)}} S_{H, L}^{\frac{N}{2m}}+O\left(\eps^{N-2m}\right)\right)-\la\frac{t^{q+1}}{q+1} O\left(\eps^{N-\frac{(N-2m)(q+1)}{2}}\right) \\
& -\frac{t^{22_{\al, m}^*}}{22_{\al, m}^*}\left(C(N, \al)^{\frac{N}{2m}} S_{H, L}^{\frac{2N-\al}{2m}}-O\left(\eps^{N-\frac{\al}{2}}\right)\right) \\
:= &  K(t).
\end{align*}
Clearly, $ K(t) \rightarrow-\infty$ as $t \rightarrow+\infty$ and $K(t)>0$ as t is small enough. It gives us that there exists $t_{\eps}>0$ such that 
$\displaystyle \sup_{t \geq 0}K(t)$ is attained at $t_{\eps}$. Differentiating $K(t)$ and equating it to zero, we obtain that
\[t_{\eps}\left(C(N, \al)^{\frac{N(N-2m)}{2m(2N-\al)}} S_{H, L}^{\frac{N}{2m}}+O\left(\eps^{N-2m}\right)\right)-\la t_\eps^q O\left(\eps^{N-\frac{(N-2m)(q+1)}{2}}\right)-t_\eps^{22_{\al, m}^*-1}\left(C(N, \al)^{\frac{N}{2m}} S_{H, L}^{\frac{2N-\al}{2m}}-O\left(\eps^{N-\frac{\al}{2}}\right)\right)=0
\]
and so
\[
t_\eps<\left(\frac{C(N, \al)^{\frac{N(N-2m)}{2m(2N-\al)}} S_{H, L}^{\frac{N}{2m}}+O\left(\eps^{N-2m}\right)}{C(N, \al)^{\frac{N}{2m}} S_{H, L}^{\frac{2N-\al}{2m}}-O\left(\eps^{N-\frac{\al}{2}}\right)}\right)^{\frac{1}{22_{\al, m}^*-2}}:=\mc{T}_\eps,
\]
and there exists $t_0>0$ independent of $\eps$ such that for $\eps>0$ small enough
\[t_\eps>t_0.\]
We note that the function
\[
t \mapsto \frac{t^2}{2}\left(C(N, \al)^{\frac{N(N-2m)}{2m(2N-\al)}} S_{H, L}^{\frac{N}{2m}}+O\left(\eps^{N-2m}\right)\right)-\frac{t^{22_{\al, m}^*}}{22_{\al, m}^*}\left(C(N, \al)^{\frac{N}{2m}} S_{H, L}^{\frac{2N-\al}{2m}}-O\left(\eps^{N-\frac{\al}{2}}\right)\right),
\]
is increasing on $\left[0, \mc{T}_\eps\right]$, we have
\begin{align*}
\max _{t \geq 0} \mc{I_\la}\left(t u_{\eps}\right) & \leq \frac{N+2m-\al}{4 N-2 \al}\left(\frac{C(N, \al)^{\frac{N(N-2m)}{2m(2N-\al)}} S_{H, L}^{\frac{N}{2m}}+O\left(\eps^{N-2m}\right)}{\left(C(N, \al)^{\frac{N}{2m}} S_{H, L}^{\frac{2N-\al}{2m}}-O\left(\eps^{N-\frac{\al}{2}}\right)\right)^{\frac{N-2m}{2N-\al}}}\right)^{\frac{2N-\al}{N+2m-\al}}-O\left(\eps^{N-\frac{(N-2m)(q+1)}{2}}\right)\ \\
& \leq \frac{N+2m-\al}{4N-2\al} S_{H, L}^{\frac{2N-\al}{N+2m-\al}}+O\left(\eps^{\min \left\{N-2m, {N-\frac{\al}{2}}\right\}}\right)-O\left(\eps^{N-\frac{(N-2m)(q+1)}{2}}\right) \\
& <\frac{N+2m-\al}{4N-2\al} S_{H, L}^{\frac{2N-\al}{N+2m-\al}},
\end{align*}
thanks to $t_0<t_{\eps}<\mc{T}_\eps$, \eqref{e13} and $N>\min \left\{\frac{2m(q+3)}{q+1}, 2m+\frac{\al}{q+1}\right\}$.
\item[Case 2]. 
$N \leq \max \left\{\min \left\{\frac{2m(q+3)}{q+1}, 2m+\frac{\al}{q+1}\right\}, \frac{2m(q+1)}{q}\right\}$.
For any fixed $\eps$ in \eqref{e10}, we have $\mc{I_\la}\left(t u_{\eps}\right) \rightarrow-\infty,$ as $t \rightarrow+\infty$. 
Therefore, there exist $t_\la>0$ such that
$\DD\max _{t \geq 0} \mc{I_\la}\left(t u_{\eps}\right)=\mc{I_\la}\left(t_{\la} u_{\eps}\right).$
Moreover $\frac{d \mc{I_\la}(t u_\eps)}{d t}=0$ for $t_\la>0$ and $t_\la$ satisfies
\[t_\la \int_{\Om}\left|D^m u_{\eps}\right|^2 d x=\la t_\la^q \int_{\Om}\left|u_{\eps}\right|^{q+1} d x+t_\la^{22_{\al, m}^*-1} \int_{\Om} \int_{\Om} \frac{\left|u_{\eps}(x)\right|^{2_{\al, m}^*}\left|u_{\eps}(y)\right|^{2_{\al, m}^*}}{|x-y|^\al} \, dx \, dy, \]
which implies that $t_\la \rightarrow 0$ as $\la \rightarrow+\infty$. Therefore,
\[\max _{t \geq 0} \mc{I_\la}\left(t u_{\eps}\right)=\frac{t_\la^2}{2} \int_{\Om}\left|D^m u_{\eps}\right|^2 dx-\frac{\la t_\la^{q+1}}{q+1} \int_{\Om}\left|u_{\eps}\right|^{q+1} dx-\frac{t_\la^{22_{\al, m}^*}}{22_{\al, m}^*} \int_{\Om} \int_{\Om} \frac{\left|u_{\eps}(x)\right|^{2_{\al, m}^*}\left|u_{\eps}(y)\right|^{2_{\al, m}^*}}{|x-y|^\al} \, dx \, dy \rightarrow 0,\]
as $\la \rightarrow+\infty$, which gives the desired conclusion for the case $N \leq \max \left\{\min \left\{\frac{2m(q+3)}{q+1}, 2m+\frac{\al}{q+1}\right\}, \frac{2m(q+1)}{q}\right\}$.
\end{proof}
{\bf{\noi Proof of Theorem \ref{t1}:}}
By Lemma \ref{l1} and the Mountain Pass Theorem without $(PS)$ condition  \cite{4willem1996minimax}, there exists a $(PS)$ sequence $\{u_n\}$ such that $\mc{I_\la}(u_n) \rightarrow c$ and $\mc{I^{\prime}_\la}(u_n) \rightarrow 0$ in $\mb{H}_0^m(\Om)^{-1}$ at the minimax level
\[c=\inf _{\ga \in \Gamma} \max _{t \in[0,1]} \mc{I_\la}(\gamma(t))>0,\]
where
\[\Gamma:=\left\{\gamma \in C\left([0,1], \mb{H}_0^m(\Om)\right): \gamma(0)=0, \mc{I_\la}(\gamma(1))<0\right\}.\]
From Lemma \ref{l5} and the definition of $c$, we know
$c<\frac{N+2m-\al}{4N-2 \al} S_{H, L}^{\frac{2N-\al}{N+2m-\al}}$, provided that either
\begin{enumerate}
\item $N>\max \left\{\min \left\{\frac{2m(q+3)}{q+1}, 2m+\frac{\al}{q+1}\right\}, \frac{2m(q+1)}{q}\right\}$ and $\la>0$, or
\item $N \leq \max \left\{\min \left\{\frac{2m(q+3)}{q+1}, 2m+\frac{\al}{q+1}\right\}, \frac{2m(q+1)}{q}\right\}$ and $\la$ is sufficiently large.
\end{enumerate}
Applying Lemma \ref{l4}, we know $\{u_n\}$ contains a convergent subsequence. We have $\mc{I_\la}$ has a critical value 
$c \in\left(0, \frac{N+2m-\al}{4N-2\al} S_{H, L}^{\frac{2N-\al}{N+2m-\al}}\right)$ and thus the problem \eqref{e5} has a nontrivial solution.   \qed
\section{\bf{\texorpdfstring{$q=1$}{}}}
\noi In this section, we give details of the proof of Theorem \ref{t4} and the existence result depends on the dimension $N$ and on the parameter $\la$.
Further, this section is divided into two sub-sections. In the first sub-section we discuss for $N \geq 2m+1$, $0<{\la}<{\la}^m_1$ and the case $N \geq 2m+1$, ${\la}>{\la}^m_1$ is discuss in second subsection. Before starting the subsections, we define the energy functional $\mc{I_\la}:\mb{H}_0^{m}(\Om)\to \mb{R}^N$ corresponding to the problem \eqref{e5} as
\[\mc{I_\la}(u)= \frac{1}{2}\int_{\Om}|D^{m}u|^2 dx -\frac{\la}{2}\int_{\Om}|u|^2 dx-\frac{1}{22^*_{\al, m}}\int_{\Om}\int_{\Om} \frac{|u(x)|^{2_{\al, m}^{*}}|u(y)|^{2_{\al, m}^{*}}}{|x-y|^{\al}} \, dx \, dy.\]
\subsection{\bf{The case \texorpdfstring{$N \geq 2m+1$, $0<{\la}<{\la}^m_1$}{N >= 2m+1, 0<lambda<lambda^m_1}{}}}
\noi In this sub-section, we proved that the problem \eqref{e5} admits a nontrivial weak solution.
\begin{lemma}\label{l18}
Let $N$ be the dimension and $\la>0$ be the position parameter, then, there exist $u_{\eps} \in H_0^m(\Om) \backslash\{0\}$ such that
\[\frac{\|u_{\eps}\|_2^2-\la|u_{\eps}|_2^2}{\|u_{\eps}\|_{N L}^2}<S_{H, L} \quad \text{for}  \ {\eps>0},\] provided that:
\begin{enumerate}
\item $\la>\la^*>0$ whenever $N \in[2m+1, 4m)$;
\item  $\la>0$ whenever $N\geq4m$.
\end{enumerate}
\end{lemma}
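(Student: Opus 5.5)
We will test the quotient on the truncated bubbles $u_\eps=\phi U_\eps$ from \eqref{e10} and let $\eps\to0$. Here $\|u_\eps\|_{NL}^2$ is understood as the quantity appearing in the denominator of $S_{H,L}$, namely $\left(\int_\Om\int_\Om |u_\eps(x)|^{2^*_{\al,m}}|u_\eps(y)|^{2^*_{\al,m}}|x-y|^{-\al}\,dx\,dy\right)^{1/2^*_{\al,m}}$, which is consistent with the usage in Lemma \ref{l4}.

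\textbf{Step 1: numerator and denominator.} By \eqref{e11}, $\|u_\eps\|^2=S^{N/(2m)}+O(\eps^{N-2m})$. By part (1) of the estimates lemma (namely \eqref{e52}),
\[
\|u_\eps\|_{NL}^{2}\ \ge\ \left(C(N,\al)^{\frac{N}{2m}}S_{H,L}^{\frac{2N-\al}{2m}}-O(\eps^{\frac{2N-\al}{2}})\right)^{\frac{N-2m}{2N-\al}}
= C(N,\al)^{\frac{N(N-2m)}{2m(2N-\al)}}S_{H,L}^{\frac{N-2m}{2m}}-O(\eps^{\frac{2N-\al}{2}}).
\]
Next use $S=S_{H,L}\,C(N,\al)^{1/2^*_{\al,m}}$, i.e. $S^{N/(2m)}=S_{H,L}^{N/(2m)}C(N,\al)^{\frac{N(N-2m)}{2m(2N-\al)}}$. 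With this, the leading quotient $S^{N/(2m)}\big/\bigl(C(N,\al)^{\frac{N(N-2m)}{2m(2N-\al)}}S_{H,L}^{\frac{N-2m}{2m}}\bigr)$ equals exactly $S_{H,L}$. Since $\frac{2N-\al}{2}>N-2m$ whenever $\al<N$... (more precisely, since $\frac{2N-\al}{2}\ge \frac N2$ and one compares against $N-2m$), we would absorb the denominator error into $O(\eps^{\min\{N-2m,\,N-\al/2\}})$. This gives
\[
\frac{\|u_\eps\|^2-\la|u_\eps|_2^2}{\|u_\eps\|_{NL}^2}\ \le\ S_{H,L}+O\bigl(\eps^{\min\{N-2m,\,N-\frac{\al}{2}\}}\bigr)-C\la|u_\eps|_2^2 .
\]
If $N-\al/2<N-2m$ (which happens when $\al>4m$), the error is still $\ge \eps^{N/2}$-order. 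One must check that it is dominated; this point needs care.

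\textbf{Step 2: compare with \eqref{e54}.}

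For $N\ge 4m+1$, \eqref{e54} gives $|u_\eps|_2^2\ge C_3\eps^{2m}$. Since $2m<N-2m$, and also $2m<N-\al/2$ because $N>4m$, the negative term $-C\la\eps^{2m}$ dominates the error for small $\eps$. So the quotient is $<S_{H,L}$ for every $\la>0$.

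For $N=4m$, \eqref{e54} gives $|u_\eps|_2^2\ge C_2\eps^{2m}|\ln\eps|$. This beats $O(\eps^{2m})$, so again every $\la>0$ works. This settles case (2).

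For $2m+1\le N<4m$, we only get $|u_\eps|_2^2\ge C_1\eps^{N-2m}$, which has the same order as the gradient error. We therefore fix a small $\eps_0$ and write the quotient as $S_{H,L}+(A-\la C_1)\eps_0^{N-2m}+o(\eps_0^{N-2m})$, with $A$ the constant from the $O$-terms. Setting $\la^*:=A/C_1$ (enlarged slightly to absorb the remainder) gives strict inequality for $\la>\la^*$. Alternatively, one can fix any $u_\eps$ and note that the numerator decreases linearly in $\la$; it becomes negative, hence the quotient is $<S_{H,L}$, once $\la>\|u_\eps\|^2/|u_\eps|_2^2$. That alternative gives a clean $\la^*$ directly, and I would use it as the rigorous choice in case (1).

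\textbf{Main obstacle.} The delicate step is the bookkeeping of error exponents in Step 1. The denominator's error order $\eps^{(2N-\al)/2}$ must be shown to be of lower order than $\eps^{2m}$ when $N\ge4m$. This holds since $(2N-\al)/2>N/2\ge 2m$. It must also be no worse than the gradient error in the low-dimensional case, where the $\la^*$ argument makes that comparison irrelevant anyway.
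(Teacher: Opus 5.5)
For $N\ge 4m$ you follow the paper's argument. Combining \eqref{e11}, \eqref{e52} and $S=S_{H,L}C(N,\al)^{1/2^*_{\al,m}}$ bounds the quotient by $S_{H,L}+O(\eps^{\min\{N-2m,\,N-\al/2\}})-c\la|u_\eps|_2^2$, and you then compare with \eqref{e54}. Your observation $N-\frac{\al}{2}>\frac N2\ge 2m$ is exactly what justifies discarding the denominator error.

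For $N\in[2m+1,4m)$, the argument you adopt as ``the rigorous choice'' takes a genuinely different and much weaker route. You fix $u_{\eps_0}$ and take $\la>\|u_{\eps_0}\|^2/|u_{\eps_0}|_2^2$, so the numerator is negative. This does prove the lemma as literally worded. But since $\|u\|^2\ge\la_1^m|u|_2^2$ on $\mb{H}_0^m(\Om)$, it forces $\la^*\ge\la_1^m$. At that point the statement is trivial (testing with $e_1$ works for every $\la>\la_1^m$), and the bubble plays no role.

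It also makes the lemma useless where the paper needs it. Case 1 in the proof of Theorem \ref{t4} runs the mountain pass on $(\la^*,\la_1^m)$, which is empty for your $\la^*$. The low-dimensional case of Lemma \ref{l12} uses the same threshold inside an $\eps\to0$ expansion, which requires $\la^*$ to be independent of $\eps$.

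The paper's route gives such a threshold. The quotient is at most $S_{H,L}+(A-c\la)\eps^{N-2m}+o(\eps^{N-2m})$, with $A,c>0$ and the remainder independent of $\la$. So $\la^*=A/c$, and for each $\la>\la^*$ one then chooses $\eps$ small. That is precisely your first sketch.

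The obstacle that made you abandon that sketch does not arise. In case (1) you have $\al<N<4m$, so $\frac{2N-\al}{2}>N-2m$, and the denominator error $O(\eps^{(2N-\al)/2})$ is automatically $o(\eps^{N-2m})$. Likewise the $O(\eps^{2m})$ term in \eqref{e54} is $o(\eps^{N-2m})$ because $N<4m$. Your parenthetical claim that $\frac{2N-\al}{2}>N-2m$ whenever $\al<N$ is false in general, since it needs $\al<4m$; but that condition holds in the only regime where this comparison matters.

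So keep the asymptotic argument rather than the trivial alternative, ordering the quantifiers as $\la^*$ first and then $\eps=\eps(\la)$.
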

\begin{proof}
If $N \in[2m+1, 4m)$, then using \eqref{e54}, \eqref{e11} and \eqref{e12}, yields

\begin{align*}
\frac{\|u_{\eps}\|_2^2-\la\left|u_{\eps}\right|_2^2}{\left\|u_{\eps}\right\|_{N L}^2} & \leq \frac{C(N, \al)^{\frac{N(N-2m)}{2m(2N- \al)}} S_{H, L}^{\frac{N}{2m}}+ O\left(\eps^{N-2m}\right)-\la C_1 \eps^{N-2m} +O\left(\eps^{2m} \right)}{\left(C(N, \al)^{\frac{N}{2m}} S_{H, L}^{\frac{2N-\al}{2m}}-O\left(\eps^{N-\frac{\al}{2}}\right)\right)^{\frac{N-2m}{2N-\al}}} \\
&=S_{H, L} +O\left(\eps^{\frac{2N-\al}{2}}\right)+O\left(\eps^{N-2m}\right) -\la C_1 \eps^{N-2m} \\
&=S_{H, L} +O\left(\eps^{min\left\{N-2m, N-{\frac{\al}{2}}\right\}}\right) -\la C_1 \eps^{N-2m}\\ 
&<S_{H, L},
\end{align*}
if $\la$ is large enough, i.e $\la>\la^*>0$.\\
If $N=4m$, we have
\begin{align*}
\frac{\|u_{\eps}|_2^2-\la|u_{\eps}|_2^2}{\|u_{\eps}\|_{N L}^2} & \leq\frac{C(N, \al)^{\frac{N(N-2m)}{2m(2N-\al)}} S_{H, L}^{\frac{N}{2m}}+O\left(\eps^{N-2m}\right)-\la C_2 |ln{\eps}| \eps^{2m} -O\left(\eps^{2m}\right)}{\left(C(N, \al)^{\frac{N}{2m}} S_{H, L}^{\frac{2N-\al}{2m}}-O\left(\eps^{N-\frac{\al}{2}}\right)\right)^{\frac{N-2m}{2N-\al}}} \\
& =S_{H, L}-\frac{\la C_2 \eps^2|\ln \eps|}{\left(C(4m, \al)^2 S_{H, L}^{\frac{8m-\al}{2m}}-O\left(\eps^{4m-\frac{\al}{2}}\right)\right)^{\frac{2m}{8m-\al}}}+O\left(\eps^{2m}\right) \\
& \leq S_{H, L}-\la C_2 \eps^2|\ln \eps|+O\left(\eps^{2m}\right) \\
& <S_{H, L}.
\end{align*}
Similarly, if $N> 4m$, we have
\begin{align*}
\frac{\|u_{\eps}|_2^2-\la|u_{\eps}|_2^2}{\|u_{\eps}\|_{N L}^2} & \leq\frac{C(N, \al)^{\frac{N(N-2m)}{2m(2N-\al)}} S_{H, L}^{\frac{N}{2m}}+O\left(\eps^{N-2m}\right)-\la C_3\eps^{2m} -O\left(\eps^{N-2m}\right)}{\left(C(N, \al)^{\frac{N}{2m}} S_{H, L}^{\frac{2N-\al}{2m}}-O\left(\eps^{N-\frac{\al}{2}}\right)\right)^{\frac{N-2m}{2N-\al}}} \\
& \leq S_{H, L}-\la C_3 \eps^{2m} +O\left(\eps^{N-2m}\right)\\
&<S_{H, L}.
\end{align*}
This completes the proof.
\end{proof}
\noi Now, we show that $\mc{I}_\la$ has the geometric structure of the Mountain Pass Theorem when $\la \in (0, \la^m_1 )$.
\begin{lemma} \label{l19}
If $N\geq 2m+1$ and ${\la}\in (0, \la^m_1)$, then the functional   $\mc{I}_{\la}$ satisfies the following properties:
\begin{enumerate}
\item There exist $\mu, \rho>0$ such that $\mc{I}_\la(u)\geq \mu$ for $\|u\|=\rho$.
\item There exist $e\in \mb{H}^m_0(\Om)$ with $\|e\|>{\rho}$ such that $\mc{I}_\la(e)<0$.   
\end{enumerate}\end{lemma}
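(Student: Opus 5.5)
The plan is to follow the proof of Lemma \ref{l1}, replacing the subcritical term by the quadratic one and using the variational characterisation of the first Dirichlet eigenvalue of $(-\Delta)^m$:
\[
\la_1^m=\inf_{u\in\mb{H}_0^m(\Om)\setminus\{0\}}\frac{\int_\Om|D^mu|^2dx}{\int_\Om|u|^2dx},
\]
which gives $\int_\Om|u|^2dx\leq (\la_1^m)^{-1}\|u\|^2$ for every $u\in\mb{H}_0^m(\Om)$.

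For (1), the two estimates combine as follows. The eigenvalue bound controls the quadratic term. The definition \eqref{e4} of $S_{H,L}$, together with $\mb{H}_0^m(\Om)\subset\mb{H}_0^m(\mb{R}^N)$ via extension by zero, gives
\[
\int_\Om\int_\Om\frac{|u(x)|^{2^*_{\al,m}}|u(y)|^{2^*_{\al,m}}}{|x-y|^\al}\,dx\,dy\leq S_{H,L}^{-2^*_{\al,m}}\|u\|^{22^*_{\al,m}}.
\]
Hence
\[
\mc{I}_\la(u)\geq \frac12\Big(1-\frac{\la}{\la_1^m}\Big)\|u\|^2-\frac{1}{22^*_{\al,m}}S_{H,L}^{-2^*_{\al,m}}\|u\|^{22^*_{\al,m}}.
\]
Since $\la<\la_1^m$, the coefficient $1-\la/\la_1^m$ is strictly positive. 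Since $22^*_{\al,m}>2$ (indeed $2^*_{\al,m}=\frac{2N-\al}{N-2m}>1$ because $N-\al>-2m$), the right-hand side is of the form $a\rho^2-b\rho^{p}$ with $a,b>0$ and $p>2$ when $\|u\|=\rho$. Choosing $\rho$ small therefore yields $\mc{I}_\la(u)\geq\mu>0$ on $\|u\|=\rho$; explicitly one may take $\rho$ with $b\rho^{p-2}=a/2$ and $\mu=a\rho^2/2$.

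For (2), fix any $w\in\mb{H}_0^m(\Om)\setminus\{0\}$, for instance $w=u_\eps$ from \eqref{e10}. Its nonlocal term is strictly positive because $|w|^{2^*_{\al,m}}\not\equiv0$ and the kernel is positive. Then
\[
\mc{I}_\la(tw)=\frac{t^2}{2}\Big(\|w\|^2-\la|w|_2^2\Big)-\frac{t^{22^*_{\al,m}}}{22^*_{\al,m}}\|w\|_{NL}^{22^*_{\al,m}}\to-\infty \quad\text{as } t\to+\infty,
\]
since the highest power has a negative coefficient. Taking $e=t_1w$ with $t_1$ large enough that $\|e\|>\rho$ and $\mc{I}_\la(e)<0$ completes the proof.

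The only step needing care is (1): there the eigenvalue inequality must absorb the negative quadratic term, which is exactly where the assumption $\la<\la_1^m$ enters. Everything else is routine, as in Lemma \ref{l1}.
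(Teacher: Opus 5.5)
Your proof is correct and follows essentially the same route as the paper. Part (1) combines the Rayleigh-quotient bound $|u|_2^2\le(\la_1^m)^{-1}\|u\|^2$ with the HLS/Sobolev estimate; your constant $S_{H,L}^{-2^*_{\al,m}}$ is exactly the paper's $C(N,\al)S^{-2^*_{\al,m}}$. Part (2) takes $e=t_1w$ with $t_1$ large, as the paper does.
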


\begin{proof}\begin{enumerate}
\item Using the Sobolev embedding and Hardy-Littlewood-Sobolev inequality, for all $u\in\mb{H}_0^{m}(\Om)\setminus\{0\}$ and ${\la}\in (0,\la^m_1)$, we obtain
\begin{align*}
\mc I_\la(u) &\geq \frac{1}{2}\| u \|^2  -\frac{\la} {2}\int_{\Om}|u|^{2} dx-\frac{1}{22^*_{\al, m}}C(N, \al)S^{-2^*_{\al, m}}\| u \|^{22^*_{\al, m}}\\
&\geq\frac{1}{2}\left(1-\frac{\la}{\la^m_1}\right)\| u \|^2 -\frac{C_5}{22^*_{\al, m}}\| u \|^{22^*_{\al, m}}.
\end{align*}
Since $2<{22^*_{\al, m}}$, we can choose some $\rho$ small enough such that $\mc I_\la(u)\geq\mu>0$ for all u that satisfy $\|u\|=\rho$.
\item For some  $v\in\mb{H}_0^{m}(\Om)\setminus\{0\}$, we have
\[\mc I_\la(tv)= \frac{t^2}{2}\int_{\Om}|D^{m}v|^2 dx -\frac{\la t^{2}}{2}\int_{\Om}|v|^{2} dx-\frac{t^{22^*_{\al, m}}}{22^*_{\al, m}}\int_{\Om}\int_{\Om} \frac{|v(x)|^{2_{\al, m}^{*}}|v(y)|^{2_{\al, m}^{*}}}{|x-y|^{\al}} \, dx \, dy <0\]
for $t>0$ large enough. Choosing $e:=t_1v$ for some large $t_1>0$ and the conclusion (2) follows. 
\end{enumerate}
\end{proof}
\noi{\bf{Remark}}: $\mc I_\la$ satisfy mountain pass geometry. Therefore, there exists a  Palais-Smale $(PS)$ sequence $\{u_n\}$ such that $\mc{I}_\la(u_n)\rightarrow c$ and $\mc{I^{\prime}_\la}(u_n)\rightarrow 0$ in $\mb{H}_0^{m}(\Om)^{-1}$ at the  minimax level 
\[c=\inf_{\ga\in\Ga}\max_{t\in[0, 1]} \mc{I}_\la(\ga(t))>0,\]
where
\[\Ga:=\{\ga\in C[0, 1], \mb{H}_0^{m}(\Om):\ga(0)=0, \mc{I}_\la(\ga(1))<0\}.\]

\begin{proposition} \label{p3}
Let $N\geq2m+1$, $0<\al<N$ and $c>0$ be the critical value of $\mc{I_\la}$ such that
$c<\frac{N+2m-\al}{4N-2\al} S_{H, L}^{\frac{2N-\al}{N+2m-\al}}$.
\end{proposition}
\begin{proof}
From Lemma \ref{l18}, we know there exists $u_{\eps}\in \mb{H}_0^{m}(\Om)\setminus\{0\}$ such that 
\[\frac{\int|D^m u_{\eps}|^2 dx-\la |u_{\eps}|^2_2}{\|u_{\eps}\|^2_{NL}}<S_{H, L}.\]
Therefore,
\begin{align*}
0<\max_{t\geq0}\mc{I}_\la(tu_{\eps})&=\max_{t\geq0}\left(\frac{t^2}{2}\int_{\Om}|D^m u_{\eps}|^2dx-\frac{\la t^2}{2}\int_{\Om}u_{\eps}^2dx-\frac{t^{22^*_{\al, m}}}{22^*_{\al, m}}\int_\Om\int_\Om\frac{|u_{\eps}(x)|^{2^*_{\al, m}}|u_{\eps}(y)|^{2^*_{\al, m}}}{|x-y|^\al} \, dx \, dy\right)\\
&=\frac{N+2m-\al}{4N-2\al}\left(\frac{\DD \int|D^m u_{\eps}|^2 dx-\la |u_{\eps}|^2_2}{||u_{\eps}||^2_{NL}}\right)^\frac{2N-\al}{N+2m-\al}\\
&<\frac{N+2m-\al}{4N-2\al}S^\frac{2N-\al}{N+2m-\al}_{H, L}.
\end{align*}
By the definition of $c$, we know $c<\frac{N+2m-\al}{4N-2\al}S^\frac{2N-\al}{N+2m-\al}_{H, L}$.
\end{proof}
\vspace{0.2cm}
\noi {\bf{Proof of Theorem \ref{t4}-(1)}}
Let $\{u_n\}$ be the $(PS)_c$ sequence obtained in Proposition \ref{p3}. Lemma \ref{l4}, yield $\{u_n\}$ has a convergent subsequence. And so, we have $\mc{I}_\la$ has a critical value $c\in\left(0, \frac{N+2m-\al}{4N-2\al}S^\frac{2N-\al}{N+2m-\al}_{H,L}\right)$ and the problem \eqref{e5} has a nontrivial solution.\qed


\subsection{\bf{The case \texorpdfstring{$N \geq 2m+1, \la \geq \la^m_1$}{}}}  In this sub-section, we proved the existence of nontrivial solution for $\la\geq\la^m_1$ where $\la$ is not an eigenvalue, using the linking argument based on the spectral decomposition of $\mb{H}_0^{m}(\Om)$.
Moreover, for $N\in[2m+1, 4m)$, there exists $\la^{*}>0$ such that the problem \eqref{e5} has a solution for all $\la>\la^{*}$.

\vspace{0.2cm}

\noi Suppose that $\la \in\left[\la^m_j, \la^m_{j+1}\right)$ for some $j\in\mb{N}$, where $\la^m_j$ denotes the $j$-th eigenvalue of $(-\Delta)^m$ in $\Om$ having a smooth boundary and $e_j$ is the $j$-th eigenfunction corresponding to the eigenvalue $\la^m_j$.\\
We denote
\[E_{j+1} := \{u\in \mb{H}^m_0(\Om): \langle u, e_i \rangle_{\mb{H}^m_0(\Om)} = 0, \forall \,    i = 1, 2, . . . , j\}\]
and $W_j :=\operatorname{span}\{e_1, \ldots, e_j, \}$  denote the linear subspace generated by the first $j$ eigenfunctions of $(-\Delta)^m$
for any $j\in\mb{N}$. It is easily seen that $W_j$ is finite dimensional and $W_j \oplus E_{j+1} =\mb{H}^m_0(\Om)$.
\begin{lemma}\label{l10}
If $N\geq2m+1$ and $\la \in\left[\la^m_j, \la^m_{j+1}\right)$ for some $j \in \mb{N}$, then, the functional $\mc{I}_\la$ satisfies the following:
\begin{enumerate}
\item There exist $\mu, \rho>0$ such that for any $u \in \mb{E}_{j+1}$ with $\|u\|=\rho$ then $\mc{I}_\la(u) \geq \mu$.
\item  $\mc{I}_\la(u)<0$ for any $u \in {W}_j$.
\item  Let $F$ be a finite dimensional subspace of $\mb{H}_0^m(\Om)$. Then there exists $R>\rho$ such that for any $u \in F$ with $\|u\| \geq R$, $\mc{I}_\la(u) \leq 0$.
\end{enumerate}\end{lemma}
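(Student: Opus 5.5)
The plan is to check the three linking properties directly from the spectral structure of $(-\Delta)^m$ with Dirichlet conditions. We use the variational characterization of the eigenvalues $\la^m_j$, the Hardy--Littlewood--Sobolev bound encoded in $S_{H,L}$, and the fact (Lemma \ref{l17}) that $\|\cdot\|_{NL}$ is a norm.

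\emph{Step (1).} Since $(-\Delta)^m e_i=\la^m_i e_i$ with homogeneous Dirichlet data, we have $\langle u,e_i\rangle_{\mb{H}^m_0(\Om)}=\la^m_i\int_\Om u e_i\,dx$. Hence $E_{j+1}$ is also the $L^2$-orthogonal complement of $W_j$. The Courant--Fischer characterization then gives
\[
\|u\|^2\geq \la^m_{j+1}|u|_2^2 \quad\text{for all } u\in E_{j+1}.
\]
Combining this with $\|u\|_{NL}^{2}\leq S_{H,L}^{-1}\|u\|^2$ (from \eqref{e4}) yields
\[
\mc I_\la(u)\geq \frac12\Big(1-\frac{\la}{\la^m_{j+1}}\Big)\|u\|^2-\frac{S_{H,L}^{-2^*_{\al,m}}}{22^*_{\al,m}}\|u\|^{22^*_{\al,m}}.
\]
Because $\la<\la^m_{j+1}$ and $22^*_{\al,m}>2$, choosing $\rho$ small gives $\mc I_\la(u)\geq\mu>0$ on $\{\|u\|=\rho\}\cap E_{j+1}$. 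This is the same argument as in Lemma \ref{l19}.

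\emph{Step (2).} For $u=\sum_{i\le j}c_ie_i\in W_j$ we have $\|u\|^2=\sum \la^m_i c_i^2|e_i|_2^2\leq \la^m_j|u|_2^2\leq\la|u|_2^2$. The quadratic part $\frac12(\|u\|^2-\la|u|_2^2)$ is therefore $\le 0$. The nonlocal term $-\frac{1}{22^*_{\al,m}}\|u\|_{NL}^{22^*_{\al,m}}$ is strictly negative whenever $u\neq0$. So $\mc I_\la(u)<0$ for every $u\in W_j\setminus\{0\}$, and $\mc I_\la(0)=0$. The statement should be read either with $u\neq 0$ or as $\mc I_\la\le 0$ on $W_j$; both suffice for the linking theorem.

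\emph{Step (3).} Let $F$ be finite dimensional. By Lemma \ref{l17}, $\|\cdot\|_{NL}$ is a norm on $F$; it is finite on $F$ by the Hardy--Littlewood--Sobolev inequality and Sobolev embedding. Since all norms on a finite dimensional space are equivalent, there is $c_F>0$ with $\|u\|_{NL}\geq c_F\|u\|$ on $F$. Dropping the term $-\frac{\la}{2}|u|_2^2\leq 0$, we get
\[
\mc I_\la(u)\leq \frac12\|u\|^2-\frac{c_F^{22^*_{\al,m}}}{22^*_{\al,m}}\|u\|^{22^*_{\al,m}},
\]
which is $\le 0$ once $\|u\|\geq R$ for some $R>\rho$ depending only on $F$.

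The only delicate point is Step (3): one must know that $\|\cdot\|_{NL}$ really is a norm, in particular that it is definite. Definiteness holds because the kernel $|x-y|^{-\al}$ is positive, so $\|u\|_{NL}=0$ forces $u=0$ a.e. This is exactly what makes the norm-equivalence argument work, so I would invoke Lemma \ref{l17} there. The other two steps are routine spectral estimates.
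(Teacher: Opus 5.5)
Your proposal is correct and follows essentially the same route as the paper: the spectral bound $\|u\|^2\geq\la^m_{j+1}|u|_2^2$ on $E_{j+1}$ combined with the $S_{H,L}$/Hardy--Littlewood--Sobolev estimate for (1), the eigenfunction expansion on $W_j$ for (2), and equivalence of $\|\cdot\|$ and $\|\cdot\|_{NL}$ on the finite-dimensional space $F$ (after dropping $-\frac{\la}{2}|u|_2^2\le 0$) for (3). Your caveat in (2) is also right: the paper's strict inequality holds only for $u\in W_j\setminus\{0\}$, and the linking argument only needs $\mc{I}_\la\le 0$ on $W_j$.
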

\begin{proof}
\begin{enumerate}\item Let $\la \in\left[\la^m_j, \la^m_{j+1}\right)$ and $u \in \mb{E}_{j+1} \backslash\{0\}$, then using Sobolev embedding and the Hardy-Littlewood-Sobolev inequality, we have
\begin{equation*}
\mc{I}_\la(u) \geq\frac{1}{2}\left(1-\frac{\la}{\la^m_{j+1}}\right)\| u \|^2 -\frac{C_5}{22^*_{\al, m}}\| u \|^{22^*_{\al, m}},
\end{equation*}
since $2<22_{\al, m}^*$. Choose $\mu$, $\rho>0$ such that $\mc{I}_\la(u) \geq \mu$ for any $u \in \mb{E}_{j+1}$ with $\|u\|=\rho$.
\item Let $u \in {W}_j$, then $u=\sum_{i=1}^j l_i e_i$, with $l_i \in \mb {R}, i=1, \ldots, j$. Since $\left\{e_i\right\}_{i \in \mb {~N}}$ is an orthonormal basis of $L^2(\Om)$ and an orthogonal basis of $\mb{H}_0^m(\Om)$, we have
\[
\int_{\Om} u^2 d x=\sum_{i=1}^j l_i^2 \quad \text { and } \quad \int_{\Om}|D^m u|^2 d x=\sum_{i=1}^j l_i^2\int_{\Om}|D^{m} e_i|^2
\]
which gives
\begin{align*}
\mc{I}_\la(u) & =\frac{1}{2} \sum_{i=1}^j l_i^2\left(\DD\int_{\Om}|D^{m}e_i|^2-\la\right)-\frac{1}{22_{\al, m}^*} \int_{\Om} \int_{\Om} \frac{|u(x)|^{2_{\al, m}^*}|u(y)|^{2_{\al, m}^*}}{|x-y|^\al} \, dx \, dy \\
& <\frac{1}{2} \sum_{i=1}^j l_i^2\left(\la^m_i-\la\right)\leq 0,
\end{align*}
since $\la^m_i\leq\la^m_j\leq\la$.
\item Let $u \in F \backslash\{0\}$, then using non-negativity of $\la$, we obtain
\begin{align*}
\mc{I}_\la(u) & \leq \frac{1}{2}\|u\|^2-\frac{1}{22_{\al, m}^*}\|u\|_{N L}^{22_{\al, m}^*} \\
& \leq \frac{1}{2}\|u\|^2-\frac{C_5}{22_{\al, m}^*}\|u\|^{22_{\al, m}^*},
\end{align*}
for some positive constant $C_5>0$, since on a finite-dimensional space all norms are equivalent. Therefore, $\mc{I}_\la(u) \rightarrow-\infty$ as $\|u\| \rightarrow+\infty$. So, there exists $R>\rho$ such that for any $u \in \mathbf{F}$ with $\|u\| \geq R$ we have $\mc{I}_\la(u) \leq 0$ which proves (3).\\
\vspace{0.01cm}

\noi Now define the linear space
\[\mathbf{P}_{j, \eps}:=\operatorname{span}\{e_1, \ldots, e_j, u_\eps\} \text { for any } j \in \mb{N}\]
and set
\[m_{j, \eps}:=\max _{u \in \mathbf{P}_{j, \eps}, \|u\|_{NL}=1}\left(\int_{\Om}|D^m u|^2 d x-\la \int_{\Om}|u|^2 d x\right),\]
where $\|\cdot\|_{NL}$ same as defined in Lemma \ref{l17}.
\end{enumerate}\end{proof}
\begin{lemma}
If $N \geq 2m+1$ and $\la \in[\la^m_j, \la^m_{j+1})$ for some $j \in \mb{N}$. Then
\begin{enumerate}
\item $m_{j,\eps}$ is achieved at some $u_m \in \mathbf{P}_{j, \eps}$ and $u_m=v+t u_\eps$\text{ with } $v \in {W}_j$ and $t \geq 0$.
\item The following estimate holds true
\begin{equation}\label{e21}
m_{j,\eps} \leq\left\{\begin{array}{l}
\left(\la^m_j-\la\right)|v|_2^2 \quad \text { if } t=0, \\
\left(\la^m_j-\la\right)|v|_2^2+A_{\eps}\left(1+|v|_2 O\left(\eps^{\frac{N-2m}{2}}\right)\right)+ O\left(\eps^{\frac{N-2m}{2}}\right)|v|_2 \quad \text { if } t>0,
\end{array}\right.
\end{equation}
as $\eps \rightarrow 0$, where  $v \in {W}_j$, $u_{\eps}$ is defined in Section $2$ and
\begin{equation}\label{e22}
A_{\eps}=\frac{\|u_{\eps}\|_2^2-\la|u_{\eps}|_2^2}{\|u_{\eps}\|_{NL}^2}.
\end{equation}\end{enumerate}
\end{lemma}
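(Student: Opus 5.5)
Part (1) is a compactness argument in finite dimensions. $\mathbf{P}_{j,\eps}$ is finite dimensional, so the set $\{u\in\mathbf{P}_{j,\eps}:\|u\|_{NL}=1\}$ is compact. Here I use that $\|\cdot\|_{NL}$ is a norm on $\mathbf{P}_{j,\eps}$ (Lemma \ref{l17}), and that all norms there are equivalent. The quadratic form $u\mapsto\int_\Om|D^mu|^2-\la\int_\Om u^2$ is continuous, so the maximum is attained at some $u_m$. Writing $u_m=v+tu_\eps$ with $v\in W_j$, I may assume $t\ge0$. Indeed, both the quadratic form and $\|\cdot\|_{NL}$ are even, so $-u_m$ is also a maximizer, and replacing $u_m$ by $-u_m$ if necessary makes $t\ge0$.

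For part (2), the case $t=0$ is immediate. Write $v=\sum_{i\le j}l_ie_i$ and use the orthogonality of the eigenfunctions, as in Lemma \ref{l10}(2):
\[\|v\|^2-\la|v|_2^2=\sum_{i=1}^j(\la_i^m-\la)l_i^2\le(\la_j^m-\la)|v|_2^2 .\]

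For $t>0$, I expand
\[\|v+tu_\eps\|^2-\la|v+tu_\eps|_2^2=\big(\|v\|^2-\la|v|_2^2\big)+t^2\big(\|u_\eps\|^2-\la|u_\eps|_2^2\big)+2t\Big(\int_\Om D^mvD^mu_\eps-\la\int_\Om vu_\eps\Big).\]
The first bracket is at most $(\la_j^m-\la)|v|_2^2$. For the cross term I integrate by parts: $v$ is smooth and finite-dimensional, and $u_\eps\in \mb{H}_0^m(\Om)$. This gives
\[\int_\Om D^mvD^mu_\eps=\int_\Om(-\De)^mv\,u_\eps=\sum_i\la_i^ml_i\int_\Om e_iu_\eps .\]
Equivalence of norms on $W_j$ gives $|e_i|_\infty\le C$ and $|(-\De)^mv|_\infty\le C|v|_2$. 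Hence the cross term is bounded by $Ct|v|_2\|u_\eps\|_1=t|v|_2\,O(\eps^{\frac{N-2m}{2}})$, using estimate (2) of the lemma on the minimizers ($\|u_\eps\|_1=O(\eps^{\frac{N-2m}{2}})$). The middle term equals $t^2A_\eps\|u_\eps\|_{NL}^2$ by \eqref{e22}.

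It remains to control $t$ and $t\|u_\eps\|_{NL}$ using the constraint $\|v+tu_\eps\|_{NL}=1$. This is the main obstacle. The triangle inequality for $\|\cdot\|_{NL}$ (Lemma \ref{l17}) gives
\[t\|u_\eps\|_{NL}\le1+\|v\|_{NL}.\]
Since $W_j$ is finite dimensional, $\|v\|_{NL}\le C|v|_2$. By estimate (1) of that lemma, $\|u_\eps\|_{NL}$ is bounded below uniformly in $\eps$, so $t$ is bounded. Squaring,
\[t^2\|u_\eps\|_{NL}^2\le 1+C|v|_2+C|v|_2^2 .\]
The $|v|_2$-dependence must not spoil the leading term $A_\eps$. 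I would first show that $|v|_2$ stays bounded: at the maximizer $m_{j,\eps}\ge A_\eps>0$, while $(\la_j^m-\la)|v|_2^2\le0$ and the $v$-terms grow at most linearly in $|v|_2$, which forces a bound on $|v|_2$.

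To sharpen to the stated form $A_\eps(1+|v|_2O(\eps^{\frac{N-2m}{2}}))$, I would exploit that $v$ and $u_\eps$ are almost disjoint in the nonlocal norm. The mixed convolution integrals between $v$ (bounded in $L^\infty$) and $u_\eps^{2^*_{\al,m}-1}$ are controlled through $\|u_\eps\|_1$-type quantities, so they are $O(\eps^{\frac{N-2m}{2}})$. This gives
\[1=\|v+tu_\eps\|_{NL}^{2\cdot2^*_{\al,m}}\ge t^{2\cdot2^*_{\al,m}}\|u_\eps\|_{NL}^{2\cdot2^*_{\al,m}}-C|v|_2\,O(\eps^{\frac{N-2m}{2}}),\]
and hence $t^2\|u_\eps\|_{NL}^2\le1+|v|_2O(\eps^{\frac{N-2m}{2}})$. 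Collecting the three terms yields \eqref{e21}.
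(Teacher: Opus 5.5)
\textbf{Gap: uniform bounds on $t$ and $|v|_2$.} Your overall route is the paper's: expand the quadratic form at $u_m=v+tu_\varepsilon$, bound the cross term by $C\,t\,|v|_2|u_\varepsilon|_1$, and control $t\|u_\varepsilon\|_{NL}$ through a one-sided expansion of the constraint $\|v+tu_\varepsilon\|_{NL}=1$.

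That one-sided expansion is exactly the convexity inequality for $\|\cdot\|_{NL}^{2p}$, $p=2^*_{\alpha,m}$. This function is convex because it is a power of the norm of Lemma \ref{l17}. \emph{Almost disjointness} by itself does not give a one-sided bound. The single mixed term that convexity produces is bounded by $2p\,t^{2p-1}|v|_\infty K_\varepsilon$, where $K_\varepsilon=\int_\Omega\int_\Omega u_\varepsilon(x)^{p-1}u_\varepsilon(y)^{p}|x-y|^{-\alpha}\,dx\,dy=O(\varepsilon^{\frac{N-2m}{2}})$ by scaling.

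Every $O(\cdot)$ in your final bound needs $t$ and $|v|_2$ bounded uniformly in $\varepsilon$, and your argument does not establish this. The step \emph{so $t$ is bounded} already presupposes that $|v|_2$ is bounded. Your argument for bounding $|v|_2$ then fails for two reasons:
\begin{itemize}
\item[(i)] Squaring $t\|u_\varepsilon\|_{NL}\le 1+C|v|_2$ puts $A_\varepsilon C|v|_2^2$ into the upper bound for $m_{j,\varepsilon}$. This quadratic term has a coefficient tending to a positive constant, not a small one. So the $v$-terms are not at most linear, and nothing lets $(\lambda^m_j-\lambda)|v|_2^2$ dominate.
\item[(ii)] For $\lambda=\lambda^m_j$, which the statement allows, that negative coefficient is $0$, so there is no coercivity at all.
\end{itemize}
The paper has the same hole: its one-line \emph{this implies that $t<C_6$} after \eqref{e23} tacitly assumes $|v|_2$ is bounded.

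\textbf{Fix.} The missing step can be done with no a priori bound. Put $s=t\|u_\varepsilon\|_{NL}$. Since $\|u_\varepsilon\|_{NL}\ge c_0>0$ and $|v|_\infty\le C|v|_2$ on $W_j$, the convexity inequality reads
\[
1\ge s^{2p}-\theta\, s^{2p-1},\qquad \theta:=\frac{2p\,|v|_\infty K_\varepsilon}{\|u_\varepsilon\|_{NL}^{2p-1}}\le C|v|_2\,\varepsilon^{\frac{N-2m}{2}},
\]
with $C$ independent of $\varepsilon$, $v$ and $t$. If $s>1+\theta$, then $s^{2p-1}>1$ and $s-\theta>1$, which is a contradiction. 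Hence $s\le 1+\theta$.

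Now use the triangle inequality in the other direction, together with equivalence of norms on the fixed space $W_j$:
\[
c|v|_2\le\|v\|_{NL}\le\|v+tu_\varepsilon\|_{NL}+s\le 2+C|v|_2\,\varepsilon^{\frac{N-2m}{2}}.
\]
So $|v|_2\le 4/c$ for $\varepsilon$ small, and then $t\le 2/c_0$.

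With these uniform bounds you get $s^2\le(1+\theta)^2\le 1+|v|_2O(\varepsilon^{\frac{N-2m}{2}})$. Also $A_\varepsilon>0$ for small $\varepsilon$, and the cross term is $O(\varepsilon^{\frac{N-2m}{2}})|v|_2$. Collecting your three terms then yields \eqref{e21} exactly, for every $\lambda\in[\lambda^m_j,\lambda^m_{j+1})$.
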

\begin{proof}
\noi \begin{enumerate}
\item As we know that $\mathbf{P}_{j, \eps}$ is a finite dimensional space and $m_{\eps}$ is achieved at some $u_m \in \mathbf{P}_{j, \eps}$, that is,
\[m_{j, \eps}=\|D^m u_m\|_2^2-\la\left|u_m\right|_2^2 \quad \text { and } \quad\left\|u_m\right\|_{NL}=1.\]
Evidently, $u_m \not \equiv 0$. The definition of $\mathbf{P}_{j, \eps}$ yields
$u_m=v+t u_{\eps}$ for some $v \in {W}_j$ and $t \in \mb{R}$. Assume that $t \geq 0$, otherwise, if $t<0$ we can replace $u_m$ with $-u_m$. The result follows.
\item If $t=0$, then $u_m=v \in {W}_j$ and
\[m_{j,\eps}=\|D^{m}u_m\|_2^2-\la\left|u_m\right|_2^2=|D^{m}v|_2^2-\la|v|_2^2 \leq\left(\la^m_j-\la\right)|v|_2^2.\]
Now, for the case $t>0$ we have $v \in L^{\infty}(\Om)$, Since $e_1, \ldots, e_j \in L^{\infty}(\Om)$, By a direct computation, we get
\begin{align*}
\int_{B_{l_0 r}} \int_{B_{l_0 r}} &\frac{|u_{\eps}(x)|^{2_{\al, m}^*}|u_{\eps}(y)|^{2_{\al, m}^*-1}}{|x-y|^\al} \, dx \, dy \\
& =\int_{B_{l_0r}} \int_{B_{l_0r}}  \frac{|U_{\eps}(x)|^{2_{\al, m}^*}|U_{\eps}(y)|^{2_{\al, m}^*-1}}{|x-y|^\al} \, dx \, dy \\
&=[\eps^{\frac{N-2m}{2}}C_{N,M}^{\frac{N-2m}{m}}]^{22_{\al, m}^*-1} \int_{B_{l_0r}} \int_ {B_{l_0r}}\frac{1}{({\eps}^2+|x|^2)^{\frac{2N-\al}{2}}|x-y|^\al({\eps}^2+|y|^2)^{\frac{N-\al+2m}{2}}} \, dx \, dy \\
&= O\left(\eps^{\frac{N-2m}{2}}\right) \int_{B_{\frac{l_0r}{\eps}}} \int_ {B_{\frac{l_0r}{\eps}}}\frac{1}{(1+|x|^2)^{\frac{2N-\al}{2}}|x-y|^\al(1+|y|^2)^{\frac{N-\al+2m}{2}}} \, dx \, dy \\ 
& \leq O\left(\eps^{\frac{N-2m}{2}}\right) \int_{\mb{R}^N} \int_{\mb{R}^N} \frac{1}{(1+|x|^2)^{\frac{2N-\al}{2}}|x-y|^\al(1+|y|^2)^{\frac{N-\al+2m}{2}}} \, dx \, dy. \\
\end{align*}
Using the Hardy--Littlewood--Sobolev inequality, we get
\begin{align*}
\int_{B_{l_0r}} \int_{B_{l_0r}}& \frac{|u_{\eps}(x)|^{2_{\al, m}^*}|u_{\eps}(y)|^{2_{\al, m}^*-1}}{|x-y|^\al}\, dx \, dy \\
&\leq O\left(\eps^{\frac{N-2m}{2}}\right) \left(\int_{\mb{R}^N} \frac{1}{\left(1+|x|^2\right)^N}dx\right)^{\frac{2N-\al}{2N}}\left(\int_{\mb{R}^N}\frac{1}{\left(1+|y|^2\right)^{\frac{N(N-\al+2m)}{2N-\al}}} d y \right)^{\frac{2N-\al}{2N}}\\
&=O\left(\eps^{\frac{N-2m}{2}}\right).
\end{align*}
Therefore, we obtain
\begin{equation} \label{e40}
\int_{\Om} \int_{\Om} \frac{\left|u_{\eps}(x)\right|^{2_{\al, m}^*}\left|u_{\eps}(y)\right|^{2_{\al, m}^*}-1}{|x-y|^\al} \, dx \, dy \leq O\left(\eps^{\frac{N-2m}{2}}\right).
\end{equation}
Alternatively, performing a direct computation gives
\begin{align*}
\int_{B_{r}} \int_{B_{r}} &\frac{|u_{\eps}(x)|^{2_{\al, m}^*}|u_{\eps}(y)|^{2_{\al, m}^*-1}}{|x-y|^\al} \, dx \, dy \\
& =\int_{B_{r}} \int_{B_{r}}  \frac{|U_{\eps}(x)|^{2_{\al, m}^*}|U_{\eps}(y)|^{2_{\al, m}^*-1}}{|x-y|^\al} \, dx \, dy \\
&=[\eps^{\frac{N-2m}{2}}C_{N,M}^{\frac{N-2m}{m}}]^{22_{\al, m}^*-1} \int_{B_{r}} \int_ {B_{r}}\frac{1}{({\eps}^2+|x|^2)^{\frac{2N-\al}{2}}|x-y|^\al({\eps}^2+|y|^2)^{\frac{N-\al+2m}{2}}} \, dx \, dy \\
&= O\left(\eps^{\frac{N-2m}{2}}\right) \int_{B_{\frac{r}{\eps}}} \int_ {B_{\frac{r}{\eps}}}\frac{1}{(1+|x|^2)^{\frac{2N-\al}{2}}|x-y|^\al(1+|y|^2)^{\frac{N-\al+2m}{2}}} \, dx \, dy \\ 
& \geq O\left(\eps^{\frac{N-2m}{2}}\right) \int_{B_{r}} \int_{B_{r}}
\frac{1}{(1+|x|^2)^{\frac{2N-\al}{2}}|x-y|^\al(1+|y|^2)^{\frac{N-\al+2m}{2}}} \, dx \, dy \\
&= O\left(\eps^{\frac{N-2m}{2}}\right).
\end{align*}
Thus,
\begin{equation} \label{e41}
\int_{\Om} \int_{\Om} \frac{\left|u_{\eps}(x)\right|^{2_{\al, m}^*}\left|u_{\eps}(y)\right|^{2_{\al, m}^*}-1}{|x-y|^\al} \, dx \, dy \geq O\left(\eps^{\frac{N-2m}{2}}\right).
\end{equation}
Hence, by \eqref{e40} and \eqref{e41}, we obtain
\[\int_{\Om} \int_{\Om} \frac{\left|u_{\eps}(x)\right|^{2_{\al, m}^*}\left|u_{\eps}(y)\right|^{2_{\al, m}^*}-1}{|x-y|^\al} \, dx \, dy =O\left(\eps^{\frac{N-2m}{2}}\right).\]
Applying convexity, we get
\begin{align} \label{e23} \notag
1= & \int_{\Om} \int_{\Om} \frac{\left|u_m(x)\right|^{2_{\al, m}^*}\left|u_m(y)\right|^{2_{\al, m}^*}}{|x-y|^\al} \, dx \, dy \\ \notag
= & \int_{\Om} \int_{\Om} \frac{\left|v(x)+t u_\eps(x)\right|^{2_{\al, m}^*}\left|v(y)+t u_\eps(y)\right|^{2_{\al, m}^*}}{|x-y|^\al} \, dx \, dy \\ \notag
\geq & \int_{\Om} \int_{\Om} \frac{\left|t u_\eps(x)\right|^{2_{\al, m}^*}\left|t u_\eps(y)\right|^{2_{\al, m}^*}}{|x-y|^\al} \, dx \, dy+22_{\al, m}^* \int_{\Om} \int_{\Om} \frac{\left|t u_\eps(x)\right|^{2_{\al, m}^*-1} v(x)\left|t u_\eps(y)\right|^{2_{\al, m}^*}}{|x-y|^\al} \, dx \, dy \\ \notag
\geq & t^{22_{\al, m}^*} \int_{\Om} \int_{\Om} \frac{\left|u_\eps(x)\right|^{2_{\al, m}^*}\left|u_\eps(y)\right|^{2_{\al, m}^*}}{|x-y|^\al} \, dx \, dy-22_{\al, m}^* t^{22_{\al, m}^*-1}|v|_{\infty} \int_{\Om} \int_{\Om} \frac{\left|u_\eps(x)\right|^{2_{\al, m}^*-1}\left|u_\eps(y)\right|^{2_{\al, m}^*}}{|x-y|^\al} \, dx \, dy \\
\geq & t^{22_{\al, m}^*} \int_{\Om} \int_{\Om} \frac{\left|u_\eps(x)\right|^{2_{\al, m}^*}\left|u_\eps(y)\right|^{2_{\al, m}^*}}{|x-y|^\al} \, dx \, dy-C_6 t^{22_{\al, m}^*-1}|v|_2 O\left(\eps^{\frac{N-2m}{2}}\right),
\end{align}
Because all norms are equivalent on ${W}_j$, being a finite-dimensional space. This implies that $t<C_6$ for some constant $C_6>0$. Taking \eqref{e23} into account, we have
\[\int_{\Om} \int_{\Om} \frac{\left|t u_{\eps}(x)\right|^{2_{\al, m}^*}\left|t u_{\eps}(y)\right|^{2_{\al, m}^*}}{|x-y|^\al} \, dx \, dy \leq 1+O\left(\eps^{\frac{N-2m}{2}}\right)|v|_2.\]
Using \eqref{e22} and \eqref{e53}, we obtain 
\begin{align*}
m_{j, \eps}&=\|u_m\|^2 -\la|u_m|_2^2\\ 
& =\int_{\Om}|D^m(v+t u_{\eps})|^2 d x-\la \int_{\Om}|v+t u_{\eps}|^2 d x \\
& \leq\left(\la^m_j-\la\right)|v|_2^2+A_{\eps}\left(\int_{\Om} \int_{\Om} \frac{|t u_{\eps}(x)|^{2_{\al, m}^*}|t u_{\eps}(y)|^{2_{\al, m}^*}}{|x-y|^\al} \, dx \, dy\right)^{\frac{N-2m}{2N-\al}}+C_4|u_{\eps}|_1|v|_2 \\
& \leq\left(\la^m_j-\la\right)|v|_2^2+A_{\eps}\left(1+|v|_2 O\left(\eps^{\frac{N-2m}{2}}\right)\right)^{\frac{N-2m}{2N-\al}}+C_4|u_{\eps}|_1|v|_2 \\
& \leq\left(\la^m_j-\la\right)|v|_2^2+A_{\eps}\left(1+|v|_2 O\left(\eps^{\frac{N-2m}{2}}\right)\right)+O\left(\eps^{\frac{N-2m }{2}}\right)|v|_2.
\end{align*}\end{enumerate}
This completes the proof.\end{proof}
\begin{lemma} \label{l12} 
Let $\la \in\left(\la_j, \la^m_{j+1}\right)$ for some $j \in \mb{N}$, then for $u \in \mathbf{P}_{j,\eps}$, we have
\[\frac{\|u\|_2^2-\la|u|_2^2}{\|u\|_{N L}^2}<S_{H, L}\]
in the following cases:
\begin{enumerate}
\item If $N\geq4m$,
\item  $N\in[2m+1, 4m)$, $\la>\la^*$.
\end{enumerate}
\end{lemma}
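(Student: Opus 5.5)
By homogeneity of the quotient it suffices to consider $u\in\mathbf{P}_{j,\eps}$ with $\|u\|_{NL}=1$. Then the claim is equivalent to
\[
m_{j,\eps}<S_{H,L}\qquad\text{for all } \eps>0 \text{ small enough}.
\]
The plan is to combine the estimate \eqref{e21} of the previous lemma with the bound on $A_\eps$ from Lemma \ref{l18}. Since $\la>\la^m_j$ strictly, the coefficient $\la^m_j-\la$ is negative. We split according to whether $t=0$ or $t>0$ in the decomposition $u_m=v+tu_\eps$.

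If $t=0$, then \eqref{e21} gives $m_{j,\eps}\le(\la^m_j-\la)|v|_2^2\le 0<S_{H,L}$, and this case is done.

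If $t>0$, set $\delta:=\la-\la^m_j>0$ and $a:=|v|_2\ge0$. From \eqref{e21},
\[
m_{j,\eps}\le -\delta a^2+A_\eps+\bigl(A_\eps+1\bigr)O\bigl(\eps^{\frac{N-2m}{2}}\bigr)a .
\]
The proof of Lemma \ref{l18} shows $A_\eps\le S_{H,L}+O(\eps^{\min\{N-2m,N-\al/2\}})$, so $A_\eps$ is bounded uniformly in small $\eps$. We then maximize the quadratic $-\delta a^2+Ba$ in $a$, where $B=O(\eps^{\frac{N-2m}{2}})$, and get the bound $B^2/(4\delta)=O(\eps^{N-2m})$. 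Hence
\[
m_{j,\eps}\le A_\eps+O(\eps^{N-2m}).
\]
It remains to check that the gain in $A_\eps$ beats this $O(\eps^{N-2m})$ error.

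When $N\ge4m$, Lemma \ref{l18} gives a gain of $-\la C_3\eps^{2m}$ for $N>4m$, or $-\la C_2\eps^{2m}|\ln\eps|$ for $N=4m$. For $N>4m$ we have $2m<N-2m$. For $N=4m$ the error is $O(\eps^{2m})$ while the gain is $\eps^{2m}|\ln\eps|$. In both cases $m_{j,\eps}<S_{H,L}$ for $\eps$ small.

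When $N\in[2m+1,4m)$, the gain is $-\la C_1\eps^{N-2m}$, which is of the same order as the error. The error constant from the quadratic maximization is $C/\delta$ and does not grow with $\la$. So for $\la>\la^*$, enlarging $\la^*$ if necessary, $\la C_1$ exceeds the sum of all $O(\eps^{N-2m})$ constants. The term $O(\eps^{N-\al/2})$ is also harmless here, since $N-\al/2>N-2m$ exactly when $\al<4m$.

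The main obstacle is this second dimensional range. There the cross term $|u_\eps|_1|v|_2$ produces an error of exactly the critical order $\eps^{N-2m}$, and we must verify that its constant stays uniform as $\la$ grows. That constant depends on $\delta=\la-\la^m_j$, on $j$, and on the $L^\infty$–$L^2$ norm equivalence on $W_j$. I would first fix $j$ and $\la$ in $(\la^m_j,\la^m_{j+1})$ and track that $\delta^{-1}$ only improves as $\la$ increases within the interval. If $\al\ge4m$, the $\eps^{N-\al/2}$ term may also need to be absorbed into the constraint on $\la^*$.
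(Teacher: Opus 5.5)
Your route is the paper's own. You reduce to $m_{j,\eps}<S_{H,L}$, split into $t=0$ and $t>0$ via \eqref{e21}, and absorb the linear term through $-\delta a^2+Ba\le B^2/(4\delta)$; your $B^2/(4\delta)$ is the correctly signed form of \eqref{e24}. For $N\ge 4m$ this closes. There $\la$, hence $\delta=\la-\la^m_j>0$, is fixed before $\eps\to0$, the error $O(\eps^{N-2m})/\delta$ is of strictly higher order than the gain $\la C_3\eps^{2m}$ (resp.\ $\la C_2\eps^{2m}|\ln\eps|$), and $N-\al/2>N/2\ge 2m$ handles the denominator.

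The gap is case (2). There the gain $\la C_1\eps^{N-2m}$ and your error $B^2/(4\delta)$ have the same order, so everything rests on the coefficients. Your claim that, after enlarging $\la^*$, $\la C_1$ beats the error coefficient for every non-eigenvalue $\la>\la^*$ is false. Write $B=B_0\eps^{\frac{N-2m}{2}}$; the coefficient is then $B_0^2/(4(\la-\la^m_j))$.
\begin{itemize}
\item It blows up as $\la$ decreases to any eigenvalue $\la^m_j>\la^*$.
\item $B_0$ contains the constant $C_j$ of $|v|_\infty\le C_j|v|_2$ on $W_j$, which grows with $j$.
\item The cross term hidden in $C_4|u_\eps|_1|v|_2$, namely $\sum_{i\le j}l_i(\la^m_i-\la)\int_\Om e_iu_\eps\,dx$, carries a factor of size $\la$.
\end{itemize}
Fixing $j$ and noting that $\delta^{-1}$ improves as $\la$ moves right in the gap only gives the estimate on subintervals $(\la^m_j+\eta_j,\la^m_{j+1})$, which may be empty.

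This is not a bookkeeping artifact. Put $Q(u)=\int_\Om|D^mu|^2dx-\la|u|_2^2$ and test with $u=u_\eps+\sigma e_j\in\mathbf{P}_{j,\eps}$, where $\sigma=-\tau\operatorname{sgn}(e_j(0))\eps^{\frac{N-2m}{2}}$. Then $Q(u)=Q(u_\eps)-\delta\bigl(\sigma^2+2\sigma\int_\Om e_ju_\eps\,dx\bigr)$ decreases only by $O(\delta(\tau^2+\tau))\eps^{N-2m}$. Meanwhile $\|u\|_{NL}^{2\cdot2^*_{\al,m}}$ decreases by $c\,|e_j(0)|\tau\eps^{N-2m}(1+o(1))$. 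This holds because its first variation at $u_\eps$ in direction $e_j$ is $\approx c\,e_j(0)\eps^{\frac{N-2m}{2}}$ with $c>0$ (same scaling as \eqref{e40}--\eqref{e41}), and I expect the second-order remainder to be $o(\eps^{N-2m})$. Use $\int_\Om|D^mu_\eps|^2dx\ge S_{H,L}\|u_\eps\|_{NL}^2$ and $|u_\eps|_2^2=O(\eps^{N-2m})$, then optimize in $\tau$. This gives $m_{j,\eps}\ge S_{H,L}+\eps^{N-2m}\bigl(c'e_j(0)^2/\delta-C\la-C'\bigr)+o(\eps^{N-2m})$. So if $e_j(0)\ne0$, the asserted inequality fails for all small $\eps$ once $\la$ is close enough to $\la^m_j$.

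The paper's own chain for $2m+1\le N<4m$ does not avoid this. It drops $(\la^m_j-\la)|v|_2^2$ and then discards the positive term $|v|_2O(\eps^{\frac{N-2m}{2}})$ between its third and fourth lines, which is exactly where $1/\delta$ should appear. So case (2) cannot be proved as stated. What your argument does give is the estimate for those $\la\in(\la^m_j,\la^m_{j+1})$ at which $\la C_1$ dominates the explicitly $\delta$- and $j$-dependent error coefficient. Your worry about $\al\ge 4m$, by contrast, is vacuous: in this range $\al<N<4m$.
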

\begin{proof}
To give the proof, we only need to verify that
\[m_{j, \eps}=\max _{u \in \mathbf{P}_{j, \eps}, \|u\|_{NL}=1}\left(\int_{\Om}|D^m u|^2 d x-\la \int_{\Om}|u|^2 d x\right)<S_{H, L}.\]
If $t=0$ in \eqref{e21}, by the choice of $\la \in\left(\la^m_j, \la^m_{j+1}\right)$, we get that
\[m_{j,\eps} \leq\left(\la^m_j-\la\right)|v|_2^2<0<S_{H, L .}\]
Now we take $t>0$ and discuss the cases $N \geq 4m+1$, $N=4m$ and $N\in[2m+1,4m)$   separately.\\
If $N \geq 4m+1$, we have
\begin{align} \notag
m_{j, \eps} & \leq\left(\la^m_j-\la\right)|v|_2^2+\frac{\|u_{\eps}\|_2^2-\la\left|u_{\eps}\right|_2^2}{\left\|u_{\eps}\right\|_{N L}^2}\left(1+|v|_2 O\left(\eps^{\frac{N-2m}{2}}\right)\right)+O\left(\eps^{\frac{N-2m}{2}}\right)|v|_2 \\ \notag
& \leq\left(\la^m_j-\la\right)|v|_2^2+\frac{C(N, \al)^{\frac{N(N-2m)}{2m(2N-\al)}} S_{H, L}^{\frac{N}{2m}}-\la C_3 \eps^{2m}}{\left(C(N, \al)^{\frac{N}{2m}} S_{H, L}^{\frac{2N-\al}{2m}}-O\left(\eps^{N-\frac{\al}{2}}\right)\right)^{\frac{N-2m}{2N-\al}}}\left(1+|v|_2 O\left(\eps^{\frac{N-2m}{2}}\right)\right)+O\left(\eps^{\frac{N-2m}{2}}\right)|v|_2 \\ \notag
&\leq(S_{H, L}-\la C_3 \eps^{2m} + O\left(\eps^{N-2m}\right))\left(1+|v|_2 O\left(\eps^{\frac{N-2m}{2}}\right)\right)+\left(\la^m_j-\la\right)|v|_2^2+O\left(\eps^{\frac{N-2m}{2}}\right)|v|_2 \\  \notag
& \leq S_{H, L}-\la C_3 \eps^{2m}+\left(\la^m_j-\la\right)|v|_2^2+O\left(\eps^{\frac{N-2m}{2}}\right)|v|_2, \\ \notag
\end{align}
for $\eps>0$ sufficiently small. As $\la \in\left(\la^m_j, \la^m_{j+1}\right)$, we have
\begin{equation} \label{e24}
\left(\la^m_j-\la\right)|v|_2^2+O\left(\eps^{\frac{N-2m}{2}}\right)|v|_2 \leq \frac{1}{4\left(\la^m_j-\la\right)} O\left(\eps^{N-2m}\right)=O\left(\eps^{N-2m}\right).
\end{equation}
Therefore
\[m_{j, \eps} \leq S_{H, L}-\la C_3\eps^{2m}+O\left(\eps^{N-2m}\right)<S_{H, L},\]
for $\eps>0$ sufficiently small.\\

\noi If $N=4m$, equation \eqref{e24}, yield
\begin{align*}
m_{j,\eps} & \leq\left(\la^m_j-\la\right)|v|_2^2+\frac{\|u_{\eps}\|_2^2-\la\left|u_{\eps}\right|_2^2}{\left\|u_{\eps}\right\|_{N L}^2}\left(1+|v|_2 O\left(\eps^{\frac{N-2m}{2}}\right)\right)+O\left(\eps^{\frac{N-2m}{2}}\right)|v|_2 \\
& \leq\left(\la^m_j-\la\right)|v|_2^2+\frac{C(N, \al)^{\frac{N(N-2m)}{2m(2N-\al)}} S_{H, L}^{\frac{N}{2m}}+O\left(\eps^{N-2m}\right)-\la C_2 |ln{\eps}| \eps^{2m} -O\left(\eps^{2m}\right)}{\left(C(N, \al)^{\frac{N}{2m}} S_{H, L}^{\frac{2N-\al}{2m}}-O\left(\eps^{N-\frac{\al}{2}}\right)\right)^{\frac{N-2m}{2N-\al}}}\left(1+|v|_2 O\left(\eps^{\frac{N-2m}{2}}\right)\right)\\& \qquad+O\left(\eps^{\frac{N-2m}{2}}\right)|v|_2 \\
&\leq\left(S_{H, L} +O\left(\eps^{2m}\right) -\la C_2 |ln{\eps}| \eps^{2m}\right)\left(1+|v|_2 O\left(\eps^{m}\right)\right)+\left(\la^m_j-\la\right)|v|_2^2+O\left(\eps^{m}\right)|v|_2 \\
&\leq S_{H, L}-\la C_2 \eps^{2m}+ O\left(\eps^{2m}\right) \\
&<S_{H, L},
\end{align*}
for $\eps>0$ sufficiently small.\\

\noi If $2m+1\leq N< 4m$, we have
\begin{align*}
m_{j,\eps} & \leq\left(\la^m_j-\la\right)|v|_2^2+\frac{\|u_{\eps}\|_2^2-\la\left|u_{\eps}\right|_2^2}{\left\|u_{\eps}\right\|_{NL}^2}\left(1+|v|_2 O\left(\eps^{\frac{N-2m}{2}}\right)\right)+O\left(\eps^{\frac{N-2m}{2}}\right)|v|_2 \\
& \leq\left(S_{H, L}-\la C_1\eps^{N-2m} \right)\left(1+|v|_2 O\left(\eps^{\frac{N-2m}{2}}\right)\right)+\left(\la^m_j-\la\right)|v|_2^2+O\left(\eps^{\frac{N-2m}{2}}\right)|v|_2 \\
& \leq S_{H, L}+|v|_2 O\left(\eps^{\frac{N-2m}{2}}\right)-\la C_1\eps^{N-2m}-|v|_2 O\left(\eps^{\frac{3(N-2m)}{2}}\right)\\
&\leq S_{H, L}-\la C_1\eps^{N-2m}-|v|_2 O\left(\eps^{\frac{3(N-2m)}{2}}\right)\\
& <S_{H, L} ,
\end{align*}
for $\eps>0$ sufficiently small, since $\la>\la^*$ and $\la \in\left(\la^m_j, \la^m_{j+1}\right)$. The result follows.   
\end{proof}
\noi {\bf{Proof of Theorem (\ref{t4})}}\\
\begin{itemize}
\item For $N\geq4m$ and $\la >\la^m_1$. By the definition of $\mathbf{P}_{j,\eps}$ we have
\[u_m=\bar{v}+t z_{ \eps}\]
where
\[\bar{v}=v+t \sum_{i=1}^j\left(\int_{\Om} u_{\eps}e_i d x\right) e_i \in W_j\]
and
\[z_{\eps}=u_{\eps}-\sum_{i=1}^j\left(\int_{\Om} u_{\eps}e_i d x\right) e_i.\]
So, $\bar{v}$ and $z_\eps$ are orthogonal in $L^2(\Om)$. This implies that
$|u_m|_2^2=|\bar{v}|_2^2+t^2 |z_\eps|_2 ^2$.
Then
\[\mathbf{P}_{j,\eps}={W}_j \oplus \mb{R} z_{\eps}.\]
We apply Lemma \ref{l10} and also the functional $\mc{I}_\la$ satisfies the geometric structure of the linking Theorem (see \cite{6rabinowitz1986minimax}, Theorem 5.3), that is,

\begin{gather*}
\inf _{u \in \mb{E}_{j+1}, \|u\|=\rho} \mc{I}_\la(u) \geq \mu>0, \\
\sup _{u \in \mb{Y}_ j} \mc{I}_\la(u)<0
\end{gather*}
and
\[\sup _{u \in \mathbf{P}_{j,\eps},\|u\| \geq R} \mc{I}_\la(u) \leq 0,\]
where $\mu$ and $R$ are defined in Lemma \ref{l10}. We define the linking critical level of $\mc{I}_\la$, that is,
\[
c=\inf _{\gamma \in \Gamma} \max _{u \in V} \mc{I}_\la(\gamma(u))>0,
\]
where
\[\Gamma:=\left\{\gamma \in C\left(\overline{V}, H_0^m(\Om)\right): \gamma=i d \text { on } \partial V\right\}\]
and
\[V:=\left(\overline{B_R} \cap {W}_{\jmath}\right) \oplus\left\{r z_\eps: r \in(0, R)\right\}.\]
For any $\gamma \in \Gamma$, we have
\[c\leq \max _{u \in V} \mc{I}_\la(\gamma(u)).\]
In particular, if we take $\gamma=i d$ on $\overline{V}$, then
\[c \leq \max _{u \in V} \mc{I}_\la(u) \leq \max _{u \in \mathbf{P}_{j,\eps}} \mc{I}_\la(u).\]
Also for any $u \in \mb{H}_0^m(\Om) \backslash\{0\}$,
\[\max _{t \geq 0} \mc{I}_\la(t u)=\frac{N+2m-\al}{4 N-2\al}\left(\frac{\|u\|_2^2-\la|u|_2^2}{\|u\|_{N L}^2}\right)^{\frac{2N-\al}{N+2m-\al}}.\]
Since $\mathbf{P}_{j,\eps}$ is a linear space, therefore we have
\[\max _{u \in \mathbf{P}_{j,\eps}} \mc{I}_\la(u)=\max _{u \in \mathbf{P}_{j,\eps}, t \neq 0} \mc{I}_\la\left(|t| \frac{u}{|t|}\right)=\max _{u \in \mathbf{P}_{j,\eps}, t>0} \mc{I}_\la(t u) \leq \max _{u \in \mathbf{P}_{j,\eps}, t \geq 0} \mc{I}_\la(t u).\]
Thus, by Lemma \ref{l12} we have
\begin{align*}
c & \leq \max _{u \in \mathbf{P}_{j,\eps},t \geq 0} \mc{I}_\la(tu) \\
& =\max _{u \in \mathbf{P}_{j,\eps}} \frac{N+2m-\al}{4 N-2\al}\left(\frac{\|u\|_2^2-\la|u|_2^2}{\|u\|_{N L}^2}\right)^{\frac{2N-\al}{N+2m-\al}} \\
& <\frac{N+2m-\al}{4N-2\al} S_{H, L}^{\frac{2N-\al}{N+2m-\al}}.
\end{align*}
Therefore, by  the linking Theorem and Lemma \ref{l4}, we conclude that problem \eqref{e5} has a nontrivial solution $u \in \mb{H}_0^m(\Om)$ with critical value $c\geq \mu$.\\

\item For $N\in[2m+1,4m)$ and $\la>\la^*$. We consider the two cases: 
\begin{enumerate}
\item $\la^m_1>\la^*$
\item $\la^m_1\leq\la^*$
\end{enumerate} 
{\bf{Case 1. $\la^m_1>\la^*$.}}\\
For this case, we will use the Mountain Pass Theorem if $\la \in\left(\la^*, \la^m_1\right)$ while the Linking Theorem if $\la \in\left(\la^m_j, \la^m_{j+1}\right)$ for some $j \in \mb{N}$.\\
If $\la \in (\la^*, \la^m_1)$, by Lemma \ref{l19} and the mountain pass theorem without $(PS)$ condition \cite{4willem1996minimax}, there exists a (PS) sequence $\{u_n\}$ such that $\mc{I}_\la(u_n) \to c$ and $\mc{I}_\la^{\prime}(u_n) \to  0$ in $H_0^m(\Om)^{-1}$ at the Mountain Pass level $c$. From Lemma \ref{l18}, we have there exists $v \in \mb{H}_0^m(\Om) \backslash\{0\}$ such that
\[\frac{\|v\|_2^2-\la|v|_2^2}{\|v\|_{N L}^2}<S_{H, L}.\]
Thus,
\begin{align*}
0<\max _{t \geq 0} \mc{I}_\la(t v) & =\max _{t \geq 0}\left\{\frac{t^2}{2} \int_{\Om}|D^m v|^2 d x-\frac{\la t^2}{2} \int_{\Om} |v|^2 d x-\frac{t^{22_{\al, m}^*}}{22_{\al, m}^*} \int_{\Om} \int_{\Om} \frac{|v(x)|^{2_{\al, m}^*}|v(y)|^{2_{\al, m}^*}}{|x-y|^\al} \, dx \, dy\right\} \\
& <\frac{N+2m-\al}{4N-2\al}S_{H, L}^{\frac{2N-\al}{N+2m-\al}}.
\end{align*}
By the definition of $c$, we know $c<\frac{N+2m-\al}{4N-2\al}S_{H, L}^{\frac{2N-\al}{N+2m-\al}}$.\\
From Lemma \ref{l4}, we obtain $\{u_n\}$ contains a convergent subsequence. So, we have $\mc{I}_\la$ has a critical value $c\in\left(0,\frac{N+2m-\al}{4N-2\al}S_{H, L}^{\frac{2N-\al}{N+2m-\al}}\right)$ and problem \eqref{e5} has a nontrivial solution.\\
If $\la \in (\la^m_j, \la^m_{j+1})$ for some $j \in \mb{N}$, the proof follows similarly as in the above case $N\geq4m$ and $\la>\la^m_1$.
Therefore, the Linking Theorem and Lemma \ref{l4}
yield that problem \eqref{e5} admits a solution $u \in \mb{H}_0^m(\Om)$ with critical value $c \geq \mu$. Since $\mu>0=\mc{I}_\la(0)$, we deduce that $u$ is not identically zero.\\
{\bf{Case $2. \quad\la^m_1 \leq\la^*$}}.\\
In this case, we only consider $\la \in (\la^m_j, \la^m_{j+1})$ for some $j \in \mb{N}$ and $\la>\la^*$. We can argue as in the last part of Case 1. In this way, we get that for any $\la>\la^*$ different from an eigenvalue of $(-\Delta)^m$, problem \eqref{e5} admits a solution $u \in \mb{H}_0^m(\Om)$ with critical value $c\geq \mu$ and $u$ is not identically zero.\\
\end{itemize}
\section{\bf{Perturbation with a superlinear nonlocal term}}
\noi In the last section, we show the existence of a solution to the following problem,
\[
\left\{\begin{aligned}
(-\Delta)^m u&=\left(\int_{\Om} \frac{|u|^{2_{\al, m}^*}}{|x-y|^\al} d y\right)|u|^{2_{\al, m}^*-2} u+\la\left(\int_{\Om} \frac{|u|^q}{|x-y|^\al} d y\right)|u|^{q-2} u \,&& \text { in } \Om, \\
\nabla^k u&=0\ \forall \, \,  |k|\leq m-1 \,&& \text{ on}\ \partial \Om  .\end{aligned}\right.
\]
We know that the domain of this problem is bounded, so the Sobolev embedding and the Hardy-Littlewood-Sobolev inequality imply that the integral
$\int_{\Om} \int_{\Om} \frac{|u(x)|^q|u(y)|^q}{|x-y|^\al} \, dx \, dy,$
is well defined if $\frac{2N-\al}{2N} \leq q < \frac{2N-\al}{N-2m}=2_{\al, m}^*.$\\
Now, we introduce the energy functional
\[
\mc{I_\la}(u)=\frac{1}{2} \int_{\Om}|D^m u|^2 d x-\frac{1}{22_{\al, m}^*} \int_{\Om} \int_{\Om} \frac{|u(x)|^{2_{\al, m}^*}|u(y)|^{2_{\al, m}^*}}{|x-y|^\al} \, dx \, dy-\frac{\la}{2q} \int_{\Om} \int_{\Om} \frac{|u(x)|^q|u(y)|^q}{|x-y|^\al} \, dx \, dy,
\]
we can observe that the given energy functional belongs to $C^1(H_0^m(\Om), \mb{R})$. Moreover, $u$ is a weak solution of \eqref{e6} if and only if $u$ is a critical point of functional $\mc{I_\la}$.\\
We first show that $\mc{I_\la}$ satisfies mountain pass geometry.
\begin{lemma}\label{l6}
If $1<q<2_{\al, m}^*$ and $\la>0$, then, the functional $\mc{I_\la}$ satisfies the following properties:
\begin{enumerate}
\item There exist $\mu, \rho>0$ such that $\mc{I_\la}(u) \geq \mu$ for $\|u\|=\rho$.
\item There exists $e \in H_0^m(\Om)$ with $\|e\|>\rho$ such that $\mc{I_\la}(e)<0$.
\end{enumerate}
\end{lemma}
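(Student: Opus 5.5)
The plan mirrors Lemma \ref{l1}: bound each nonlocal term from above by a power of $\|u\|$, and then show that along any ray $tw$ the critical term eventually dominates.

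\textbf{Step 1: the critical term.} By the definition \eqref{e4} of $S_{H,L}$ (restricting to $\mb{H}_0^m(\Om)\subset \mb{H}_0^m(\mb{R}^N)$) we have
\[
\int_{\Om}\int_{\Om}\frac{|u(x)|^{2_{\al,m}^*}|u(y)|^{2_{\al,m}^*}}{|x-y|^\al}\,dx\,dy\leq S_{H,L}^{-2_{\al,m}^*}\|u\|^{22_{\al,m}^*}.
\]

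\textbf{Step 2: the subcritical term.} For the term with exponent $q$, I apply the Hardy--Littlewood--Sobolev inequality \eqref{e2} with $t=s=\frac{2N}{2N-\al}$. This gives
\[
\int_{\Om}\int_{\Om}\frac{|u(x)|^{q}|u(y)|^{q}}{|x-y|^\al}\,dx\,dy\leq C(N,\al)\,|u|_{\frac{2Nq}{2N-\al}}^{2q}.
\]
Since $q<2_{\al,m}^*$, we get $\frac{2Nq}{2N-\al}<2_m^*$. As $\Om$ is bounded, Hölder's inequality and the Sobolev embedding then yield the bound $C_7\|u\|^{2q}$. The exponent $\frac{2Nq}{2N-\al}$ might fall below $1$; this causes no trouble, because on a bounded domain one can still dominate by the $L^{2_m^*}$ norm, and in any case $q>1$ keeps it above $\frac{2N}{2N-\al}>1$.

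\textbf{Step 3: part (1).} Combining Steps 1 and 2,
\[
\mc{I_\la}(u)\geq \frac12\|u\|^2-\frac{\la C_7}{2q}\|u\|^{2q}-\frac{C_5}{22_{\al,m}^*}\|u\|^{22_{\al,m}^*}.
\]
Because $q>1$, we have $2q>2$; and $22_{\al,m}^*>2$. So for $\rho>0$ small enough the right-hand side at $\|u\|=\rho$ is at least $\mu:=\rho^2/4>0$, which proves (1).

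\textbf{Step 4: part (2).} Fix $w\in \mb{H}_0^m(\Om)\setminus\{0\}$ and write
\[
\mc{I_\la}(tw)=\frac{t^2}{2}\|w\|^2-\frac{t^{22_{\al,m}^*}}{22_{\al,m}^*}\|w\|_{NL}^{22_{\al,m}^*}-\frac{\la t^{2q}}{2q}\int_{\Om}\int_{\Om}\frac{|w(x)|^q|w(y)|^q}{|x-y|^\al}\,dx\,dy.
\]
The last term is nonnegative because $\la>0$, so dropping it only increases the right-hand side. The coefficient $\|w\|_{NL}^{22_{\al,m}^*}$ is positive since $w\neq0$, and $22_{\al,m}^*>2$. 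Hence $\mc{I_\la}(tw)\to-\infty$ as $t\to\infty$, and we take $e=t_1w$ with $t_1$ large so that $\|e\|>\rho$ and $\mc{I_\la}(e)<0$.

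\textbf{Main obstacle.} The only step needing care is checking the exponent range in the HLS–Sobolev estimate of Step 2; the rest is the same argument as Lemma \ref{l1}.
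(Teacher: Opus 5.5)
Your proposal is correct and follows the paper's route. The paper only says the proof is as in Lemma~\ref{l1}: bound the critical term through $S_{H,L}$ and the subcritical nonlocal term through Hardy--Littlewood--Sobolev plus Sobolev embedding (using $\frac{2Nq}{2N-\al}<2_m^*$), obtaining $\mc{I}_\la(u)\geq \frac12\|u\|^2-C\|u\|^{2q}-C\|u\|^{22_{\al,m}^*}$ with $2q>2$, and then send $t\to\infty$ along a ray — your Step~2 just makes explicit the exponent check the paper leaves implicit.
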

\begin{proof}
The proof follows in a similar manner as in Lemma \ref{l1}.   
\end{proof}
\begin{lemma}\label{l7}
If $1<q<2_{\al, m}^*$ and $\la>0$ and $\{u_n\}$ is a $(P S)_c$ sequence of $\mc{I_\la}$, then $\{u_n\}$ is bounded. Moreover, if there exist $u_0\in H_0^m(\Om)$ be the weak limit of $\{u_n\}$, then $u_0$ is a weak solution of problem \eqref{e6} and $\mc{I_\la}(u_0)\geq0$.
\end{lemma}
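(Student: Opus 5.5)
I will follow the scheme of Lemma \ref{l2}, with the local term $\int_\Om|u|^{q+1}dx$ replaced by the nonlocal term
\[
B(u):=\int_\Om\int_\Om \frac{|u(x)|^q|u(y)|^q}{|x-y|^\al}\,dx\,dy ,
\qquad
D(u):=\int_\Om\int_\Om \frac{|u(x)|^{2^*_{\al,m}}|u(y)|^{2^*_{\al,m}}}{|x-y|^\al}\,dx\,dy .
\]
\textbf{Boundedness.} The natural multiplier is $\frac{1}{2q}$, which matches the homogeneity $2q$ of $B$:
\[
\mc I_\la(u_n)-\frac{1}{2q}\langle \mc I_\la'(u_n),u_n\rangle=\Big(\frac12-\frac1{2q}\Big)\|u_n\|^2+\Big(\frac1{2q}-\frac1{22^*_{\al,m}}\Big)D(u_n).
\]
Since $1<q<2^*_{\al,m}$, both coefficients are positive. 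The $(PS)_c$ property bounds the left side by $C'(1+\|u_n\|)$, so $\{u_n\}$ is bounded in $\mb H^m_0(\Om)$.

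\textbf{Passing to the limit.} Up to a subsequence, $u_n\weak u_0$ in $\mb H^m_0(\Om)$ and $u_n\to u_0$ a.e. The critical term is handled exactly as in Lemma \ref{l2}. Boundedness in $L^{2^*_m}$ gives $|u_n|^{2^*_{\al,m}}\weak |u_0|^{2^*_{\al,m}}$ in $L^{\frac{2N}{2N-\al}}$. The Riesz potential is continuous from $L^{\frac{2N}{2N-\al}}$ to $L^{\frac{2N}{\al}}$, so it preserves this weak convergence. Combining with $|u_n|^{2^*_{\al,m}-2}u_n\weak |u_0|^{2^*_{\al,m}-2}u_0$ gives convergence of the critical part of $\langle\mc I_\la'(u_n),\phi\rangle$ for $\phi\in C_c^\infty(\Om)$.

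For the subcritical nonlocal term, $q<2^*_{\al,m}$ means $\frac{2Nq}{2N-\al}<2^*_m$. By the compact Sobolev embedding on the bounded domain, $u_n\to u_0$ strongly in $L^{\frac{2Nq}{2N-\al}}(\Om)$. Hence $|u_n|^q\to|u_0|^q$ strongly in $L^{\frac{2N}{2N-\al}}$, and by Hardy--Littlewood--Sobolev the convolutions $|x|^{-\al}*|u_n|^q$ converge strongly in $L^{\frac{2N}{\al}}$. Also $|u_n|^{q-2}u_n\phi\to|u_0|^{q-2}u_0\phi$ in $L^{\frac{2N}{2N-\al}}$, by the same strong convergence and the generalized dominated convergence theorem (or Vitali). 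Note that for $1<q<2$ the map $t\mapsto|t|^{q-2}t$ is still continuous. Letting $n\to\infty$ in $\langle \mc I_\la'(u_n),\phi\rangle\to0$ shows that $u_0$ is a weak solution, and density extends this to all $\phi\in \mb H^m_0(\Om)$.

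\textbf{Sign of the energy.} Test the equation with $u_0$ to get $\|u_0\|^2=D(u_0)+\la B(u_0)$. Substituting into $\mc I_\la(u_0)$ gives
\[
\mc I_\la(u_0)=\Big(\frac12-\frac1{22^*_{\al,m}}\Big)D(u_0)+\la\Big(\frac12-\frac1{2q}\Big)B(u_0)\ge0,
\]
because $q>1$ and $2^*_{\al,m}>1$.

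\textbf{Main obstacle.} The delicate step is the strong convergence of the subcritical nonlocal term. This needs the exponent check $\frac{2Nq}{2N-\al}<\frac{2N}{N-2m}$, which is equivalent to $q<2^*_{\al,m}$, together with the continuity of the Hardy--Littlewood--Sobolev map. Once that is verified, the rest is routine. The strong convergence also gives $B(u_n)\to B(u_0)$, which will be needed later in the compactness lemma.
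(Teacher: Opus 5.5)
Your proposal is correct and follows the paper's approach: the paper's proof of this lemma only says it follows as in Lemma \ref{l2}, and your adaptation is what that reduction amounts to. The multiplier $\frac{1}{2q}$ cancels the subcritical nonlocal term in the boundedness estimate, compact embedding plus Hardy--Littlewood--Sobolev handles that term in the limit, and testing with $u_0$ gives $\mc{I}_\la(u_0)\geq 0$.

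There is one small point, which you inherit from Lemma \ref{l2}. Weak convergence of both factors does not by itself give convergence of their product. For the critical term it is cleaner to argue as you did for the subcritical one. By Vitali, $|u_n|^{2^*_{\al,m}-2}u_n\phi\to|u_0|^{2^*_{\al,m}-2}u_0\phi$ strongly in $L^{\frac{2N}{2N-\al}}(\Om)$, because $\frac{2N}{2N-\al}<\frac{2N}{N+2m-\al}$ when $N>2m$. Pairing this with $|x|^{-\al}*|u_n|^{2^*_{\al,m}}\weak|x|^{-\al}*|u_0|^{2^*_{\al,m}}$ in $L^{\frac{2N}{\al}}$ then gives the limit.
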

\begin{proof}
The proof follows the same as in Lemma \ref{l2}.\\

\end{proof}
\begin{lemma}\label{l8}
If $1<q<2_{\al, m}^*$, $\la>0$ and $\{u_n\}$ is a $(P S)_c$ sequence of $\mc{I}_\la$ with
\[c<\frac{N+2m-\al}{4 N-2 \al} S_{H, L}^{\frac{2N-\al}{N+2m-\al}},\]
then $\{u_n\}$ has a convergent subsequence.
\end{lemma}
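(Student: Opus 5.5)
We follow the scheme of Lemma \ref{l4}, with the local term $\int_\Om |u|^{q+1}$ replaced by the subcritical nonlocal term
\[
\mathcal{B}(u):=\int_\Om\int_\Om \frac{|u(x)|^q|u(y)|^q}{|x-y|^\al}\,dx\,dy .
\]

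\emph{Step 1 (extraction of the limit).} By Lemma \ref{l7}, the sequence $\{u_n\}$ is bounded. Up to a subsequence, $u_n\weak u_0$ in $\mb{H}_0^m(\Om)$, $u_n\to u_0$ in $L^s(\Om)$ for $1\le s<2^*_m$, and $u_n\to u_0$ a.e. Moreover, $u_0$ is a weak solution of \eqref{e6} with $\mc{I}_\la(u_0)\ge0$. Set $v_n:=u_n-u_0$.

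\emph{Step 2 (the subcritical nonlocal term).} We claim $\mathcal{B}(u_n)\to\mathcal{B}(u_0)$ and $\mathcal{B}(v_n)\to0$. Since $q<2^*_{\al,m}$, we have $\frac{2Nq}{2N-\al}<2^*_m$. Hence $|u_n|^q\to|u_0|^q$ strongly in $L^{\frac{2N}{2N-\al}}(\Om)$, by compact embedding together with the continuity of the Nemytskii map. The Hardy--Littlewood--Sobolev inequality \eqref{e2} then gives
\[
\mathcal{B}(u_n)\to\mathcal{B}(u_0),\qquad \mathcal{B}(v_n)\le C|v_n|_{\frac{2Nq}{2N-\al}}^{2q}\to0 .
\]
The same reasoning shows that the derivative term $\int_\Om\int_\Om \frac{|u_n(y)|^q|u_n(x)|^{q-2}u_n(x)\,u_n(x)}{|x-y|^\al}$ converges to its value at $u_0$. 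This step replaces the elementary $L^{q+1}$ convergence used before, and it is the only place where the nonlocal perturbation requires care. The main point to verify is that $q<2^*_{\al,m}$ indeed gives $\frac{2Nq}{2N-\al}<2^*_m$, which is immediate.

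\emph{Step 3 (energy splitting).} Apply the Brezis--Lieb lemma to the gradient term and Lemma \ref{l3} to the critical Choquard term. Using $\mc{I}_\la(u_0)\ge0$ and Step 2, we obtain, as in \eqref{e36},
\[
c\ \ge\ \frac12\|v_n\|^2-\frac{1}{22^*_{\al,m}}\|v_n\|_{NL}^{22^*_{\al,m}}+o_n(1).
\]
Next use $\langle\mc{I}_\la'(u_n),u_n\rangle\to0$ and $\langle \mc{I}_\la'(u_0),u_0\rangle=0$, together with Step 2. As in \eqref{e37}, this yields
\[
\|v_n\|^2-\|v_n\|_{NL}^{22^*_{\al,m}}=o_n(1).
\]

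\emph{Step 4 (conclusion).} Pass to a subsequence so that both $\|v_n\|^2$ and $\|v_n\|_{NL}^{22^*_{\al,m}}$ converge to a common limit $\beta\ge0$. Then $c\ge\frac{N+2m-\al}{4N-2\al}\beta$. The definition \eqref{e4} of $S_{H,L}$ gives $\beta\ge S_{H,L}\beta^{\frac{N-2m}{2N-\al}}$. Hence either $\beta=0$ or $\beta\ge S_{H,L}^{\frac{2N-\al}{N+2m-\al}}$. The second alternative contradicts the assumed upper bound on $c$, so $\beta=0$, i.e.\ $u_n\to u_0$ in $\mb{H}_0^m(\Om)$.
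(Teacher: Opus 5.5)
Your proof is correct and follows essentially the same route as the paper. Both arguments take the weak limit $u_0$ from Lemma~\ref{l7}, show via the Hardy--Littlewood--Sobolev inequality that the subcritical nonlocal term of $v_n=u_n-u_0$ vanishes, and then repeat the $\beta$-dichotomy argument of Lemma~\ref{l4}. The one small difference is in how you obtain $\int_\Om\int_\Om |u_n(x)|^q|u_n(y)|^q|x-y|^{-\al}\,dx\,dy\to\int_\Om\int_\Om |u_0(x)|^q|u_0(y)|^q|x-y|^{-\al}\,dx\,dy$: you get it directly from the compact embedding, using $\frac{2Nq}{2N-\al}<2^*_m$, whereas the paper invokes the Brezis--Lieb type splitting of Lemma~\ref{l3} for this term. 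Your version is slightly more self-contained, since Lemma~\ref{l3} is stated only for the critical exponent.
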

\begin{proof}
As we have already obtained that $u_0$ is the weak limit of $\{u_n\}$ in Lemma \ref{l7}.\\ We define $v_n:=u_n-u_0$, then we can easily see that $v_n \weak 0$ in $H_0^m(\Om)$ and $v_n \rightarrow 0$ a.e. in $\Om$. Moreover, by Lemma \ref{l3}, we know
\[\DD\int_{\Om} \int_{\Om} \frac{|u_n(x)|^q |u_n(y)|^q}{|x-y|^\al} \, dx \, dy=\int_{\Om} \int_{\Om} \frac{\left|v_n(x)\right|^q\left|v_n(y)\right|^q}{|x-y|^\al} \, dx \, dy+\int_{\Om} \int_{\Om} \frac{|u_0(x)|^q\left|u_0(y)\right|^q}{|x-y|^\al} \, dx \, dy+o_n(1).\]
Similar to the proof of Lemma \ref{l4}, we have
\[
c \leftarrow \mc{I_\la}\left(u_n\right) \geq \frac{1}{2} \int_{\Om}\left|D^mv_n\right|^2 d x-\frac{1}{22_{\al, m}^*} \int_{\Om} \int_{\Om} \frac{\left|v_n(x)\right|^{2_{\al, m}^*}\left|v_n(y)\right|^{2_{\al, m}^*}}{|x-y|^\al} \, dx \, dy+o_n(1),
\]
since $\mc{I_\la}(u_0) \geq 0$ and
\[
\int_{\Om} \int_{\Om} \frac{\left|v_n(x)\right|^q\left|v_n(y)\right|^q}{|x-y|^\al} \, dx \, dy \leq C(N, \al)\left|v_n^q\right|_{\frac{2N}{2N-\al}}^2 \rightarrow 0,
\]
as $n \rightarrow+\infty$. Reiterating the same arguments in the proof of Lemma \ref{l4}, we have
\[\left\|u_n-u_0\right\| \rightarrow 0,\]
as $n \rightarrow+\infty$. Hence, we have the required existence of a convergent subsequence.
\end{proof}
\begin{lemma} \label{l9}
Let $1<q<2_{\al,m}^*$ and $u_{\eps}$ as defined in \eqref{e10}. If one of the following conditions holds:
\begin{enumerate}
\item $N>\frac{2m(q+1)-\al}{q-1}$ and $\la>0$,
\item $N \leq \frac{2m(q+1)-\al}{q-1}$ and $\la$ is sufficiently large,
then there exists $\eps$ such that 
\end{enumerate}
\[\sup _{t \geq 0} \mc{I_\la}\left(t u_{\eps}\right)<\frac{N+2m-\al}{4 N-2\al} S_{H, L}^{\frac{2N-\al}{N+2m-\al}}.\]
\end{lemma}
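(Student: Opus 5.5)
The plan is to follow the template of Lemma \ref{l5}; the new ingredient is a lower bound for the nonlocal subcritical term $\int_\Om\int_\Om \frac{|u_\eps(x)|^q|u_\eps(y)|^q}{|x-y|^\al}\,dx\,dy$ replacing \eqref{e13}.

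\textbf{Step 1: lower bound for the $q$-term.} Since $u_\eps=U_\eps$ on $B_r$, we have
\[
\int_\Om\int_\Om \frac{|u_\eps(x)|^q|u_\eps(y)|^q}{|x-y|^\al}\,dx\,dy\ \ge\ \int_{B_r}\int_{B_r}\frac{U_\eps(x)^qU_\eps(y)^q}{|x-y|^\al}\,dx\,dy .
\]
Substituting $x=\eps x'$, $y=\eps y'$ turns the right-hand side into $\eps^{2N-\al-q(N-2m)}$ times $\int_{B_{r/\eps}}\int_{B_{r/\eps}}(1+|x'|^2)^{-\frac{q(N-2m)}{2}}|x'-y'|^{-\al}(1+|y'|^2)^{-\frac{q(N-2m)}{2}}$. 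For $\eps<1$ this double integral is at least its value over $B_r\times B_r$, which is a positive constant independent of $\eps$. Hence the $q$-term is $\ge C\eps^{2N-\al-q(N-2m)}$. No integrability or convergence issue arises, because we only use a lower bound over a fixed ball.

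\textbf{Step 2: upper estimate of $\sup_t \mc I_\la(tu_\eps)$.} Using \eqref{e11} and \eqref{e12}, define $K(t)$ exactly as in Lemma \ref{l5}, with the $q$-term now $-\frac{\la t^{2q}}{2q}C\eps^{2N-\al-q(N-2m)}$. The maximizer $t_\eps$ satisfies $t_0<t_\eps<\mc T_\eps$ uniformly in small $\eps$: the upper bound holds because the extra term has a negative derivative, and the lower bound holds because near $t=0$ the quadratic term dominates ($2q>2$). Monotonicity of the pure two-term part on $[0,\mc T_\eps]$ then gives
\[
\sup_{t\ge0}\mc I_\la(tu_\eps)\le \frac{N+2m-\al}{4N-2\al}S_{H,L}^{\frac{2N-\al}{N+2m-\al}}+O\big(\eps^{\min\{N-2m,\,N-\frac{\al}{2}\}}\big)-C\la\eps^{2N-\al-q(N-2m)} .
\]

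\textbf{Step 3: case analysis.} The negative term wins as $\eps\to0$ iff $2N-\al-q(N-2m)<N-2m$, i.e. $N(q-1)>2m(q+1)-\al$, i.e. $N>\frac{2m(q+1)-\al}{q-1}$. Since $N-\frac{\al}{2}>N-2m$ when $\al<4m$, this comparison with $N-2m$ is the right one there. When $\al\ge 4m$ the minimum is $N-\frac{\al}{2}$, and I will need to check, or absorb into the hypothesis, that the condition still suffices; this bookkeeping of exponents is the main obstacle. Fixing such a small $\eps$ gives case (1). For case (2), fix $\eps$ and argue as in Case 2 of Lemma \ref{l5}. The critical-point equation for $t_\la$ shows $\la t_\la^{2q-2}\cdot(\text{$q$-term})\le\|u_\eps\|^2$, so $t_\la\to0$ as $\la\to\infty$. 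Consequently $\sup_t\mc I_\la(tu_\eps)\le \frac{t_\la^2}{2}\|u_\eps\|^2\to0$, which is below the threshold for $\la$ large.
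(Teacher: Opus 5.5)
Your plan is correct and is essentially the paper's proof: the same $K(t)$ template from Lemma~\ref{l5}, a lower bound of order $\eps^{2N-\al-q(N-2m)}$ for the nonlocal $q$-term, and the same $t_\la\to0$ argument for case (2). Your Step 1 (restrict to $B_r\times B_r$, rescale, enlarge back to a fixed ball) is cleaner than the paper's $\mathrm{B_1}-2\mathrm{B_2}-\mathrm{B_3}$ decomposition, which only adds an extra error $O\big(\eps^{\min\{\frac{2N-\al}{2},\,(N-2m)q\}}\big)$ that then has to be absorbed via \eqref{e20}.

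The check you left open for $\al\ge 4m$ is immediate. In that case $N-\frac{\al}{2}\le N-2m<q(N-2m)$ because $q>1$, so $2N-\al-q(N-2m)<N-\frac{\al}{2}$. Hence $N>\frac{2m(q+1)-\al}{q-1}$ suffices for every $\al\in(0,N)$, with no extra hypothesis.
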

\begin{proof}\begin{enumerate}
\item $N>\frac{2m(q+1)-\al}{q-1}$.\\
In this case, we will estimate the convolution part; we know
 \begin{align}\label{e14}
\int_{\Om} \int_{\Om} \frac{\left|u_{\eps}(x)\right|^q\left|u_{\eps}(y)\right|^q}{|x-y|^\al} \, dx \, dy \geq & \int_{B_r} \int_{B_r} \frac{\left|u_{\eps}(x)\right|^q\left|u_{\eps}(y)\right|^q}{|x-y|^\al} \, dx \, dy \notag\\
= & \int_{B_r} \int_{B_r} \frac{\left|U_{\eps}(x)\right|^q\left|U_{\eps}(y)\right|^q}{|x-y|^\al} \, dx \, dy \notag\\
= & \int_{\Om} \int_{\Om} \frac{\left|U_{\eps}(x)\right|^q\left|U_{\eps}(y)\right|^q}{|x-y|^\al} \, dx \, dy-2 \int_{\Om \backslash B_r}\int_{B_r} \frac{\left|U_{\eps}(x)\right|^q\left|U_{\eps}(y)\right|^q}{|x-y|^\al} \, dx \, dy \notag\\
& \quad-\int_{\Om \backslash B_r}\int_{\Om \backslash B_r} \frac{\left|U_{\eps}(x)\right|^q\left|U_{\eps}(y)\right|^q}{|x-y|^\al} \, dx \, dy \\
:= & \mathrm{B_1}-2 \mathrm{B_2}-\mathrm{B_3},\notag
\end{align}
where
\[\mathrm{B_1}:=\int_{\Om} \int_{\Om} \frac{\left|U_{\eps}(x)\right|^q\left|U_{\eps}(y)\right|^q}{|x-y|^\al} \, dx \, dy, \quad \mathrm{B_2}:=\int_{\Om \backslash B_r} \int_{B_r}\frac{\left|U_{\eps}(x)\right|^q\left|U_{\eps}(y)\right|^q}{|x-y|^\al}\, dx \, dy\]
and
\[\mathrm{B_3}:=\int_{\Om \backslash B_r}\int_{\Om \backslash B_r} \frac{\left|U_{\eps}(x)\right|^q\left|U_{\eps}(y)\right|^q}{|x-y|^\al} \, dx \, dy .\]
We are going to estimate $\mathrm{B_1}, \mathrm{B_2}$ and $\mathrm{B_3}$. By direct computation, we know, for $\eps<1$,

\begin{align*}
\mathrm{B_1} & =\eps^{-(N-2m) q}C_{N,m}^{\frac{(N-2m)2q}{4m}} \int_{\Om} \int_{\Om} \frac{1}{\left(1+\left|\frac{x}{\eps}\right|^2\right)^{\frac{(N-2m) q}{2}}|x-y|^\al\left(1+\left|\frac{y}{\eps}\right|^2\right)^{\frac{(N-2m) q}{2}}} \, dx \, dy \\
& \geq \eps^{-(N-2m) q}C_{N,m}^{\frac{(N-2m)2q}{4m}} \int_{B_r} \int_{B_r} \frac{1}{\left(1+\left|\frac{x}{\eps}\right|^2\right)^{\frac{(N-2m) q}{2}}|x-y|^\al\left(1+\left|\frac{y}{\eps}\right|^2\right)^{\frac{(N-2) q}{2}}} \, dx \, dy\\
& =\eps^{-(N-2m) q}C_{N,m}^{\frac{(N-2m)2q}{4m}}\eps^{2N-\al} \int_{B_\frac{r}{\eps}} \int_{B_\frac{r}{\eps}} \frac{1}{\left(1+|x|^2\right)^{\frac{(N-2m) q}{2}}|x-y|^\al\left(1+|y|^2\right)^{\frac{(N-2m) q}{2}}} \, dx \, dy \\
& = O\left(\eps^{2N-\al-(N-2m) q}\right),
\end{align*}
i.e
\begin{equation*}
\mathrm{B_1}\geq O\left(\eps^{2N-\al-(N-2m) q}\right).
\end{equation*}
\begin{align*}
\mathrm{B_2} &=\eps^{(N-2m) q}C_{N,m}^{\frac{(N-2m)2q}{4m}}
\int_{\Om \setminus B_r} \int_{B_r}
\frac{1}{\left(\eps^2+|x|^2\right)^{\frac{(N-2m)q}{2}}
|x-y|^\al
\left(\eps^2+|y|^2\right)^{\frac{(N-2m)q}{2}}}\, dx\, dy \notag\\
& \leq O\left(\eps^{(N-2m)q}\right)\left(\int_{\Om \setminus B_r} \frac{1}{\left(\eps^2+|x|^2\right)^{\frac{(N-2m)qN}{2N-\al}}} d x\right)^{\frac{2N-\al}{2N}}\left(\int_{B_r} \frac{1}{\left(\eps^2+|y|^2\right)^{\frac{
(N-2m)qN}{2N-\al}}} d y\right)^{\frac{2N-\al}{2N}}\notag \\
& = O\left(\eps^{(N-2m)q}\right)\left(\int_{\Om \backslash B_r} \frac{1}{\left(|x|^2\right)^{\frac{(N-2m)qN}{2N-\al}}} d x\right)^{\frac{2N-\al}{2N}}\left(\int_{B_r} \frac{1}{\left(\eps^2+|y|^2\right)^{\frac{
(N-2m)qN}{2N-\al}}} d y\right)^{\frac{2N-\al}{2N}} \notag \\
& \leq O\left(\eps^{(N-2m)q}\right)\left(\int_{B_r} \frac{1}{\left(\eps^2+|y|^2\right)^{\frac{
(N-2m)qN}{2N-\al}}} d y\right)^{\frac{2N-\al}{2N}} \notag\\
& = O\left(\eps^{\frac{2N-\al}{2}}\right)\left(\int_{0}^{\frac{r}{\eps}} \frac{t^{N-1}}{\left(1+|t|^2\right)^{\frac{
(N-2m)qN}{2N-\al}}} dt\right)^{\frac{2N-\al}{2N}}\notag\\
& \leq O\left(\eps^{\frac{2N-\al}{2}}\right)\left(\int_{0}^{+\infty} \frac{t^{N-1}}{\left(1+|t|^2\right)^{\frac{
(N-2m)qN}{2N-\al}}} dt\right)^{\frac{2N-\al}{2N}}.
\end{align*}
Since $N>\frac{2m(q+1)-\al}{q-1}>2m+\frac{4m-\al}{2(q-1)}$ if $\al<4m$ and $N>2m+\frac{4m-\al}{2(q-1)}$ if $\al \geq 4m$, we know $\frac{2(N-2m) q N}{2N-\al}>N$,\\ therefore
\begin{equation*}
\mathrm{B_2} \leq O\left(\eps^{\frac{2N-\al}{2}}\right).
\end{equation*}
\begin{align}\label{e18}
\mathrm{B_3} &= \eps^{(N-2m) q} C_{N,m}^{\frac{(N-2m)2q}{4m}}
\int_{\Om \setminus B_r} \int_{\Om \setminus B_r}
\frac{1}{\left(\eps^2 + |x|^2\right)^{\frac{(N-2m) q}{2}}|x-y|^\al\left(\eps^2 + |y|^2\right)^{\frac{(N-2m) q}{2}}}\, dx \, dy\notag \\
&\leq \eps^{(N-2m) q} C_{N,m}^{\frac{(N-2m)2q}{4m}}
\int_{\Om \setminus B_r} \int_{\Om \setminus B_r}
\frac{1}{|x|^{(N-2m) q}|x-y|^\al|y|^{(N-2m) q}}\, dx \, dy\notag \\
&\leq O\!\left( \eps^{(N-2m) q} \right).
\end{align}
It follows from \eqref{e14}- \eqref{e18} that

\begin{align}\label{e19}
\int_{\Om} \int_{\Om} \frac{\left|u_{\eps}(x)\right|^q\left|u_{\eps}(y)\right|^q}{|x-y|^\al} \, dx \, dy & \geq O\left(\eps^{2N-\al-(N-2m) q}\right)-O\left(\eps^{\frac{2N-\al}{2}}\right)-O\left(\eps^{(N-2m) q}\right)\notag \\
& =O\left(\eps^{2N-\al-(N-2m) q}\right)-O\left(\eps^{\min \left\{\frac{2N-\al}{2},   (N-2m) q\right\}}\right).
\end{align}
By \eqref{e11}, \eqref{e12} and \eqref{e19}, we have
\begin{align*}
\mc{I_\la}\left(t u_{\eps}\right)=&\frac{t^2}{2} \int_{\Om}\left|D^m u_{\eps}\right|^2 d x-\frac{t^{22_{\al, m}^*}}{22_{\al, m}^*} \int_{\Om} \int_{\Om} \frac{\left|u_{\eps}(x)\right|^{2_{\al, m}^*}\left|u_{\eps}(y)\right|^{2_{\al, m}^*}}{|x-y|^\al} \, dx \, dy-\frac{\la t^{2 q}}{2 q} \int_{\Om} \int_{\Om} \frac{\left|u_{\eps}(x)\right|^q\left|u_{\eps}(y)\right|^q}{|x-y|^\al} \, dx \, dy\\
\leq&  \frac{t^2}{2}\left(C(N, \al)^{\frac{(N-2m)}{(2N-\al)}\frac{N}{2m}} S_{H, L}^{\frac{N}{2m}}+O\left(\eps^{N-2m}\right)\right)-\frac{t^{22_{\al, m}^*}}{22_{\al, m}^*}\left(C(N, \al)^{\frac{N}{2m}} S_{H, L}^{\frac{2N-\al}{2m}}-O\left(\eps^{N-\frac{\al}{2}}\right)\right) \\
\qquad \, & -\frac{t^{2q}}{2q}\left(O\left(\eps^{2N-\al-(N-2m)q}\right)-O\left(\eps^{\min \left\{\frac{2N-\al}{2}, (N-2m)q\right\}}\right) \right)\\
&:=  K(t).
\end{align*}
It is clear that $K(t) \rightarrow-\infty$ as $t \rightarrow+\infty$, and for a very small neighbourhood of zero, $K(t)$ becomes positive. It gives the existence of $t_{\eps}>0$ such that $\displaystyle\sup_{t>0} K(t)$ is attained at $t_{\eps}$.\\
Therefore $\DD\frac{d}{dt} K(t_\eps) =0$,
\begin{align*}
t_{\eps}\left(C(N, \al)^{\frac{(N-2m)}{(2N-\al)} \frac{N}{2m}} S_{H, L}^{\frac{N}{2m}}+O\left(\eps^{N-2m}\right)\right) & -t_{\eps}^{22_{\al, m}^*-1}\left(C(N, \al)^{\frac{N}{2m}} S_{H, L}^{\frac{2N-\al}{2m}}-O\left(\eps^{N-\frac{\al}{2}}\right)\right) \\
& -t^{2 q-1}\left(O\left(\eps^{2N-\al-(N-2m) q}\right)-O\left(\eps^{\min \left\{\frac{2N-\al}{2}, (N-2m)q\right\}}\right)\right)=0,
\end{align*}
since $\left(N>\frac{2m(q+1)-\al}{q-1}\right)$ and $N \geq 2m+1$, we know
\begin{equation}\label{e20}
2N-\al-(N-2m)q<\min \left\{\frac{2N-\al}{2}, (N-2m) q\right\},
\end{equation}
which implies that,
\[
O\left(\eps^{2N -\al-(N-2m) q}\right)-O\left(\eps^{\min \left\{\frac{2N-\al}{2}, (N-2m) q\right\}}\right) \geq 0,
\]
if $\eps$ is small enough. And so
\[
t_{\eps}<\left(\frac{C(N, \al)^{\frac{(N-2m)}{(2N-\al)}\frac{N}{2m}} S_{H, L}^{\frac{N}{2m}}+O\left(\eps^{N-2m}\right)}{C(N, \al)^{\frac{N}{2m}} S_{H, L}^{\frac{2N-\al}{2m}}-O\left(\eps^{N-\frac{\al}{2}}\right)}\right)^{\frac{1}{22_{\al, m}^*-2}}:=\mc{T}_{\eps},\]
and there exists $t_0>0$ such that for $\eps>0$ small enough
\[t_{\eps}>t_0.\]
Notice that the function
\[
t \mapsto \frac{t^2}{2}\left(C(N, \al)^{\frac{(N-2m)}{(2N-\al)}\frac{N}{2m}} S_{H, L}^{\frac{N}{2m}}+O\left(\eps^{N-2m}\right)\right)-\frac{t^{22_{\al, m}^*}}{22_{\al, m}^*}\left(C(N, \al)^{\frac{N}{2m}} S_{H, L}^{\frac{2N-\al}{2m}}-O\left(\eps^{N-\frac{\al}{2}}\right)\right)
\]
is increasing on $\left[0, \mc{T}_\eps\right]$, Using $t_0<t_{\eps}<\mc{T}_\eps$ and \eqref{e19}, we have
\begin{align*}
\max_{t \geq 0} \mc{I}_\la(t u_{\eps}) 
\leq & \frac{N+2m-\al}{4N-2\al}
\left(\frac{C(N, \al)^{\frac{N(N-2m)}{2m(2N-\al)}} \, S_{H, L}^{\frac{N}{2m}} 
+ O\left(\eps^{N-2m}\right)}{\left(C(N, \al)^{\frac{N}{2m}} S_{H, L}^{\frac{2N-\al}{2m}} 
- O\left(\eps^{N-\frac{\al}{2}}\right)
\right)^{\frac{N-2m}{2N-\al}}}\right)^{\frac{2N-\al}{N+2m-\al}}- O\left(\eps^{2N-\al-(N-2m) q}\right)\\
&\qquad \, \, + O\left(\eps^{\min \left\{ \frac{2N-\al}{2}, (N-2m) q \right\}}\right)
\\
\leq & \frac{N+2m-\al}{4N-2\al} \,S_{H, L}^{\frac{2N-\al}{N+2m-\al}}
+ O\left(\eps^{\min \left\{\frac{2N-\al}{2}, N-2m\right\}}\right)
- O\left(\eps^{2N-\al-(N-2m) q}\right)
+ O\left(\eps^{\min \left\{\frac{2N-\al}{2}, (N-2m) q\right\}}\right).
\end{align*}
The assumptions $N>\frac{2m(q+1)-\al}{q-1}$ and $1<q<2_{\al, m}^*$ together with \eqref{e20} imply that
\[
\max _{t \geq 0} \mc{I_\la}\left(t u_{\eps}\right)<\frac{N+2m-\al}{4 N-2 \al} S_{H, L}^{\frac{2N-\al}{N+2m-\al}},
\]
if $\eps$ is small enough.
\item $N \leq \frac{2m(q+1)-\al}{q-1}$. For any fixed $\eps$ in \eqref{e10}, assume that $\displaystyle\max_{t \geq 0} \mc{I_\la}(t u_{\eps})$ is obtained at some $t_\la>0$. Reiterating the same arguments in the proof of Lemma \ref{l5}, we know $t_\la \rightarrow 0$ as $\la \rightarrow+\infty$ and $\displaystyle\max _{t \geq 0} \mc{I_\la}\left(t u_{\eps}\right) \rightarrow 0$, as $\la \rightarrow+\infty$, which gives us the conclusion for the case $N \leq \frac{2m(q+1)-\al}{q-1}$.
\end{enumerate}
This completes the proof.
\end{proof}
\noi{\bf{Proof of Theorem \ref{t2}}}
By Lemma \ref{l6}, Lemma \ref{l9} and the Mountain Pass Theorem without $(PS)$ condition (\cite{4willem1996minimax}), there exists a $(PS)_c$ sequence $\{u_n\}$ of $\mc{I_\la}$ with $c<\frac{N+2m-\al}{4 N-2\al} S_{H, L}^{\frac{2N-\al}{N+2m-\al}}$ if one of the following conditions holds:
\begin{enumerate}   
\item $N>\frac{2m(q+1)-\al}{q-1}$ and $\la>0$,
\item $N \leq \frac{2m(q+1)-\al}{q-1}$ and $\la$ is sufficiently large. 
\end{enumerate} 
Applying Lemma \ref{l8}, we know that $\{u_n\}$ contains a convergent subsequence. So we have $\mc{I_\la}$ has a critical value $c \in\left(0, \frac{N+2m-\al}{4 N-2\al} S_{H, L}^{\frac{2N-\al}{N+2m-\al}}\right)$ and problem \eqref{e6} has a nontrivial solution.\qed\\
{\bf{ Proof of Theorem \ref{t3}:}}  Proof of Theorem \ref{t3} is similar to that of Theorem \ref{t2} and the main difference is that the $(PS)_c$ condition holds below the critical level $\frac{m}{N} S^{\frac{N}{2m}}$. From Lemma 4.3 of \cite{22shang2014multiple}, we know that
\[\int_{\Om}\left|D^m u_{\eps}\right|^2 d x=S^{\frac{N}{2m}}+O\left(\eps^{N-2m}\right)\]
and
\[\int_{\Om}\left|u_{\eps}\right|^{2^*_m} d x=S^{\frac{N}{2m}}+O\left(\eps^N\right).\]
For  part $(1)$ of Lemma \ref{l9}, we have

\begin{align*}
\max _{t \geq 0} \mc{I_\la}\left(t u_{\eps}\right) & \leq \frac{m}{N}\left(\frac{S^{\frac{N}{2m}}+O\left(\eps^{N-2m}\right)}{\left(S^{\frac{N}{2m}}+O\left(\eps^N\right)\right)^{\frac{2}{2_m^*}}}\right)^{\frac{N}{2m}}-O\left(\eps^{2N-\al-(N-2m) q}\right)+O\left(\eps^{\min \left\{\frac{2N-\al}{2m}, (N-2m) q\right\}}\right) \\
& <\frac{m}{N} S^{\frac{N}{2m}}+O\left(\eps^{N-2m}\right)-O\left(\eps^{2N-\al-(N-2m) q}\right)+O\left(\eps^{\min \left\{\frac{2N-\al}{2m}, (N-2m)q\right\}}\right) \\
& <\frac{m}{N} S^{\frac{N}{2m}},
\end{align*}
thanks to $N>\frac{2m(q+1)-\al}{q-1}$.\\  Now the rest of the proof follows same as in Theorem \ref{t2}.\qed\\

\noi{\bf Acknowledgment:} 
The second author would like to thank the Anusandhan National Research Foundation,  Government of India, for the financial support under the grant ANRF/ARGM/2025/001486/MTR. 
\bibliographystyle{plain}
\bibliography{references.bib}

@article{1gao2017nonlocal,
  title={On nonlocal {C}hoquard equations with {H}ardy--{L}ittlewood--{S}obolev critical exponents},
  author={Gao, F. and Yang, M.},
  journal={Journal of Mathematical Analysis and Applications},
  volume={448},
  number={2},
  pages={1006--1041},
  year={2017},
  publisher={Elsevier}
}

@article{2ackermann2004periodic,
  title={On a Periodic {S}chr{\"o}dinger Equation with Nonlocal Superlinear part},
  author={Ackermann, N.},
  journal={Mathematische Zeitschrift},
  volume={248},
  number={2},
  pages={423--443},
  year={2004},
  publisher={Springer}
}

@article{3brezis1983relation,
  title={A Relation between Pointwise Convergence of Functions and Convergence of Functionals},
  author={Br{\'e}zis, H. and Lieb, E.},
  journal={Proceedings of the American Mathematical Society},
  volume={88},
  number={3},
  pages={486--490},
  year={1983}
}

@article{4willem1996minimax,
  title={Minimax theorems. {P}rogress Nonlinear Differential Equations Appl. 24},
  author={Willem, M.},
  journal={Birkh{\"a}user Boston, Inc., Boston, MA},
  volume={2},
  pages={5},
  year={1996}
}

@book{6rabinowitz1986minimax,
  author={Rabinowitz, P. H.},
  title={Minimax methods in critical point theory with applications to differential equations},
  number={65},
  year={1986},
  publisher={American Mathematical Soc.}
}

@article{1310.57262/ade/1366030750,
author = {O. Lakkis},
title = {{Existence of solutions for a class of semilinear polyharmonic equations with critical exponential growth}},
volume = {4},
journal = {Advances in Differential Equations},
number = {6},
publisher = {Khayyam Publishing, Inc.},
pages = {877 -- 906},
year = {1999},
doi = {10.57262/ade/1366030750},
URL = {https://doi.org/10.57262/ade/1366030750}
}

@article{14article,
author = {Lam, N. and Lu, G.},
title = {Existence of nontrivial solutions to polyharmonic equations with subcritical and critical exponential growth},
volume = {32},
journal = {Discrete and Continuous Dynamical Systems},
  volume={32},
  number={6},
  pages={2187--2205},
  year={2012},
doi = {10.3934/dcds.2012.32.2187}
}

@article{22shang2014multiple,
  title={Multiple nontrivial solutions for a class of semilinear Polyharmonic equations},
  author={Shang, Y. and Li, W.},
  journal={Acta Mathematica Scientia},
  volume={34},
  number={5},
  pages={1495--1509},
  year={2014},
  publisher={Elsevier}
}

@article{23ferrero2009solutions,
  title={On solutions of second and fourth order elliptic equations with power-type nonlinearities},
  author={Ferrero, A. and Warnault, G.},
  journal={Nonlinear Analysis: Theory, Methods and Applications},
  volume={70},
  number={8},
  pages={2889--2902},
  year={2009},
  publisher={Elsevier}
}

@article{24myers1998thin,
  title={Thin films with high surface tension},
  author={Myers, T. G.},
  journal={SIAM review},
  volume={40},
  number={3},
  pages={441--462},
  year={1998},
  publisher={SIAM}
}

@article{25grunau1995positive,
  title={Positive solutions to semilinear polyharmonic {D}irichlet problems involving critical {S}obolev exponents},
  author={Grunau, H. C.},
  journal={Calculus of Variations and Partial Differential Equations},
  volume={3},
  number={2},
  pages={243--252},
  year={1995},
  publisher={Springer}
}

@article{26swanson1992best,
  title={The best {S}obolev constant},
  author={Swanson, C. A. },
  journal={Applicable Analysis},
  volume={47},
  number={1-4},
  pages={227--239},
  year={1992},
  publisher={Taylor \& Francis}
}

@article{28riesz1949integrale,
title={L'int{\'e}grale de Riemann-Liouville et le probl{\`e}me de Cauchy},
  author={Riesz, M.},
  journal={Acta. Math},
  volume={81},
  pages={10--16},
  year={1949}
}

@article{29edmunds1990critical,
  title={Critical exponents, critical dimensions and the biharmonic operator},
  author={Edmunds, D. E. and Fortunato, D. and Jannelli, E.},
  journal={Archive for Rational Mechanics and Analysis},
  volume={112},
  number={3},
  pages={269--289},
  year={1990},
  publisher={Springer-Verlag Berlin/Heidelberg}
}

@article{31colasuonno2012multiple,
  title={Multiple solutions for an eigenvalue problem involving $p$-{L}aplacian type operators},
  author={Colasuonno, F. and Pucci, P. and Varga, C.},
  journal={Nonlinear Analysis: Theory, Methods and Applications},
  volume={75},
  number={12},
  pages={4496--4512},
  year={2012},
  publisher={Elsevier}
}

@article{32pucci1990critical,
  title={Critical exponents and critical dimensions for polyharmonic operators},
  author={Pucci, P. and Serrin, J.},
  journal={Journal de Math{\'e}matiques Pures et Appliqu{\'e}es},
  volume={69},
  number={1},
  pages={55--83},
  year={1990},
  publisher={-PARIS, FRANCE: EDITIONS ELSEVIER-Paris FRANCE: Bachelier 1836--PARIS FRANCE~…}
}

@article{33gazzola1998critical,
  title={Critical growth problems for polyharmonic operators},
  author={Gazzola, F.},
  journal={Proceedings of the Royal Society of Edinburgh Section A: Mathematics},
  volume={128},
  number={2},
  pages={251--263},
  year={1998},
  publisher={Royal Society of Edinburgh Scotland Foundation}
}

@article{35grunau1996conjecture,
  title={On a conjecture of {P}. {P}ucci and {J}. {S}errin},
  author={Grunau, H. C.},
  journal={Analysis},
  volume={16},
  number = {4},
  pages={399--403},
  year={1996}
}

@article{36pucci1986general,
  title={A general variational identity},
  author={Pucci, P. and Serrin, J.},
  journal={Indiana University mathematics journal},
  volume={35},
  number={3},
  pages={681--703},
  year={1986},
  publisher={JSTOR}
}

@article{37article,
author = {Gao, F. and Yang, M.},
title = {On the {B}r{\'e}zis-{N}irenberg type critical problem for nonlinear {C}hoquard equation},
journal = {Science China Mathematics},
volume = {61},
number={7},
pages = {1219-1242},
year = {2018},
doi = {10.1007/s11425-016-9067-5}
}

@article{38chen2025new,
  title={New type of solutions for the critical polyharmonic equation},
  author={Chen, W. and Wang, Z.},
  journal={Journal of Differential Equations},
  volume={429},
  pages={678--715},
  year={2025},
  publisher={Elsevier}
}

@article{39ge2011critical,
  title={A critical elliptic problem for polyharmonic operators},
  author={Ge, Y. and Wei, J. and Zhou, F.},
  journal={Journal of Functional Analysis},
  volume={260},
  number={8},
  pages={2247--2282},
  year={2011},
  publisher={Elsevier}
}

@article{40dou2012solutions,
  title={Solutions for polyharmonic elliptic problems with critical nonlinear in symmetric domains.},
  author={Dou, J. and Guo, Q. and Wang, Z. Q.},
  journal={Communications on Pure and Applied Analysis},
  volume={11},
  number={2},
  pages = {453-464},
  year={2012}
}

@article{41lei2025multiple,
  title={Multiple solutions for a bibarmonic elliptic equation with critical {C}hoquard type nonlinearity},
  author={Lei, C. and Lei, J. and Suo, H.},
  journal={Quaestiones Mathematicae},
  volume={48},
  number={1},
  pages={121--141},
  year={2025},
  publisher={Taylor \& Francis}
}

@article{42ji2026normalized,
  title={Normalized solutions for the nonlinear biharmonic {C}hoquard Equation with {H}ardy--{L}ittlewood--{S}obolev Upper Critical Exponent},
  author={Ji, X. and Yang, Y.},
  journal={Taiwanese Journal of Mathematics},
  volume={30},
  number={1},
  pages={83--119},
  year={2026},
  publisher={Mathematical Society of the Republic of China}
}

@article{43rani2022multiple,
author = {Anu Rani and Sarika Goyal},
title={{Multiple solutions for biharmonic critical Choquard equation involving sign-changing weight functions}},
journal={Topological Methods in Nonlinear Analysis},
volume = {59},
number = {1},
pages = {221 -- 260},
year={2022}
}

@article{44Du2019UniquenessAN,
  title={Uniqueness and nondegeneracy of solutions for a critical nonlocal equation},
  author={Du, L. and Yang, M.},
  journal={Discrete and Continuous Dynamical Systems},
  year={2019},
  volume={39},
  number={10},
  pages={5847--5866},
  doi={10.3934/dcds.2019219}
}

@article{45article,
author = {Gao, F. and Shen, Z. and Yang, M.},
year = {2017},
month = {03},
pages = {26},
title = {On the critical Choquard equation with potential well},
volume = {38},
journal = {Discrete and Continuous Dynamical Systems - A},
doi = {10.3934/dcds.2018151}
}

@article{46battaglia2017existence,
  title={Existence of groundstates for a class of nonlinear {C}hoquard equations in the plane},
  author={Battaglia, L. and Van Schaftingen, J.},
  journal={Advanced Nonlinear Studies},
  volume={17},
  number={3},
  pages={581--594},
  year={2017},
  publisher={De Gruyter}
}

@article{47gao2020existence,
  title={Existence of solutions for critical {C}hoquard equations via the concentration-compactness method},
  author={Gao, F. and da Silva, E. D. and Yang, M. and Zhou, J.},
  journal={Proceedings of the Royal Society of Edinburgh Section A: Mathematics},
  volume={150},
  number={2},
  pages={921--954},
  year={2020},
  publisher={Royal Society of Edinburgh Scotland Foundation}
}

@article{48gao2020existence,
  title={Existence of multiple semiclassical solutions for a critical {C}hoquard equation with indefinite potential},
  author={Gao, F. and Yang, M. and Zhou, J.},
  journal={Nonlinear Analysis},
  volume={195},
  pages={111817},
  year={2020},
  publisher={Elsevier}
}

@article{49chen2026multiple,
  title={Multiple blowing-up solutions for the {C}hoquard type {B}r{\'e}zis-{N}irenberg problem in dimension three},
  author={Chen, W. and Wang, Z.},
  journal={Calculus of Variations and Partial Differential Equations},
  volume={65},
  number={3},
  pages={71},
  year={2026},
  publisher={Springer}
}

@article{51liu2024existence,
  title={Existence of solutions for nonlinear biharmonic {C}hoquard equations on weighted lattice graphs},
  author={Liu, Y. and Zhang, M.},
  journal={Journal of Mathematical Analysis and Applications},
  volume={534},
  number={2},
  pages={128079},
  year={2024},
  publisher={Elsevier}
}
\end{document}